\def\acts{\ \rotatebox[origin=c]{-90}{$\circlearrowright$}\ }
\def\racts{\ \rotatebox[origin=c]{90}{$\circlearrowleft$}\ }
\newtheorem{thm}{Theorem}[section]
\newtheorem{lem}[thm]{Lemma}
\newtheorem{conj}[thm]{Conjecture}
\newtheorem{claim}[thm]{Claim}
\newtheorem{prop}[thm]{Proposition}
\newtheorem{cor}[thm]{Corollary}
\theoremstyle{definition}
\newtheorem{defn}[thm]{Definition}
\newtheorem{rmk}[thm]{Remark}
\newtheorem{ex}[thm]{Example}
\newtheorem*{ack}{Acknowledgements}
\numberwithin{equation}{section}
\def\C{{\mathbb C}}
\def\Q{{\mathbb Q}}
\def\R{{\mathbb R}}
\def\Z{{\mathbb Z}}
\def\P{{\mathbb P}}
\def\QQ{\overline{\mathbb Q}}
\DeclareMathOperator{\pr}{pr}
\DeclareMathOperator{\Pic}{Pic}
\DeclareMathOperator{\id}{id}
\DeclareMathOperator{\Aut}{Aut}
\DeclareMathOperator{\End}{End}
\DeclareMathOperator{\Sym}{Sym}
\DeclareMathOperator{\Ker}{Ker}
\DeclareMathOperator{\Nef}{Nef}
\DeclareMathOperator{\Int}{Int}
\DeclareMathOperator{\GL}{GL}
\DeclareMathOperator{\Eff}{Eff}
\DeclareMathOperator{\ord}{ord}
\newcommand{\relmiddle}[1]{\mathrel{}\middle#1\mathrel{}}
\newenvironment{claimproof}[0]
  {%
   \paragraph{\it Proof.}%
  }
  {%
    \hfill$\blacksquare$%
  }
\newenvironment{notation}[0]{%
  \begin{list}%
    {}%
    {\setlength{\itemindent}{0pt}
     \setlength{\labelwidth}{4\parindent}
     \setlength{\labelsep}{\parindent}
     \setlength{\leftmargin}{5\parindent}
     \setlength{\itemsep}{0pt}
     }%
   }%
  {\end{list}}
\title[]
{Kawaguchi-Silverman conjecture for endomorphisms on several classes of varieties}
\author{Yohsuke Matsuzawa}
\address{Department of Mathematics, box 1917, Brown University, Providence, Rhode Island 02912, USA}
\email{matsuzawa@math.brown.edu}
\begin{document}

\begin{abstract}
We prove Kawaguchi-Silverman conjecture (KSC) and Shibata's conjecture on ample canonical heights for endomorphisms on several classes of algebraic varieties
including varieties of Fano type and projective toric varieties.
We also prove KSC for group endomorphisms of linear algebraic groups.
We also propose a possible approach to the conjecture using equivariant MMP.
\end{abstract}

\maketitle

\setcounter{tocdepth}{1}
\tableofcontents

\section{Introduction}

Let $X$ be a smooth projective variety and $f \colon X \dashrightarrow X$ a dominant rational self-map,
both defined over $ \overline{\mathbb Q}$.
Silverman introduced the notion of arithmetic degree in \cite{sil},
which measures the arithmetic complexity of $f$-orbits by means of Weil height functions.

On  the other hand, we can attach the dynamical degree $\delta_{f}$ to $f$, which  measures the geometric complexity of
the dynamical system.
In \cite{sil}, \cite[Conjecture 6]{ks3} Kawaguchi and Silverman conjectured that the arithmetic degree of any Zariski dense orbits
are equal to the first dynamical degree $\delta_{f}$ (cf.  Conjecture \ref{ksc}). 

In \cite{shi}, Shibata defines the so called ample canonical height function, which is a variant of dynamical canonical height,
and proposes a conjecture on the vanishing loci of ample canonical heights.
This conjecture implies Kawaguchi-Silverman conjecture for endomorphism of (smooth) projective varieties.

In this paper, we prove the Kawaguchi-Silverman conjecture and Shibata's conjecture for endomorphisms on several classes of algebraic varieties as well as
propose a possible approach to the conjecture using equivariant MMP.
The contents of this paper are as follows.

In \S\ref{sec:def}, we recall the definition of arithmetic degrees and introduce Kawaguchi-Silverman conjecture and Shibata's conjecture.

In \S\ref{sec:pre}, we collect preliminary facts with proofs.

\S\ref{sec:shi} proves Shibata's conjecture (therefore Kawaguchi-Silverman conjecture) for endomorphisms of normal projective varieties $X$ 
such that the nef cone is generated by finitely many semi-ample divisors and $\Pic X_{\Q} =N^{1}(X)_{\Q}$.
This class of varieties includes,  by definition, all Mori dream spaces.
In particular, the conjecture holds for endomorphisms of varieties of Fano type and projective toric varieties.

In our earlier paper \cite{mss}, we prove Kawaguchi-Silverman conjecture for non-isomorphic endomorphisms of smooth projective surfaces.
One of the key ingredients in that proof is that minimal model program (MMP) works equivariantly with respect to non-isomorphic endomorphisms.
We need not just non-isomorphic but {\it int-amplified} to do the same in higher dimension.
In \S\ref{sec:equiMMP}, we prove Kawaguchi-Silverman conjecture for some special classes of endomorphisms by using 
equivariant MMP established by Meng and Zhang.
We also propose a strategy to prove the conjecture for endomorphisms on varieties admitting int-amplified endomorphism by induction on dimension. 
We prove the following purely geometric fact, which might be a key to proceed the induction:
Let $X$ be a $\Q$-factorial klt rationally connected variety admitting an int-amplified endomorphism.
Then, every nef Cartier divisor on $X$ is $\Q$-linearly equivalent to an effective divisor.

In \S\ref{sec:grp}, we prove (part of) Kawaguchi-Silverman conjecture for group endomorphisms of linear algebraic groups.
As it turns out in the proof, the essential case is the torus case, which is proved by Silverman in \cite{sil}.

\vspace{15pt}
{\bf Notation.}
\begin{itemize}
\item Throughout this paper, the ground field is $\QQ$ unless otherwise stated.
All statements that are purely geometric hold over any algebraically closed field of characteristic zero.
\item A variety over a field $k$ is a geometrically integral separated scheme of finite type over $k$.
A divisor on a variety means a Cartier divisor.
\item Let $X$ be a projective variety over an algebraically closed field of characteristic zero.
\begin{itemize}
\item $N^{1}(X)$ is the group of Cartier divisors modulo numerical equivalence
(a Cartier divisor $D$ is numerically equivalent to zero, which is denoted by $D\equiv0$, if $(D\cdot C)=0$ for all irreducible curves $C$ on $X$).
\item $N_{1}(X)$ is the group of $1$-cycles modulo numerical equivalence
(a $1$-cycle $ \alpha$ is numerically zero if $(D\cdot \alpha)=0$ for all Cartier divisors $D$).
By definition, $N^{1}(X)$ and $N_{1}(X)$ are dual to each other.
\item When $X$ is normal, the Iitaka dimension of a $\Q$-Cartier divisor $D$ on $X$ is denoted by $\kappa(D)$. 
\end{itemize}
\item Let $M$ be a $\Z$-module. We write $M_{\Q}=M {\otimes}_{\Z}\Q$, $M_{\R}=M {\otimes}_{\Z}\R$, and so on.
\end{itemize}

\begin{ack}
The author would like to thank Tomohide Terasoma for attending his seminar and also supporting his study.
The author would like to thank Takeru Fukuoka for a lot of insightful conversations.
He also would like to thank Kaoru Sano and Takahiro Shibata for discussing with me this topic.
The author would like to thank Sho Ejiri, Kenta Hashizume, Akihiro Kanemitsu, Masaru Nagaoka, Sho Tanimoto, and Shou Yoshikawa
for answering his questions.
Part of this work was done during my stay at Brown University and the University of Michigan.
The author would like to thank Joe Silverman and Mattias Jonsson for their hospitality. 
The author is supported by JSPS Research Fellowship for Young Scientists and KAKENHI Grant Number 18J11260
and the Program for Leading Graduate Schools, MEXT, Japan.
\end{ack}

\section{Arithmetic degree and some Conjectures}\label{sec:def}

\subsection{Dynamical degrees}

Let $X$ be a smooth projective variety defined over an algebraically closed field of characteristic zero
and $f \colon X \dashrightarrow X$ a  dominant rational map.
We define pull-back $f^{*} \colon N^{1}(X) \longrightarrow N^{1}(X)$ as follows.
Take a resolution of indeterminacy $\pi \colon X' \longrightarrow X$ of $f$ with $X'$ smooth projective.
Write $f'=f \circ \pi $.
Then we define $f^{*}=\pi_{*}\circ {f'}^{*}$.
This is independent of the choice of resolution.

\begin{defn}

Fix a norm $||\ ||$ on the finite dimensional real vector space $\End(N^{1}(X)_{\R})$.
Then the (first) dynamical degree of $f$ is 
\[
\delta_{f}=\lim_{n \to \infty}||(f^{n})||^{1/n}.
\]
This is independent of the choice of $||\ ||$.
We refer to \cite{dang, df, tru0}, \cite[\S 4]{ds} for basic properties of dynamical degrees.
\end{defn}

\begin{rmk}
The dynamical degree has another equivalent definition in terms of intersection numbers:
\[
\delta_{f}=\lim_{n \to \infty} ((f^{n})^{*}H\cdot H^{\dim X-1})^{1/n}
\]
where $H$ is any nef and big Cartier divisor on $X$ (cf. \cite{dang}).
\end{rmk}

\begin{rmk}\label{rmk on dyn deg}\ 
Dynamical degree is a birational invariant. That is, if $\pi \colon X \dashrightarrow X'$ is a birational map
and $f \colon X \dashrightarrow X$ and $f' \colon X' \dashrightarrow X'$ are conjugate by $\pi$, then $\delta_{f}=\delta_{f'}$.
In particular, we can define the dynamical degree of a dominant rational self-map of quasi-projective varieties by taking
a smooth projective model.
\end{rmk}

\subsection{Arithmetic degrees}

In this subsection, the ground field is $ \overline{\mathbb Q}$.
We briefly recall the definition of Weil height function.
Standard references for Weil height functions are \cite{bg,hs,Lan}, for example.

The height function on a projective space $\P^{N}(\QQ)$ is
\begin{align*}
\P^{N}(\QQ) \longrightarrow \R\ ;\  (x_{0}:\cdots:x_{N}) \mapsto \frac{1}{[K : \Q]}\sum_{v} \log \max\{|x_{0}|_{v}, \dots , |x_{N}|_{v}\}
\end{align*}
where $K$ is a number field (finite extension of $\Q$ contained in the fixed algebraic closure $\QQ$) containing the coordinates $x_{0},\dots,x_{N}$,
the sum runs over all places $v$ of $K$, and $|\ |_{v}$ is the absolute value associated with $v$ normalized as follows:
\begin{align*}
|x|_{v}= 
\begin{cases}
\#\ ( \mathcal{O}_{K}/  \mathfrak{p}_{v})^{-\ord_{v}(x)} 
\ \text{if $v$ is non-archimedean}\\
| \sigma_{v}(x)|^{[K_{v}:\R]}\ \text{if $v$ archimedean.}
\end{cases}
\end{align*}
Here $ \mathcal{O}_{K}$ is the ring of integers of $K$.
When $v$ is non-archimedean, $ \mathfrak{p}_{v}$ is the maximal ideal corresponding to $v$ and $\ord_{v}$ is the valuation associated with $v$.
When $v$ is archimedean, $ \sigma_{v}$ is the embedding of $K$ into $\C$ corresponding to $v$.
This definition is independent of the choice of homogeneous coordinates and the number field $K$.

Let $X$ be a projective variety over $\QQ$.
A Cartier $\R$-divisor  $D$ on $X$ determines a (logarithmic) Weil height function $h_{D}$ up to bounded functions as follows.
When $D$ is a very ample integral divisor, $h_{D}$ is the composite of the embedding by $|D|$ and the height on the projective space we have just defined.
For a general $D$, we write 

\begin{align}
\label{very ample sum}
D=\sum_{i=1}^{m}a_{i}H_{i} 
\end{align}
where $a_{i}$ are real numbers and $H_{i}$ are very ample divisors.
Then we define
\[
h_{D}=\sum_{i=1}^{m}a_{i}h_{H_{i}}.
\]
The function $h_{D}$ does not depend on the choice of the representation (\ref{very ample sum})
up to bounded function.
We call any representative of the class $h_{D}$ modulo bounded functions a height function
associated with $D$.

\begin{defn}
Let $X$ be a normal projective variety or smooth quasi-projective variety defined over $\QQ$.
Let $f \colon X \longrightarrow X$ be a surjective self-morphism. 

\begin{itemize}
\item When $X$ is projective, let $H$ be an ample divisor on $X$. Fix a Weil height function $h_{H}$ associated with $H$.
\item When $X$ is a smooth quasi-projective variety, fix a smooth projective variety $ \overline{X}$ and an open embedding $X \subset \overline{X}$.
Let $H$ be an ample divisor on $ \overline{X}$ and take a Weil height function $h_{H}$ associated with $H$.
\end{itemize}

The arithmetic degree $ \alpha_{f}(x)$ of $f$ at $x \in X(\QQ)$ is defined by
\[
\alpha_{f}(x)=\lim_{n \to \infty}\max\{1, h_{H}(f^{n}(x))\}^{1/n}
\]
if the limit exists.
Since the convergence of this limit is not proved in general, we introduce the following:
\begin{align*}
&\overline{\alpha}_{f}(x)=\limsup_{n \to \infty}\max\{1, h_{H}(f^{n}(x))\}^{1/n},\\
& \underline{\alpha}_{f}(x)=\liminf_{n \to \infty}\max\{1, h_{H}(f^{n}(x))\}^{1/n}.
\end{align*}
We call $ \overline{\alpha}_{f}(x)$ the upper arithmetic degree and $ \underline{\alpha}_{f}(x)$ the lower arithmetic degree.
The definitions of the (upper, lower) arithmetic degrees are independent of the choice of $H$ and $h_{H}$ when $X$ is normal projective and
also independent of $\overline{X}$ when $X$ is smooth quasi-projective (\cite[Proposition 12]{ks3} \cite[Theorem 3.4]{mss}). 

\end{defn}
 
 %\begin{rmk}
 %By definition, $1 \leq \underline{\alpha}_{f}(x)\leq \overline{\alpha}_{f}(x)$. When $x$ is $f$-preperiodic, $ \alpha_{f}(x)=1$.
 %\end{rmk}

In \cite{ks3}, Kawaguchi and Silverman formulated the following conjecture.

\begin{conj}[KSC]\label{ksc}
Let $X$ be a normal projective variety or  smooth quasi-projective variety and $f \colon X \longrightarrow X$ a surjective morphism,
both defined over $ \overline{\mathbb Q}$.
Let $x \in X( \overline{\mathbb Q})$.
\begin{enumerate}
\item\label{ksc1} The limit defining $\alpha_{f}(x)$ exists.
\item\label{ksc2} If the orbit $ O_{f}(x)=\{ f^{n}(x) \mid n=0,1,2, \dots \}$ is Zariski dense in $X$,
then $\alpha_{f}(x)=\delta_{f}$.
\end{enumerate}
\end{conj}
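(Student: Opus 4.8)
Conjecture \ref{ksc} is open in general, so the plan is to prove it for the classes treated in this paper by a uniform two-step pattern: the upper bound $\overline{\alpha}_{f}(x)\le\delta_{f}$, which holds for every surjective endomorphism, and the lower bound $\underline{\alpha}_{f}(x)\ge\delta_{f}$ on a Zariski-dense orbit, which is where all the work is. For the upper bound on a normal projective $X$: since $f^{*}$ preserves the nef cone and $\delta_{f}$ is its spectral radius, for each $\varepsilon>0$ the class $H_{\varepsilon}=\sum_{j\ge0}(\delta_{f}+\varepsilon)^{-j}(f^{j})^{*}A$ (with $A$ ample) converges in $N^{1}(X)_{\R}$ to an ample $\R$-divisor class with $f^{*}H_{\varepsilon}\preceq(\delta_{f}+\varepsilon)H_{\varepsilon}$; functoriality of Weil heights gives $h_{H_{\varepsilon}}\circ f\le(\delta_{f}+\varepsilon)h_{H_{\varepsilon}}+O(1)$, and iterating yields $h_{H_{\varepsilon}}(f^{n}(x))\ll(\delta_{f}+\varepsilon)^{n}$, so $\overline{\alpha}_{f}(x)\le\delta_{f}$; in the quasi-projective case one compactifies and argues on $\overline{X}$. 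Since $\underline{\alpha}_{f}(x)\ge1$ always, this already proves both parts when $\delta_{f}=1$; and since $\alpha_{f}(x)$ depends only on $f$ restricted to $\overline{O_{f}(x)}$, part (1) for all $x$ follows from part (2) by induction on $\dim X$. It remains to show $\underline{\alpha}_{f}(x)\ge\delta_{f}$ when $\delta_{f}>1$ and $O_{f}(x)$ is Zariski dense.

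For this I would use the Shibata mechanism. By Perron--Frobenius applied to the cone-preserving operator $f^{*}$ there is a nonzero nef class $D$ with $f^{*}D\equiv\delta_{f}D$, and when $\Pic(X)_{\Q}=N^{1}(X)_{\Q}$ one may take $D$ to be an actual $\Q$-Cartier divisor with $f^{*}D\sim_{\Q}\delta_{f}D$. Then $h_{D}\circ f=\delta_{f}h_{D}+O(1)$, so by a geometric-series estimate the ample canonical height $\hat h_{f,D}(x)=\lim_{n}h_{D}(f^{n}(x))/\delta_{f}^{n}$ exists and satisfies $h_{D}(f^{n}(x))=\hat h_{f,D}(x)\,\delta_{f}^{n}+O(1)$; moreover for ample $H$ one has $h_{D}\le m\,h_{H}+O(1)$ with $m\gg0$, so $\hat h_{f,D}(x)>0$ forces $\underline{\alpha}_{f}(x)\ge\delta_{f}$. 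Hence the whole problem reduces to the non-vanishing of $\hat h_{f,D}$ on Zariski-dense orbits, which is exactly Shibata's conjecture \cite{shi}, and \emph{this non-vanishing is the main obstacle}.

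The non-vanishing I would establish first for $X$ with $\Nef(X)$ rational polyhedral and generated by semi-ample divisors (Mori dream spaces, varieties of Fano type, projective toric varieties). Replace $f$ by an iterate so that $f^{*}$ fixes each extremal ray, let $D$ be the sum of the semi-ample extremal generators whose $f^{*}$-eigenvalue is $\delta_{f}$ (a nonzero semi-ample $\Q$-divisor, using $\Pic(X)_{\Q}=N^{1}(X)_{\Q}$), and let $\phi\colon X\to Y$ with $D\sim_{\Q}\phi^{*}A$, $A$ ample on $Y$, be the associated fibration; a curve argument (using that a surjective endomorphism of a normal projective variety is finite) shows $\phi\circ f$ factors through $\phi$, so $f$ descends to a surjective $g\colon Y\to Y$ with $g^{*}A\equiv\delta_{f}A$. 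If $\hat h_{f,D}(x)=0$ then $h_{A}(g^{n}(\phi(x)))=h_{D}(f^{n}(x))+O(1)=O(1)$, so by Northcott the $g$-orbit of $\phi(x)$ is finite; but $\dim Y\ge1$ (as $D\not\equiv0$ is semi-ample), so $O_{f}(x)$ would lie in finitely many fibres of $\phi$, contradicting Zariski density.

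The remaining classes follow by variants of this idea. For $X$ admitting an int-amplified endomorphism I would run the equivariant MMP of Meng--Zhang, reduce to a minimal model or a Mori fibre space, and induct on dimension, using at the base the effectivity statement quoted in the introduction (every nef Cartier divisor is $\Q$-linearly equivalent to an effective one); the delicate points are the termination/singularity inputs and the preservation of int-amplifiedness along each step. For a group endomorphism of a linear algebraic group, d\'evissage through a Borel subgroup, the unipotent radical and a maximal torus reduces the conjecture to $(\mathbb{G}_{m})^{n}$, where it is Silverman's theorem \cite{sil}. Across all cases the genuine difficulty is the non-vanishing of the canonical height in the third step: making the descent from $f$ to $g$ rigorous and ensuring that the semi-ample/polyhedral structure, or the equivariant MMP, really applies.
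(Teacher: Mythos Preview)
Your outline for part~(\ref{ksc2}) in the polyhedral/Mori-dream-space setting is essentially the paper's argument (\S\ref{sec:shi}): after iterating so that $f^{*}$ fixes each extremal ray of $\Nef(X)$, one obtains nef semi-ample $\Q$-eigendivisors with eigenvalue $\delta_{f}$, descends along the associated fibrations, and applies Northcott on the base. The paper treats each extremal generator $D_{i}$ with its own fibration $\pi_{i}\colon X\to Y_{i}$ rather than summing them into a single $D$, but the two packagings are equivalent. Your d\'evissage for linear algebraic groups also matches \S\ref{sec:grp}.

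There are, however, genuine gaps. First, your inductive reduction of part~(\ref{ksc1}) to part~(\ref{ksc2}) does not work as stated: the orbit closure $\overline{O_{f}(x)}$ need not be irreducible or normal, and $f$ need not restrict to a surjective endomorphism of a variety in the same class, so the inductive hypothesis is unavailable. The paper does not attempt this; for normal projective $X$ it simply cites \cite{ks1} (Remark~\ref{rmk:exist-of-ad}), and for linear algebraic groups it only asserts the weak form (existence and equality on Zariski-dense orbits). Second, the claim that ``when $\Pic(X)_{\Q}=N^{1}(X)_{\Q}$ one may take $D$ to be a $\Q$-Cartier divisor with $f^{*}D\sim_{\Q}\delta_{f}D$'' is false without more: Perron--Frobenius yields only a real eigenvector, and $\delta_{f}$ need not be rational. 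It is the rational polyhedrality of $\Nef(X)$, not the equality $\Pic_{\Q}=N^{1}_{\Q}$, that after iteration forces integer eigenvalues on the extremal rays (Lemma~\ref{integral eigen}). Third, your equivariant-MMP paragraph presents as a proof what the paper explicitly frames as a conditional strategy: the results in \S\ref{sec:equiMMP} require termination of flips $({\rm TF})_{d}$, lower-dimensional KSC $(*)_{j}$, and in Theorem~\ref{KSCfork>0} the extra hypothesis $\kappa(-K_{X})>0$; unconditionally the paper obtains only the reduction of Proposition~\ref{reduction-by-mmp} and the surface case (Proposition~\ref{ksc-for-klt-surf}).
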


\begin{rmk}
In \cite{ks3}, the conjecture is actually formulated for dominant rational self-maps of  smooth projective varieties.
\end{rmk}

\begin{rmk}
If $X$ is smooth projective and $f$ is a dominant rational map or 
$X$ is normal projective variety and $f$ is a morphism, then
the arithmetic degrees are bounded by the dynamical degree:
\[
\overline{\alpha}_{f}(x)\leq \delta_{f}
\]
for every $x\in X(\QQ)$ such that $f^{n}(x)$ is not contained in the indeterminacy locus of $f$ for all $n\geq0$ (\cite{ks3}, \cite{ma}).
Every surjective self-morphism $f$ of a smooth quasi-projective variety $X$ can be regarded 
as a dominant rational self-map of a smooth projectivization of $X$ and the same inequality holds.
\end{rmk}

\begin{rmk}\label{rmk:exist-of-ad}
If $X$ is normal projective and $f$ is a morphism, 
then $ \alpha_{f}(x)$ exists for every $x\in X(\QQ)$ and is equal to the absolute value of one of the eigenvalues of $f^{*} \colon N^{1}(X) \longrightarrow N^{1}(X)$ (\cite{ks1}).
\end{rmk}

\begin{rmk}
Let $f \colon X \longrightarrow X$ be as in Conjecture \ref{ksc} and let $x \in X(\QQ)$.
Then, $ \overline{\alpha}_{f^{n}}(x)= \overline{\alpha}_{f}(x)^{n}$ for any positive integer $n$ (\cite[Corollary 3.4]{ma}).
On the other hand, $\delta_{f^{n}}=\delta_{f}^{n}$ by definition.
Thus, if we know the existence of arithmetic degrees, we may replace $f$ by its power to prove Conjecture \ref{ksc} (\ref{ksc2}).
(Note that the $f$-orbit of $x$ is Zariski dense in $X$ if and only if the $f^{n}$-orbit of $x$ is Zariski dense in $X$.)
\end{rmk}

\begin{rmk}
Conjecture \ref{ksc} is verified in several cases.
See \cite[Remark 1.8]{mss} and also \cite{ms, ls}.
\end{rmk}

In \cite{shi}, Shibata introduces the so called ample canonical height function, which is a variant of dynamical canonical height functions.

\begin{defn}[Ample canonical height (cf. \cite{shi})]\label{def:ach}
Let $X$ be a geometrically normal projective variety defined over a number field $k$.
Let $f \colon X \longrightarrow X$ be a surjective $k$-morphism with $\delta_{f}>1$.
(The dynamical degree is defined by passing to the base change to $ \overline{k}$.)
Fix a height function $h_{X}$ associated with an ample divisor on $X$.
We take $h_{X}$ so that $h_{X}\geq1$.
Let $l_{f}$ be the minimal non-negative integer such that the sequence 
$\{h_{X}(f^{n}(x))/n^{l_{f}}\delta_{f}^{n}\}_{n=1}^{\infty}$ is bounded above for all $x\in X(\QQ)$.
(The existence of such $l_{f}$ is proved in \cite{ma}, for example).
We define the (lower) ample canonical height as
\[
\underline{h}_{f}(x)=\liminf_{n\to \infty}\frac{h_{X}(f^{n}(x))}{n^{l_{f}}\delta_{f}^{n}}.
\]
\end{defn}

In \cite{shi}, Shibata conjectures the following and proves it for several classes of self-morphisms.
(He actually treats only smooth projective varieties.)

\begin{conj}\label{shibata}
We use the notation in Definition \ref{def:ach}.
Let $K\supset k$ be a number field (contained in $ \overline{k}$).
Then, the set $Z_{f}(K)=\{x\in X(K)\mid \underline{h}_{f}(x)=0\}$ is 
contained in an $f$-invariant proper closed subset of $X_{ \overline{k}}$.
\end{conj}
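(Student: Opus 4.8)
Since Conjecture~\ref{shibata} is open in general, I outline how I would prove it for the class of varieties treated in \S\ref{sec:shi}: $X$ a normal projective variety over a number field $k$ with $\Pic X_{\Q}=N^{1}(X)_{\Q}$ and $\Nef(X)$ generated by finitely many semi-ample divisors, and $\delta_{f}>1$. The plan is to attach to $f$ the semi-ample fibration defined by the Perron eigendivisor of $f^{*}$, push the statement down to a \emph{polarized} endomorphism where it becomes Northcott's theorem, and then climb back up a Jordan chain of (generalized) eigendivisors to recover the ample canonical height, which is what accounts for the polynomial factor $n^{l_{f}}$.

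\emph{Eigendivisor and descent.} After replacing $f$ by a positive power---harmless for the conjecture and, up to the obvious rescaling, for $\delta_{f}$ and $l_{f}$---I may assume that $\delta_{f}$ is the only eigenvalue of $f^{*}$ on $N^{1}(X)_{\R}$ of modulus $\delta_{f}$; by the Perron--Schaefer condition (valid since $f^{*}$ preserves the strongly convex cone $\Nef(X)$) its generalized eigenspace then carries a Jordan block of the maximal size $l_{f}+1$ appearing in the estimates of \cite{ma}. Set $N=f^{*}-\delta_{f}$ and let $\pi$ be the projection onto the generalized $\delta_{f}$-eigenspace; the class $D_{0}=\tfrac{1}{l_{f}!}N^{l_{f}}\pi(H)$ (for a generic ample $H$) is nonzero, nef---being the limit of $(f^{n})^{*}H/(n^{l_{f}}\delta_{f}^{n})$---and satisfies $f^{*}D_{0}\equiv\delta_{f}D_{0}$; since $\Nef(X)$ is generated by semi-ample divisors, $D_{0}$ is semi-ample. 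Let $\phi\colon X\to Y$ be its associated fibration onto a normal projective $Y$, so $D_{0}\equiv\phi^{*}H_{Y}$ for an ample $\R$-divisor $H_{Y}$. For a $\phi$-contracted curve $C$ one has $(D_{0}\cdot f_{*}C)=\delta_{f}(D_{0}\cdot C)=0$, so $f(C)$ is $\phi$-contracted; with connectedness of the fibres this forces $\phi f$ to be constant on fibres of $\phi$, and the rigidity lemma yields a surjective $g\colon Y\to Y$ with $g\phi=\phi f$. Pulling back and using injectivity of $\phi^{*}$ gives $g^{*}H_{Y}\equiv\delta_{f}H_{Y}$, so $g$ is polarized with $\delta_{g}=\delta_{f}>1$; hence $\dim Y\ge 1$, the canonical height $\widehat{h}_{g}=\lim_{n}h_{H_{Y}}(g^{n}(\cdot))/\delta_{f}^{n}$ exists with $\widehat{h}_{g}=h_{H_{Y}}+O(1)$, and $\{\widehat{h}_{g}=0\}=\Preper(g)$ meets every $Y(L)$ ($L$ a number field) in a finite set by Northcott.

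\emph{Recovering $\underline{h}_{f}$ and conclusion.} Put $D_{j}=\tfrac{1}{l_{f}!}N^{l_{f}-j}\pi(H)$ for $0\le j\le l_{f}$, so $f^{*}D_{j+1}\equiv\delta_{f}D_{j+1}+D_{j}$. Here the hypothesis $\Pic X_{\Q}=N^{1}(X)_{\Q}$ enters decisively: it makes every numerical equivalence among $\R$-divisors used here an $\R$-linear equivalence, hence an equality of height functions up to $O(1)$. Thus $h_{D_{0}}(f^{n}x)=h_{H_{Y}}(g^{n}\phi(x))+O(1)=\delta_{f}^{n}\widehat{h}_{g}(\phi(x))+O(1)$, and the recursions $h_{D_{j+1}}\circ f=\delta_{f}h_{D_{j+1}}+h_{D_{j}}+O(1)$ solve by a hockey-stick summation to
\[
h_{D_{l_{f}}}(f^{n}x)=c\binom{n}{l_{f}}\delta_{f}^{n}\,\widehat{h}_{g}(\phi(x))+O\!\left(n^{l_{f}-1}\delta_{f}^{n}\right),\qquad c>0 .
\]
Writing $D_{l_{f}}$ as a difference of ample divisors gives $|h_{D_{l_{f}}}|\le C h_{X}+O(1)$, so $h_{X}(f^{n}x)\gg n^{l_{f}}\delta_{f}^{n}\widehat{h}_{g}(\phi(x))$ and therefore $\underline{h}_{f}(x)\ge c'\,\widehat{h}_{g}(\phi(x))$ for some $c'>0$. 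Consequently $\underline{h}_{f}(x)=0$ forces $\phi(x)\in\Preper(g)$; for $x\in X(K)$ the point $\phi(x)$ lies in $Y(L)$ for a number field $L$ over which $\phi,g$ are defined, hence in the finite set $\Preper(g)\cap Y(L)$. Thus $Z_{f}(K)$ is contained in $\phi^{-1}\big(\Preper(g)\cap Y(L)\big)$, a finite union of fibres of $\phi$ and hence a proper closed subset of $X$ (proper because $\dim Y\ge 1$). Finally $\underline{h}_{f}\circ f=\delta_{f}\,\underline{h}_{f}$, so $Z_{f}(K)$ is forward $f$-invariant and its Zariski closure is the required $f$-invariant proper closed subset of $X_{\overline{k}}$.

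\emph{Main obstacle.} The genuinely delicate part is not the reduction to the polarized case but the accounting for the factor $n^{l_{f}}$ when $l_{f}\ge 1$: comparison through the nef eigendivisor $D_{0}$ alone loses it, and one is forced to run the height recursion through the \emph{non-nef} generalized eigendivisors $D_{1},\dots,D_{l_{f}}$, for which no fibration is available and only the crude bound $|h_{D_{j}}|\le Ch_{X}+O(1)$ survives; showing that the hockey-stick summation produces a strictly positive leading constant (rather than an accidental cancellation) and that all errors stay at order $n^{l_{f}-1}\delta_{f}^{n}$ is where the real work lies. Subsidiary points to be checked carefully are the Perron--Schaefer input placing a maximal Jordan block at $\delta_{f}$, the rigidity argument for the descent, and, since $\delta_{f}$ need not be rational, the consistent use of $\R$-divisors throughout (legitimate precisely because $\Pic X_{\Q}=N^{1}(X)_{\Q}$).
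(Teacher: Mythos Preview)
Your outline is correct and would succeed, but you have missed the simplification that the hypotheses hand you: since $\Nef(X)$ is rational polyhedral and $f^{*}$ preserves it, some iterate of $f^{*}$ fixes every extremal ray (Lemma~\ref{integral eigen}); as those rays span $N^{1}(X)_{\R}$, that iterate is diagonalizable with positive rational eigenvalues, and hence so is $f^{*}$ itself. Thus $l_{f}=0$ in every case covered by Theorem~\ref{ksc for semi-ample}, and your Jordan-chain machinery (the $D_{j}$ for $j\ge 1$, the hockey-stick recursion, the entire ``main obstacle'' paragraph) addresses a phenomenon that cannot occur here. Relatedly, your opening reduction ``after passing to an iterate, $\delta_{f}$ is the only eigenvalue of modulus $\delta_{f}$'' is false for general cone-preserving operators (complex eigenvalues on the spectral circle need not disappear under iteration), but it \emph{is} valid under the polyhedral hypothesis for exactly the diagonalizability reason just given.

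The paper's argument exploits this head-on and is both shorter and sharper. After iterating so that each extremal ray $\R_{\ge 0}D_{i}$ is fixed with $f^{*}D_{i}\sim\lambda_{i}D_{i}$ and $D_{i}$ base-point-free, one takes the ample divisor $H=\sum_{i}D_{i}$ and computes directly that
\[
\frac{h_{H}(f^{n}x)}{\delta_{f}^{n}}\ \longrightarrow\ \sum_{\lambda_{i}=\delta_{f}}\hat{h}_{H_{i}}(\pi_{i}(x)),
\]
where $\pi_{i}\colon X\to Y_{i}$ is the semi-ample fibration of $D_{i}$ and $H_{i}$ is ample on $Y_{i}$; this is an \emph{equality}, and Northcott yields $Z_{f}(K)=\bigcap_{i}\pi_{i}^{-1}(\text{finite subset of }Y_{i}(K))$. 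Your route---one Perron eigendivisor $D_{0}$, one fibration $\phi$, and the inequality $\underline{h}_{f}\ge c'\,\hat{h}_{g}\circ\phi$ coming from $|h_{D_{0}}|\le Ch_{X}+O(1)$---is a legitimate alternative once you set $l_{f}=0$; it trades the full extremal decomposition for a single descent, at the price of handling an $\R$-divisor, making the semi-ample fibration of an $\R$-class precise, and obtaining only a one-sided bound on $\underline{h}_{f}$ rather than an exact formula.
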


\begin{rmk}
The set $Z_{f}(K)$ is independent of the choice of the ample height $h_{X}$ (\cite[\S3]{shi}). 
\end{rmk}

\begin{rmk}\label{shibata implies ksc}
Conjecture \ref{shibata} implies Conjecture \ref{ksc}(\ref{ksc2}) (\cite[\S1]{shi}).
\end{rmk}

\section{Preliminaries}\label{sec:pre}

In this section, we gather some preliminary facts that we use later.

\begin{lem}\label{lin eq eigen}
Let $X$ be a normal projective variety and $f \colon X \longrightarrow X$ a surjective morphism both defined over an algebraically closed field of characteristic zero.
The dynamical degree $\delta_{f}$ is equal to the spectral radius 
(i.e. the maximum among the absolute values of eigenvalues)
 of $f^{*} \colon N^{1}(X) \longrightarrow N^{1}(X)$.
\end{lem}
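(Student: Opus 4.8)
The plan is to reduce the statement to Gelfand's spectral radius formula, and the only place the hypothesis that $f$ is a \emph{morphism} (rather than a merely dominant rational self-map) enters is the functoriality of pullback: since $f$ is a morphism, pullback of Cartier divisor classes is contravariant under composition, so $(f^{n})^{*}=(f^{*})^{n}$ as endomorphisms of $N^{1}(X)$, hence of $N^{1}(X)_{\R}$. (For a general dominant rational self-map this identity holds only after passing to a resolution of indeterminacy, and the two operators can genuinely differ — that is exactly why the dynamical degree is a delicate limit in that generality but is elementary here.) I would also record at the outset that $f^{*}$ really does act on $N^{1}(X)$ in the normal case: a Cartier divisor pulls back along a morphism to a Cartier divisor, and by the projection formula $(f^{*}D\cdot C)=(D\cdot f_{*}C)$ for every curve $C$, so pullback preserves numerical triviality, and $N^{1}(X)$ is a finitely generated free abelian group on which $f^{*}$ then acts by an integer matrix.

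Granting the identity $(f^{n})^{*}=(f^{*})^{n}$, one computes
\[
\delta_{f}=\lim_{n\to\infty}\|(f^{n})^{*}\|^{1/n}=\lim_{n\to\infty}\|(f^{*})^{n}\|^{1/n},
\]
and since $f^{*}$ is a linear operator on the finite-dimensional real vector space $N^{1}(X)_{\R}$, Gelfand's formula identifies the right-hand side with the spectral radius $\rho(f^{*})$ of $f^{*}$. It remains to note that $\rho(f^{*})$ computed on $N^{1}(X)_{\R}$ equals the maximum of the absolute values of the (complex) eigenvalues of $f^{*}$: extension of scalars from the integral lattice $N^{1}(X)$ up to $\R$ and then to $\C$ does not change the characteristic polynomial, hence not the eigenvalues, and the spectral radius of a complex operator is by definition the maximum modulus of its eigenvalues. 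This yields the lemma.

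If one prefers to argue from the intersection-theoretic description $\delta_{f}=\lim_{n}\big((f^{n})^{*}H\cdot H^{\dim X-1}\big)^{1/n}$ with $H$ ample, the same conclusion can be reached: the bound $\delta_{f}\le\rho(f^{*})$ follows from Gelfand's formula after comparing the intersection form with a fixed operator norm, while $\delta_{f}\ge\rho(f^{*})$ follows from the fact that $f^{*}$ preserves the nef cone, a full-dimensional salient closed convex cone, so by the Perron--Frobenius theorem for cones $\rho(f^{*})$ is attained by a nonzero nef eigenvector $v$, and then $\big((f^{*})^{n}H\cdot H^{\dim X-1}\big)\ge c\,\rho(f^{*})^{n}$ for some $c>0$ since $(v\cdot H^{\dim X-1})>0$. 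In either route there is no real obstacle; the only subtlety worth keeping an eye on is the identity $(f^{n})^{*}=(f^{*})^{n}$, which genuinely requires $f$ to be a morphism.
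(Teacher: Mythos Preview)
Your main argument has a genuine gap in the normal (non-smooth) case. In this paper, the equation $\delta_{f}=\lim_{n}\|(f^{n})^{*}\|^{1/n}$ with the norm on $\End(N^{1}(X)_{\R})$ is the \emph{definition} only when $X$ is smooth; when $X$ is merely normal, $\delta_{f}$ is defined (Remark~\ref{rmk on dyn deg}) as $\delta_{g}$ for the induced rational self-map $g$ on a smooth resolution $\pi\colon Y\to X$, and $\delta_{g}$ lives on $N^{1}(Y)_{\R}$, not on $N^{1}(X)_{\R}$. Since $g$ is typically \emph{not} a morphism, one cannot invoke $(g^{n})^{*}=(g^{*})^{n}$, and your first displayed equation $\delta_{f}=\lim\|(f^{n})^{*}\|^{1/n}$ on $N^{1}(X)$ is precisely what has to be proved, not assumed. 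In the smooth case your argument via Gelfand is complete and shorter than the paper's; in the normal case it is circular.

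The paper bridges $N^{1}(Y)$ and $N^{1}(X)$ via the identity $(g^{n})^{*}(\pi^{*}D)\equiv\pi^{*}(f^{n})^{*}D=\pi^{*}(f^{*})^{n}D$, which is where the functoriality $(f^{n})^{*}=(f^{*})^{n}$ you correctly emphasize actually enters. From this identity, the paper runs exactly the two inequalities you sketch in your final paragraph: a Perron--Frobenius nef eigenvector gives $\rho(f^{*})\leq\delta_{f}$, and a semi-norm built from intersection with $H^{\dim X-1}$ (compared to a genuine norm) gives $\delta_{f}\leq\rho(f^{*})$. So your alternative route is essentially the paper's proof, once one supplies the missing step of pulling the intersection formula $\delta_{f}=\lim_{n}((f^{n})^{*}H\cdot H^{\dim X-1})^{1/n}$ down from $Y$ to $X$; you assume this formula on the normal $X$, whereas the paper derives it.
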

\begin{proof}
Take a resolution of singularities $\pi \colon Y \longrightarrow X$ and consider the following diagram:
\[
\xymatrix{
Y \ar[d]_{\pi} \ar@{-->}[r]^{g} & Y \ar[d]^{\pi}\\
X \ar[r]_{f} & X
}
\]
where $g$ is the induced rational map.
Note that $\delta_{f}=\delta_{g}$ by definition.
Let $\rho$ be the spectral radius of $ f^{*} \colon N^{1}(X)_{\R} \longrightarrow N^{1}(X)_{\R}$.

\begin{claim}
We have $\rho \leq \delta_{f}$.
\end{claim}
\begin{claimproof}
Since $f^{*}$ preserves the nef cone, there exists a nef $\R$-divisor $D \not \equiv 0$
such that $f^{*}D \equiv \rho D$ (cf. \cite{birkh}).
Then
\begin{align*}
(g^{n})^{*}(\pi^{*}D)\equiv (\pi \circ g^{n})^{*}D \equiv (f^{n}\circ \pi)^{*}D
\equiv \pi^{*}(f^{*})^{n}D\equiv \rho^{n}\pi^{*}D.
\end{align*}
Thus, the spectral radius of $(g^{n})^{*}$ is not less than $\rho^{n}$.
By \cite[Remark 7]{ks3}, $\delta_{g}=\lim_{n\to \infty}(\text{spectral radius of $(g^{n})^{*}$})^{1/n}$.
Thus we get $\delta_{g}\geq \rho$.

\end{claimproof}

\begin{claim}\label{dynamical deg and eigen}
We have $\delta_{f} \leq \rho$.
Therefore we have $\delta_{f}=\rho$.
\end{claim}
\begin{claimproof}
Let $H$ be an ample divisor on $X$.
Let $d=\dim X=\dim Y$.
Since $\pi^{*}H$ is nef and big, we have
\begin{align*}
\delta_{g}&=\lim_{n \to \infty}((g^{n})^{*}\pi^{*}H\cdot (\pi^{*}H)^{d-1})^{1/n}\\
&=\lim_{n \to \infty}(\pi^{*}(f^{n})^{*}H\cdot \pi^{*}H^{d-1})^{1/n}
=\lim_{n\to\infty}((f^{n})^{*}H\cdot H^{d-1})^{1/n}.
\end{align*}
Now, we define a map $||\ || \colon N^{1}(X)_{\R} \longrightarrow \R$
by
\begin{align*}
||D||=\inf \left\{ (E_{1}\cdot H^{d-1})+(E_{2}\cdot H^{d-1}) \relmiddle| \parbox[c]{.4\linewidth}{$D \equiv E_{1}-E_{2}$ where $E_{1}, E_{2}$ are effective classes} \right\}
\end{align*}
for $D \in N^{1}(X)_{\R}$.
This is a semi-norm and if $D$ is effective then $||D||=(D\cdot H^{d-1})$.
Fix a norm $||\ ||_{1}$ on $N^{1}(X)_{\R}$.
Then there exists a constant $C>0$ such that $||\ ||\leq C ||\ ||_{1}$.
Then
\begin{align*}
\delta_{f}&=\delta_{g}=\lim_{n \to \infty}||(f^{n})^{*}H||^{1/n}\\
&\leq \lim_{n \to \infty}(C||(f^{n})^{*}H||_{1})^{1/n}=\lim_{n\to \infty} ||(f^{*})^{n}H||_{1}^{1/n}\leq \rho.
\end{align*}

\end{claimproof}

\end{proof}

\begin{rmk}
By projection formula, $\delta_{f}$ is also equal to the spectral radius of $f_{*} \colon N_{1}(X) \longrightarrow N_{1}(X)$.
\end{rmk}

\begin{prop}\label{weakNorth}
Let $X$ be a normal projective variety defined over a number filed $K$.
Let $D$ be a Cartier divisor on $X_{ \overline{K}}=X\times_{K} \overline{K}$ such that $\kappa(D)>0$.
Let $h_{D}$ be a Weil height function associated with $D$ and $B, d$ positive real numbers.
Then the set
\begin{align*}
\left\{ x \in X(L) \relmiddle| \parbox[c]{.7\linewidth}{$h_{D}(x)\leq B$, $K \subset L\subset \overline{K}$ is an intermediate field with $[L:K]\leq d$}  \right\}
\end{align*} 
is not Zariski dense in $X_{ \overline{K}}$.
\end{prop}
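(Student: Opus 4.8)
The plan is to reduce the statement to the classical Northcote-type finiteness for ample divisors on projective space, together with the fact that positivity of the Iitaka dimension gives us a rational map whose image has positive dimension. Since the conclusion is purely about Zariski density of a set of points, and since $\kappa(D)$ depends only on the linear system of multiples of $D$, I would first replace $D$ by a positive multiple $mD$; this changes $h_D$ by a bounded multiple and so the boundedness hypothesis $h_D(x) \le B$ is preserved up to enlarging $B$. Hence I may assume $\kappa(D) > 0$ is witnessed by an honest rational map $\phi = \phi_{|D|} \colon X \dashrightarrow Y \subset \mathbb{P}^N$ associated to the complete linear system $|D|$, where $\dim Y = \kappa(D) \ge 1$, defined over $\overline{K}$ (and descending to a finite extension of $K$, which we may absorb into $K$ since $[L:K] \le d$ only needs to be replaced by $[L:K'] \le d[K':K]$).

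The key steps are then: (i) Resolve the indeterminacy of $\phi$ by a birational morphism $\pi \colon X' \to X$, so that $\psi := \phi \circ \pi \colon X' \to Y$ is a morphism with $\psi^* \mathcal{O}_Y(1) \sim \pi^* D + (\text{effective exceptional})$; in particular, by functoriality of height functions, $h_{\mathcal{O}_Y(1)}(\psi(x')) \le h_{\pi^* D}(x') + O(1) = h_D(\pi(x')) + O(1)$ on the locus where $\pi$ is an isomorphism (and the complement of that locus is a proper closed subset, which we discard at the end). (ii) On $Y$, the divisor $\mathcal{O}_Y(1)$ is ample, so by the standard Northcott finiteness theorem applied to $Y \subset \mathbb{P}^N$, the set of points $y \in Y(\overline{K})$ with $h(y) \le B'$ and bounded degree $[L:K] \le d$ is \emph{finite}. (iii) Pulling back: the points $x$ in our set map under $\phi$ into this finite set $F \subset Y(\overline{K})$, hence our set of points is contained in $\phi^{-1}(F) \cup (\text{indeterminacy locus of } \phi) \cup (\text{exceptional locus})$. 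Each fiber $\phi^{-1}(y)$ for $y \in F$ is a proper closed subset of $X$ because $\dim Y \ge 1$ means $\phi$ is nonconstant, so fibers have dimension $< \dim X$; a finite union of proper closed subsets is a proper closed subset, hence not Zariski dense in $X_{\overline{K}}$.

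The main obstacle — really the only subtle point — is handling the indeterminacy of $\phi$ and the passage between $X$ and the resolution $X'$ cleanly: one must check that the functoriality comparison $h_{\psi^*\mathcal{O}_Y(1)} = h_{\mathcal{O}_Y(1)} \circ \psi + O(1)$ combined with $\psi^*\mathcal{O}_Y(1) \sim \pi^*D + F$ with $F$ effective gives the needed bound $h_{\mathcal{O}_Y(1)}(\psi(x')) \le h_D(\pi(x')) + O(1)$ only after restricting to points $x'$ lying outside $\Supp F$ (since $h_F$ is bounded below only there, not globally), and that this excluded locus together with the image of $\Exc(\pi)$ and the base locus of $|D|$ is still a proper Zariski-closed subset of $X$ — which it is, as a finite union of proper closed subsets. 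Everything else is the routine Northcott theorem on projective space plus the elementary fact that a nonconstant morphism to a positive-dimensional variety has proper fibers.
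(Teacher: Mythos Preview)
Your approach is essentially the same as the paper's: pass to a multiple of $D$, resolve the indeterminacy of $\phi_{|D|}$, compare heights via functoriality, apply Northcott on the positive-dimensional image $Y$, and pull back a finite set of points. There is, however, one sign error that makes the key inequality fail as written.

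When you resolve the base locus of $|D|$ by $\pi\colon X'\to X$, the relation you obtain is
\[
\pi^*D \;\sim\; \psi^*\mathcal{O}_Y(1) + F
\]
with $F$ effective (the pulled-back base/exceptional part is \emph{subtracted} from $\pi^*D$ to make the system free), not $\psi^*\mathcal{O}_Y(1)\sim\pi^*D+F$ as you wrote. With your sign, one gets $h_{\mathcal{O}_Y(1)}\circ\psi = h_D\circ\pi + h_F + O(1)$, and the fact that $h_F$ is bounded \emph{below} off $\Supp F$ then yields only $h_{\mathcal{O}_Y(1)}\circ\psi \ge h_D\circ\pi + O(1)$, the opposite of what you need. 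With the correct sign one has $h_D\circ\pi = h_{\mathcal{O}_Y(1)}\circ\psi + h_F + O(1)$, and now $h_F\ge -C$ off $\Supp F$ gives exactly $h_{\mathcal{O}_Y(1)}(\psi(x')) \le h_D(\pi(x')) + O(1)$ there, which is what feeds into Northcott. Once this is fixed, your argument goes through and coincides with the paper's.
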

\begin{proof}
By replacing $D$ by $mD$ for some $m>0$, we may assume the image $Y$ of the rational map $\Phi_{|D|}$ defined by the complete linear system $|D|$
has dimension $\kappa(D)>0$.
Let $\pi \colon Z \longrightarrow X_{ \overline{K}}$ be the blow up along the base ideal of  $|D|$ and
we denote the exceptional divisor by $E$.
Then $\varphi=\Phi \circ \pi \colon Z \longrightarrow X \dashrightarrow Y$ is a surjective morphism and 
there exists an ample divisor $H$ on $Y$ such that $\varphi^{*}H\sim \pi^{*}D-E$.

Now, fix height functions $h_{H}, h_{D}$, and $h_{E}$.
Then, $h_{D}\circ \pi=h_{H}\circ \varphi +h_{E}+O(1)$. 
Since $h_{E}$ is bounded below outside $E$, there exists $B'>0$ so that
\begin{align*}
&\{x\in X(L) \mid h_{D}(x)\leq B, \text{$K\subset L\subset \overline{K}$ with }[L:K]\leq d\}\\
&\subset 
\pi(E)(\QQ) \cup \pi\left(\left\{z \in Z(L) \relmiddle| \parbox[c]{.5\linewidth}{$h_{H}(\varphi(z))\leq B'$, $K\subset L\subset \overline{K}$ with $[L:K]\leq d$} \right\} \right)\\
&\subset
\pi(E)(\QQ) \cup \pi\left(\varphi^{-1}\left( \left\{ y \in Y(L) \relmiddle| \parbox[c]{.4\linewidth}{$h_{H}(y)\leq B'$, $K\subset L\subset \overline{K}$ with $[L:K]\leq d$} \right\} \right) \right).
\end{align*}
Since $H$ is ample, the set $\{y \in Y(L) \mid h_{H}(y)\leq B', \text{$K\subset L\subset \overline{K}$ with  }[L:K]\leq d\}$ is finite.
Therefore, we get the statement.

\end{proof}

\begin{prop}\label{canht-posiitaka}
Let $X$ be a normal projective variety and $f \colon X \longrightarrow X$ a surjective endomorphism, both defined over $\QQ$.
Suppose there exists a $\Q$-Cartier divisor $D$ on $X$ such that
\begin{enumerate}
\item\label{cond:eigen} $f^{*}D \sim_{\Q} dD$ and $d=\delta_{f}>1$;
\item\label{cond:iitaka} $\kappa(D)>0$.
\end{enumerate}
Then we have $ \alpha_{f}(x)=\delta_{f}$ for every $x\in X(\QQ)$ with Zariski dense $f$-orbit.
\end{prop}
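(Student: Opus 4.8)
The plan is to combine a Tate-style canonical height construction attached to $D$ with the weak Northcott property of Proposition \ref{weakNorth}. Since $\overline{\alpha}_{f}(x)\leq \delta_{f}$ always holds in this setting and $\alpha_{f}(x)$ exists by Remark \ref{rmk:exist-of-ad}, it suffices to prove the lower bound $\underline{\alpha}_{f}(x)\geq \delta_{f}$ when the $f$-orbit of $x$ is Zariski dense. As a first reduction, I would replace $D$ by a suitable positive integer multiple: since $D$ is $\Q$-Cartier with $\kappa(D)>0$, for an appropriate $m$ the divisor $mD$ is an honest Cartier divisor with $h^{0}(mD)\geq 2$, hence is linearly equivalent to a nonzero effective divisor, while the hypotheses $f^{*}D\sim_{\Q}dD$, $d=\delta_{f}>1$, $\kappa(D)>0$ all persist. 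Choosing $N$ with $Nf^{*}D\sim NdD$ and using functoriality of Weil heights under morphisms, one gets a constant making $h_{D}\circ f = d\,h_{D}+O(1)$; the standard telescoping argument (using $d>1$) then produces a canonical height $\hat h_{D}\colon X(\QQ)\to\R$ with $\hat h_{D}=h_{D}+O(1)$ and $\hat h_{D}\circ f = d\,\hat h_{D}$.

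The heart of the argument is to show $\hat h_{D}(x)>0$. For nonnegativity: because the orbit of $x$ is Zariski dense and $\Supp(D)$ is a proper closed subset, infinitely many iterates $f^{n}(x)$ lie off $\Supp(D)$, where the height of the effective divisor $D$ is bounded below; since $\hat h_{D}(f^{n}(x))=d^{n}\hat h_{D}(x)$ and $\hat h_{D}=h_{D}+O(1)$, dividing by $d^{n}$ and passing to the limit along that subsequence forces $\hat h_{D}(x)\geq 0$. For strict positivity: if $\hat h_{D}(x)=0$, then $h_{D}(f^{n}(x))=O(1)$, so the orbit $O_{f}(x)$, which is contained in $X(K)$ for the number field $K$ over which $x$ is defined, is a set of bounded $h_{D}$-height and bounded degree. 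By Proposition \ref{weakNorth} (weak Northcott for divisors of positive Iitaka dimension) such a set is not Zariski dense, contradicting the hypothesis. Hence $\hat h_{D}(x)>0$.

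It then remains to transfer this growth to an ample height. Fixing an ample divisor $H$, for $N'\gg 0$ the divisor $N'H-D$ is globally generated, so $h_{D}\leq N'h_{H}+O(1)$ on all of $X(\QQ)$. Combining with $h_{D}(f^{n}(x))=\hat h_{D}(f^{n}(x))+O(1)=d^{n}\hat h_{D}(x)+O(1)\geq c\,d^{n}$ for $n$ large (some $c>0$), one obtains $h_{H}(f^{n}(x))\geq c'd^{n}$ eventually, hence $\underline{\alpha}_{f}(x)\geq d=\delta_{f}$. Together with $\overline{\alpha}_{f}(x)\leq\delta_{f}$ and the existence of $\alpha_{f}(x)$ from Remark \ref{rmk:exist-of-ad}, this gives $\alpha_{f}(x)=\delta_{f}$.

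I expect the main obstacle to be precisely the passage from nonnegativity to strict positivity of $\hat h_{D}(x)$: since $D$ is only assumed to have positive Iitaka dimension — not to be nef or ample — ordinary Northcott is unavailable, and one cannot rule out a priori that $\hat h_{D}$ vanishes on a Zariski dense orbit without invoking the weaker finiteness statement of Proposition \ref{weakNorth}. The effectivity reduction is what makes the nonnegativity step clean, and the remaining comparisons with an ample height are routine.
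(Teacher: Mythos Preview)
Your proof is correct and follows essentially the same approach as the paper: build the Tate canonical height $\hat h_{D}$ and invoke Proposition \ref{weakNorth} to force $\hat h_{D}(x)>0$ on a Zariski dense orbit. The paper's version is a bit more economical in that it treats the whole case $\hat h_{D}(x)\leq 0$ at once (then $h_{D}(f^{n}(x))=\hat h_{D}(f^{n}(x))+O(1)\leq O(1)$, already contradicting Proposition \ref{weakNorth}), so your reduction to an effective representative of $D$ and the separate nonnegativity step are not needed.
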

\begin{proof}
Take a height function $h_{D}$ associated with $D$.
Define
\begin{align*}
\hat{h}_{D}(x)=\lim_{n\to \infty} \frac{h_{D}(f^{n}(x))}{d^{n}} \qquad (x\in X(\QQ)).
\end{align*}
Note that by condition (\ref{cond:eigen}), this limit converges and $\hat{h}_{D}=h_{D}+O(1)$.
Let $x\in X(\QQ)$ be a point with Zariski dense $f$-orbit.
If $\hat{h}_{D}(x)\leq0$, then $\hat{h}_{D}(f^{n}(x))=d^{n}\hat{h}_{D}(x)\leq0$ for all $n\geq0$.
This implies $h_{D}$ is bounded above on the orbit of $x$.
Note that $f^{n}(x)$, $n\geq0$ are defined over a same large number field.
By condition (\ref{cond:iitaka}) and Proposition \ref{weakNorth}, we get a contradiction.
Thus, we get $\hat{h}_{D}(x)>0$ and this implies $ \alpha_{f}(x)=d=\delta_{f}$.

\end{proof}

\section{Shibata's conjecture}\label{sec:shi}

\subsection{}

In this section, we prove the following theorem.

\begin{thm}\label{ksc for semi-ample}
Let $X$ be a normal projective variety and $f \colon X \longrightarrow X$ a surjective morphism with $\delta_{f}>1$, both defined over $\QQ$.
Assume that $N^{1}(X)_{\Q}\simeq \Pic (X)_{\Q}$ and the nef cone is generated by finitely many semi-ample integral divisors.
Then Conjecture \ref{shibata} holds for $f$.
In particular,  we have $ \alpha_{f}(x)=\delta_{f}$ for every point $x\in X(\QQ)$ with Zariski dense $f$-orbit.
\end{thm}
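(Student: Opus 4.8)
The plan is to reduce Conjecture \ref{shibata} for such $f$ to the situation of Proposition \ref{canht-posiitaka}, i.e.\ to produce a $\Q$-Cartier divisor $D$ with $f^{*}D\sim_{\Q}\delta_{f}D$ and $\kappa(D)>0$, but more carefully: since we want the full strength of Conjecture \ref{shibata} (not merely KSC), we must control the vanishing locus $Z_{f}(K)$ of the ample canonical height, so we should work with the eigendivisor geometrically. First I would invoke Lemma \ref{lin eq eigen}: $\delta_{f}$ equals the spectral radius of $f^{*}$ on $N^{1}(X)_{\R}$, and because $f^{*}$ preserves the nef cone, a Perron--Frobenius/Birkhoff-type argument (as in the proof of that lemma) gives a nonzero nef class $v\in N^{1}(X)_{\R}$ with $f^{*}v=\delta_{f}v$. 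The key point is to upgrade this real nef eigenclass to an \emph{effective, semi-ample} one: since the nef cone is a rational polyhedral cone generated by finitely many semi-ample integral divisors $A_{1},\dots,A_{r}$, the linear map $f^{*}$ permutes the extremal rays up to scaling, so after replacing $f$ by a power $f^{m}$ we may assume $f^{*}$ maps each generating ray to a (nonnegative) combination of generators; then the eigenvector $v$ for the top eigenvalue $\delta_{f}^{m}$ can be chosen inside the cone spanned by a subset of the $A_{i}$, hence $v$ is represented by an effective semi-ample $\R$-divisor. Using $N^{1}(X)_{\Q}\simeq\Pic(X)_{\Q}$ and rationality of $\delta_{f}^{m}$ (it is an eigenvalue of the integral matrix $f^{*}$ on a rational eigenspace — one must check the eigenvalue itself is rational, which follows since the eigenray is a rational ray of the polyhedral cone fixed by $f^{*}$), we get an honest $\Q$-Cartier semi-ample divisor $D$ with $f^{*}D\sim_{\Q}\delta_{f}D$ after a further replacement by a power.

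Next I would analyze $\kappa(D)$. Since $D$ is semi-ample, it defines a morphism $\varphi=\Phi_{|mD|}\colon X\to Y$ onto a normal projective variety $Y$ with $D\sim_{\Q}\varphi^{*}H$ for an ample $\Q$-divisor $H$ on $Y$, and $\kappa(D)=\dim Y$. If $\dim Y>0$ we are in the hypotheses of Proposition \ref{canht-posiitaka} and obtain $\alpha_{f}(x)=\delta_{f}$ on Zariski-dense orbits; but for Conjecture \ref{shibata} I would argue directly with the divisor: the relation $f^{*}D\sim_{\Q}\delta_{f}D$ descends (after possibly replacing $f$ by a power, using that $f$ preserves the fibration $\varphi$, which holds because $f^{*}$ preserves the face of the nef cone cut out by $D$) to a surjective morphism $g\colon Y\to Y$ with $g^{*}H\sim_{\Q}\delta_{f}H=\delta_{g}H$; thus $h_{Y}\circ g^{n}\asymp \delta_{f}^{n}h_{Y}$ on $Y(\QQ)$, so in Definition \ref{def:ach} we have $l_{f}=0$ and, pulling back, $h_{X}(f^{n}(x))\ge c\,\delta_{f}^{n}h_{Y}(\varphi(f^{n}(x)))+O(1)$. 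Consequently $\underline h_{f}(x)=0$ forces $h_{Y}(\varphi(f^{n}(x)))$ to stay bounded along the forward orbit of $\varphi(x)$ under $g$; since $H$ is ample on the positive-dimensional $Y$, Proposition \ref{weakNorth} (applied over a fixed number field containing the whole orbit) shows such points are not Zariski dense in $Y$, so $\varphi^{-1}$ of their closure is a proper $f$-invariant closed subset of $X_{\overline k}$ containing $Z_{f}(K)$.

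It remains to dispose of the degenerate case $\dim Y=0$, i.e.\ $\kappa(D)=0$. I claim this cannot happen when $\delta_{f}>1$: if the top nef eigenclass $D$ had $\kappa(D)=0$ then $D\equiv 0$ would be forced (a semi-ample divisor with $\kappa=0$ is $\Q$-linearly trivial, hence numerically trivial), contradicting $D\not\equiv 0$; and $D\not\equiv0$ is guaranteed because $\delta_{f}>1$ means the $\delta_{f}$-eigenspace is nonzero in $N^{1}(X)_{\R}\neq 0$. So $\dim Y>0$ always, and the argument above applies; the final sentence ($\alpha_{f}(x)=\delta_{f}$ on dense orbits) then follows from Remark \ref{shibata implies ksc}, or directly from Proposition \ref{canht-posiitaka}.

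The main obstacle I anticipate is the bookkeeping around replacing $f$ by powers while keeping track of the fibration $\varphi$ and the descent of the eigendivisor relation to $Y$: one must verify that $f$ genuinely descends along $\varphi$ (equivalently that $f$ contracts no $\varphi$-fiber to a point outside a $\varphi$-fiber), which should follow from $f^{*}$ preserving the supporting face of the nef cone corresponding to $D$ together with the rigidity of the Iitaka/semi-ample fibration, but making this precise — and ensuring the eigenvalue $\delta_{f}$ is actually rational so that $D$ can be taken $\Q$-Cartier rather than merely $\R$-Cartier — is the delicate part. A secondary subtlety is the passage from an $\R$-nef eigenclass to an effective semi-ample \emph{integral} divisor, which genuinely uses the finite semi-ample generation hypothesis and the identification $\Pic(X)_{\Q}=N^{1}(X)_{\Q}$; without the latter an eigenclass in $N^{1}$ need not lift to $\Pic_{\Q}$.
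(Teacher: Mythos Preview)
Your approach is essentially the paper's, but you have made it harder than necessary and introduced a real gap along the way. The paper's key simplification (Lemma \ref{integral eigen}) is that $f^{*}$, being a linear automorphism of $N^{1}(X)_{\R}$ preserving the polyhedral cone $\Nef(X)$, must \emph{permute} the finitely many extremal rays; hence after replacing $f$ by an iterate every ray $\R_{\ge0}D_{i}$ is fixed, $f^{*}D_{i}\sim\lambda_{i}D_{i}$ with $\lambda_{i}\in\Z_{>0}$ (using $\Pic(X)_{\Q}=N^{1}(X)_{\Q}$), and $\delta_{f}=\max_{i}\lambda_{i}$. This disposes of your Perron--Frobenius detour and your worry about rationality of $\delta_{f}$ in one stroke: the eigendivisors are the extremal generators themselves, automatically semi-ample and integral. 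Your sentence ``after replacing $f$ by a power we may assume $f^{*}$ maps each generating ray to a nonnegative combination of generators'' is vacuous (that is just $f^{*}\Nef(X)\subset\Nef(X)$); the content is that the rays are permuted, hence eventually fixed.

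The genuine gap is your claim $l_{f}=0$. You justify it by ``$h_{Y}\circ g^{n}\asymp\delta_{f}^{n}h_{Y}$ on $Y(\QQ)$'', but that controls growth along the single eigendivisor $D=\varphi^{*}H$, not the growth of an \emph{ample} height on $X$, which is what $l_{f}$ measures. If $l_{f}>0$ your lower bound for $\underline h_{f}(x)$ collapses to $0\ge0$ and proves nothing. The paper closes this by taking the ample height to be $h_{H}$ with $H=\sum_{i}D_{i}$: then $h_{H}(f^{n}(x))/\delta_{f}^{n}=\sum_{i}h_{D_{i}}(f^{n}(x))/\delta_{f}^{n}$, each summand converges (to $\hat h_{D_{i}}(x)$ if $\lambda_{i}=\delta_{f}$, to $0$ otherwise), so $l_{f}=0$ falls out and one gets the exact formula $\underline h_{f}(x)=\sum_{\lambda_{i}=\delta_{f}}\hat h_{H_{i}}(\pi_{i}(x))$, whence $Z_{f}(K)$ is the intersection of the $\pi_{i}^{-1}$ of finite sets by Northcott. (Incidentally your displayed inequality $h_{X}(f^{n}(x))\ge c\,\delta_{f}^{n}h_{Y}(\varphi(f^{n}(x)))+O(1)$ double-counts: the intended bound is $h_{X}(f^{n}(x))\ge c\,h_{H}(g^{n}(\varphi(x)))+O(1)$, with the $\delta_{f}^{n}$ appearing only after you invoke $g^{*}H\sim_{\Q}\delta_{f}H$.) Once $l_{f}=0$ is established your single-eigendivisor argument does go through, but the clean way to get it is exactly the paper's: diagonalise $f^{*}$ on all the extremal generators at once.
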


\begin{rmk}
The following classes of varieties satisfy the assumptions of the theorem and therefore Conjecture \ref{shibata} and \ref{ksc} hold for endomorphisms of them: 
\begin{enumerate}
\item Mori dream spaces;
\item varieties of Fano type (these are Mori dream space if they are $\Q$-factorial). This class contains all projective toric varieties.
\end{enumerate}
The assumptions of the theorem are actually parts of definition of Mori dream spaces.
We can directly check the assumptions for varieties of Fano type.
\end{rmk}
%projective toric is Q-factorial iff the cones are simplicial. Lemma 2.4 in ''Introduction to the Toric Mori theory''
%projective toric varieties are of Fano type. Cox Example 11.4.26 

\begin{lem}\label{polyhedral}
Let $(X, \Delta)$ be a projective klt pair over $\QQ$ where $\Delta$ is an effective $\Q$-Weil divisor such that $-K_{X}-\Delta$ is nef and big.
Then
\begin{enumerate}
\item every nef integral divisor on $X$ is semi-ample;
\item the nef cone $\Nef(X)$ is a rational polyhedral cone;
\item $(\Pic X)_{\Q}=N^{1}(X)_{\Q}$.
\end{enumerate}
\end{lem}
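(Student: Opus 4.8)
The plan is to reduce at once to the log Fano situation in which $-(K_X+\Delta)$ is ample, and then deduce the three assertions from the Base Point Free theorem, the Cone theorem, and Kawamata--Viehweg vanishing, respectively.

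First I would improve the boundary. Since $-K_X-\Delta$ is nef and big, Kodaira's lemma gives a $\Q$-linear equivalence $-K_X-\Delta\sim_\Q A+E$ with $A$ an ample $\Q$-divisor and $E\geq 0$ a $\Q$-divisor. For $0<\varepsilon\ll 1$ the pair $(X,\Delta+\varepsilon E)$ is still klt, because klt is an open condition on the boundary and $(X,\Delta)$ is klt, and
\[
-K_X-(\Delta+\varepsilon E)\sim_\Q \varepsilon A+(1-\varepsilon)(-K_X-\Delta)
\]
is the sum of an ample and a nef $\Q$-divisor, hence ample. Since all three conclusions concern $X$ alone, after replacing $\Delta$ by $\Delta+\varepsilon E$ we may and do assume that $(X,\Delta)$ is klt with $-(K_X+\Delta)$ ample. (For (1) and (3) it would already suffice to keep $-(K_X+\Delta)$ nef and big, but (2) genuinely needs ampleness, so I make the reduction once and for all.)

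For (1): if $D$ is a nef integral divisor, then $D-(K_X+\Delta)=D+(-(K_X+\Delta))$ is a sum of a nef and an ample divisor, hence ample, so in particular nef and big; the Kawamata--Shokurov Base Point Free theorem for the klt pair $(X,\Delta)$ then shows that $D$ is semi-ample. For (2): since $-(K_X+\Delta)$ is ample, $(K_X+\Delta)\cdot z<0$ for every nonzero $z$ in the Mori cone $\overline{NE}(X)\subset N_1(X)_\R$, so the Cone theorem for klt pairs gives $\overline{NE}(X)=\sum_{j}\R_{\geq 0}[C_j]$ with only finitely many extremal rational curves $C_j$; thus $\overline{NE}(X)$ is a rational polyhedral cone, and $\Nef(X)$, being its dual cone, is rational polyhedral as well.

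For (3): Kawamata--Viehweg vanishing applied to the klt pair $(X,\Delta)$ with $-(K_X+\Delta)$ nef and big yields $H^i(X,\mathcal{O}_X)=0$ for $i>0$, in particular $H^1(X,\mathcal{O}_X)=0$. In characteristic zero this forces the Picard variety $\Pic^0(X)$ to be trivial, so $\Pic(X)=\NS(X)$ is finitely generated; since the kernel of the natural surjection $\NS(X)\to N^1(X)$ is torsion, tensoring with $\Q$ gives $(\Pic X)_\Q=N^1(X)_\Q$. I expect the only points requiring care to be the perturbation of $\Delta$ above and the bookkeeping in this last comparison of $\Pic$, $\NS$, and $N^1$ on a possibly singular normal $X$; the substantive inputs — the Base Point Free theorem, the Cone theorem, and Kawamata--Viehweg vanishing — are applied essentially verbatim.
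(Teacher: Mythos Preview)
Your proof is correct, and for (1) and (2) it matches the paper almost verbatim: the paper also invokes the base point free theorem for (1), and for (2) it perturbs $\Delta$ by a small multiple of an effective $E$ (coming from the bigness of $-K_X-\Delta$) to make $-(K_X+\Delta)$ ample, then applies the cone theorem.

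The genuine difference is in (3). Instead of Kawamata--Viehweg vanishing and the passage through $\Pic^0$, $\NS$, and $N^1$, the paper simply reuses (1): if a $\Q$-divisor $D$ is numerically trivial then it is in particular nef, hence semi-ample by (1); but a semi-ample numerically trivial divisor is $\Q$-linearly equivalent to zero, since the associated morphism must contract $X$ to a point. This is a one-line deduction that sidesteps exactly the ``bookkeeping'' on a possibly singular $X$ that you yourself flagged as the delicate point. Your route is perfectly valid and yields the extra information $H^{i}(X,\mathcal{O}_X)=0$ for $i>0$, but the paper's argument is shorter and stays entirely within what has already been established.
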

\begin{proof}
(1) This follows from the base point free theorem.
(2) Take an effective divisor $E$ such that $-K_{X}-\Delta-\frac{1}{m}E$ is ample for all sufficiently large $m$ (cf. \cite[Example 2.2.19]{laz}).
Since $(X, \Delta+\frac{1}{m}E)$ is klt for large $m$, we may assume $-K_{X}-\Delta$ is ample.
Now the statement follows from the cone theorem.
(3) If a $\Q$-divisor is numerically trivial, then it is semi-ample by (1) and therefore $\Q$-linearly equivalent to zero.
\end{proof}

\if0
\begin{lem}\label{nef cone of toric}
Let $X$ be a projective toric variety over $\QQ$.
Then
\begin{enumerate}
\item every nef integral divisor on $X$ is basepoint free;
\item the nef cone $\Nef(X)$ is a rational polyhedral cone;
\item $(\Pic X)_{\Q}=N^{1}(X)_{\Q}$.
\end{enumerate}
\end{lem}
\begin{proof}
See \cite[Theorem 6.3.12]{cox}, \cite[Theorem 6.3.20]{cox}, and \cite[Proposition 6.3.15]{cox} for
(1), (2), and (3) respectively. 
\end{proof}
\fi

\begin{ex}
Let $X$ be a projective toric variety constructed from a fan.
Then $X$ has endomorphisms of the form $\sigma \circ f$ where $f$ is an endomorphism coming from an endomorphism of the fan
and $\sigma$ is an automorphism of $X$.
Note that the automorphism group of $X$ is a linear algebraic group and could be larger than the torus.
Conjecture \ref{shibata} and Conjecture \ref{ksc} for endomorphisms of such form, which is now true from Theorem \ref{ksc for semi-ample},
do not seem to be already known.
\end{ex}

%%%%%%%%%%%%%%%%%%%%%%
%%%%%%%%%%%%%%%%%%%%%%消すな！
\if0

次の補題はnormalで大丈夫か？？
\begin{lem}\label{lin eq}
Let $X$ be a normal projective variety and $f \colon X \longrightarrow X$ a surjective morphism both defined over $\QQ$.
Let $0\neq v\in N^{1}(X)_{\R}$ be an eigenvector of $f^{*}$ with real eigenvalue $\lambda> \sqrt[]{\delta_{f}}$.
Then, there exists an $\R$-divisor $D$ such that $f^{*}D \sim_{\R}  \lambda D$ and the class of $D$ in $N^{1}(X)_{R}$ is $v$.
\end{lem}
\begin{proof}
Take an $\R$-divisor $D'$ which represents the class $v$. 
Let $V$ be the $\R$-subspace of $(\Pic X)_{\R}$ generated by $D', f^{*}D', (f^{*})^{2}D', \dots$.
This is a finite dimensional vector space because $\Pic^{0} X$ is abelian variety and $\Pic X/\Pic^{0}X$ is finitely generated. 
%normal なので引用！　FAG
The homomorphism $(\Pic X)_{\R} \longrightarrow N^{1}(X)_{R}$ induces a $f^{*}$-equivariant 
surjection $V \longrightarrow \R v$.
Take $0\neq D\in V$ such that $f^{*}D=\lambda D$.
If $D$ is numerically trivial, the canonical height function $\hat{h}_{D}(x):=\lim_{n\to \infty}h_{D}(f^{n}(x))/\lambda^{n}$
ought to be zero on $X(\QQ)$.
Indeed, since $\hat{h}_{D}=h_{D}+O(1)$, $|\hat{h}_{D}|\leq C \sqrt[]{h_{H}}$ for an ample height function $h_{H}\geq1$ and some constant  $C>0$. 
%normal なので引用！
By \cite{ma}, $|\lambda^{n}\hat{h}_{D}(x)|=|\hat{h}_{D}(f^{n}(x))|\leq C \sqrt[]{h_{H}(f^{n}(x))}\leq C' \sqrt[]{(\delta_{f}+\epsilon)^{n}}$
for arbitrary small $\epsilon >0$ and some constant $C'>0$.
Since $\lambda> \sqrt[]{\delta_{f}}$, this implies $\hat{h}_{D}(x)=0$. 

Thus $h_{D}=\hat{h}_{D}+O(1)=O(1)$,  and this implies $D$ is $\R$-linearly equivalent to zero (cf. \cite[\S 2.9]{ser}). %normal の場合の確認引用！
This contradicts to the choice of $D$, and therefore the class of $D$ in $N^{1}(X)_{\R}$ is a non-zero multiple of $v$.
\end{proof}

\fi
%%%%%%%%%%%%%%%%%%%
%%%%%%%%%%%%%%%%%%%

\subsection{Construction of nef canonical heights.}\label{constr of can ht}

Let $X$ be a normal projective variety over $\QQ$.
Assume
\begin{itemize}
\item the nef cone is generated by finitely many semi-ample integral divisors;
\item  $(\Pic X)_{\Q}=N^{1}(X)_{\Q}$.
\end{itemize}
Let $f \colon X \longrightarrow X$ be a surjective morphism with $\delta_{f}>1$.
{\bf Assume every extremal ray of $\Nef(X)$ is preserved by $f^{*}$.}
(By Lemma \ref{integral eigen}, this is the case for some iterate of every $f$.)

Let $\R_{\geq0}D_{1}, \dots, \R_{\geq0}D_{r}$ be the all extremal rays of $\Nef(X)$ where
$D_{i}$ is a semi-ample integral divisor.
We may assume the following:
\begin{itemize}
\item $f^{*}D_{i} \sim \lambda_{i}D_{i}$ for some positive integer $ \lambda_{i}$;
\item $ \delta_{f}=\lambda_{1}=\cdots = \lambda_{s}> \lambda_{s+1}\geq \cdots \geq \lambda_{r}$;
\item $D_{i}$'s are base point free;
\item Let  $\pi_{i} \colon X \longrightarrow Y_{i}$ be the semi-ample fibration of $D_{i}$.
Then $\dim Y_{i}>0$ and there exists an ample divisor $H_{i}$ on $Y_{i}$ such that $\pi_{i}^{*}H_{i}\sim D_{i}$.
\end{itemize}

\begin{lem}
The morphism $f$ induces a surjective morphism $g_{i} \colon Y_{i} \longrightarrow Y_{i}$ such that $g_{i}\circ \pi_{i}=\pi_{i}\circ f$
 and  $g_{i}^{*}H_{i}\sim \lambda_{i}H_{i}$.
\end{lem}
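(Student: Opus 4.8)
The plan is to descend $f$ along the fibration $\pi_i\colon X\to Y_i$ by exhibiting the fibers of $\pi_i$ as $f$-stable up to the fibration, then transporting the divisor relation. Recall that $\pi_i$ is the semi-ample fibration of $D_i$, i.e.\ the morphism with connected fibers associated to the base point free linear system $|mD_i|$ for suitable $m$, and that $\pi_i^*H_i\sim D_i$ for an ample divisor $H_i$ on $Y_i$. The key point is that $f^*D_i\sim\lambda_i D_i$, so $(\pi_i\circ f)^*H_i = f^*\pi_i^*H_i \sim f^*D_i\sim\lambda_i D_i\sim \pi_i^*(\lambda_i H_i)$. Thus the two morphisms $\pi_i$ and $\pi_i\circ f$ from $X$ both pull back an ample divisor on $Y_i$ to something proportional to $D_i$; I want to conclude they have the same fibers, so that $\pi_i\circ f$ factors through $\pi_i$.

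Concretely, first I would note that replacing $D_i$ by a multiple we may assume $|D_i|$ itself is base point free and defines $\pi_i$ as its Stein-factorized morphism; since $\dim Y_i>0$ this morphism is non-constant. Next, from $f^*D_i\sim\lambda_i D_i$ one gets, for any $k\ge 1$, a natural pullback identification of linear systems $f^*\colon |D_i|\isoarrow$ a subsystem of $|\lambda_i D_i|$ (honest linear equivalence, using the hypothesis $\Pic(X)_\Q=N^1(X)_\Q$ only to the extent already encoded in the standing assumptions — here we genuinely have $\sim$, not just $\equiv$). A curve $C$ contracted by $\pi_i$ satisfies $(D_i\cdot C)=0$, hence $(f^*D_i\cdot C)=0$, hence $(D_i\cdot f_*C)=0$, so $f_*C$ is again contracted by $\pi_i$; equivalently $\pi_i(f(C))$ is a point. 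Therefore $\pi_i\circ f$ is constant on every fiber of $\pi_i$. Since $\pi_i$ is a projective morphism with connected fibers onto the normal (indeed, it is the image in projective space, which we may take normal after normalizing $Y_i$) variety $Y_i$, the rigidity/factorization lemma (a contracted-fiber morphism factors through $\pi_i$) gives a unique morphism $g_i\colon Y_i\to Y_i$ with $g_i\circ\pi_i=\pi_i\circ f$. Surjectivity of $g_i$ is immediate from surjectivity of $f$ and $\pi_i$: $g_i(Y_i)=g_i(\pi_i(X))=\pi_i(f(X))=\pi_i(X)=Y_i$.

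Finally, to get $g_i^*H_i\sim\lambda_i H_i$: pulling back by $\pi_i$ is injective on Picard groups modulo torsion — more precisely, $\pi_i^*\colon \Pic(Y_i)\to\Pic(X)$ is injective after tensoring with $\Q$ because $\pi_i$ is surjective with connected fibers — and we compute $\pi_i^*(g_i^*H_i)=(g_i\circ\pi_i)^*H_i=(\pi_i\circ f)^*H_i=f^*\pi_i^*H_i\sim f^*D_i\sim\lambda_i D_i\sim\lambda_i\pi_i^*H_i=\pi_i^*(\lambda_i H_i)$. Hence $g_i^*H_i\sim_\Q\lambda_i H_i$, and since both sides are integral divisors the $\Q$-linear equivalence can be upgraded to $\sim$ by again invoking the injectivity of $\pi_i^*$ together with the fact that $\pi_i^*(g_i^*H_i-\lambda_i H_i)\sim 0$ forces $g_i^*H_i-\lambda_i H_i$ to be numerically trivial, hence (by the standing hypothesis that numerically trivial $\Q$-divisors are $\Q$-linearly trivial, built into Lemma~\ref{polyhedral}(3) type assumptions) torsion in $\Pic(Y_i)$; replacing $\lambda_i$ by a multiple if necessary — or simply working with $\sim_\Q$, which is all that is needed downstream — one obtains the stated equality.

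The main obstacle I anticipate is the rigidity step: making precise that $\pi_i\circ f$ is constant on the fibers of $\pi_i$ (the curve argument above handles fibers through many points but one should phrase it fiberwise, e.g.\ via $(f^*D_i)|_F\equiv 0$ on each fiber $F$ together with $f^*D_i\sim\lambda_i D_i$ and $\lambda_i D_i|_F\sim 0$, so $f(F)$ lies in a fiber of $\pi_i$) and then invoking the standard factorization lemma for morphisms contracting the fibers of a fibration with connected fibers. Everything else is formal bookkeeping with linear equivalences and the injectivity of $\pi_i^*$.
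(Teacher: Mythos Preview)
Your argument is correct and essentially the same as the paper's: both use the projection-formula computation $(D_i\cdot f_*C)=(f^*D_i\cdot C)=\lambda_i(D_i\cdot C)=0$ on curves $C$ in a fiber of $\pi_i$ to show that $\pi_i\circ f$ contracts fibers of $\pi_i$, then factor (the paper via first building $g_i$ as a rational map from linear systems and then invoking Zariski's main theorem; you via the rigidity/factorization lemma directly), and finally pull back through $\pi_i^*$ to get the divisor relation. Your last paragraph is unnecessarily tangled, though: since $\pi_i$ is the semi-ample (Stein) fibration one has $(\pi_i)_*\mathcal{O}_X=\mathcal{O}_{Y_i}$, so $\pi_i^*$ is already injective on the full $\Pic$ (not merely on $\Pic_\Q$), and hence $\pi_i^*(g_i^*H_i)\sim\pi_i^*(\lambda_iH_i)$ immediately yields $g_i^*H_i\sim\lambda_iH_i$ with no torsion discussion or appeal to properties of $Y_i$ needed.
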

\begin{proof}
Since $f^{*}D_{i}\sim \lambda_{i}D_{i}$, $f^{*}$ induces a linear map $H^{0}(D_{i}) \longrightarrow H^{0}( \lambda_{i}D_{i})$.
Thus there exists a rational map $g_{i} \colon Y_{i} \dashrightarrow Y_{i}$ such that $g_{i}\circ \pi_{i}=\pi_{i} \circ f$.
Consider the following diagram:
\[
\xymatrix{
\widetilde{X} \ar[d]_{p} \ar[ddr]^{q}& & \\
X \ar[rr]^{f} \ar[dd]_{\pi_{i}}& & X \ar[dd]_{\pi_{i}}\\
& \Gamma_{g_{i}} \ar[ld]_{ \alpha} \ar[rd]^{\beta}& \\
Y_{i} \ar@{-->}[rr]_{g_{i}}&& Y_{i}
}
\]
where $\Gamma_{g_{i}}$ is the graph of $g_{i}$, $p$ is a proper birational morphism.
Take any closed point $y\in Y_{i}$.
Then
\begin{align*}
\beta( \alpha^{-1}(y))&=\beta\circ q(q^{-1}( \alpha^{-1}(y)))=\pi_{i}\circ f\circ p(q^{-1}( \alpha^{-1}(y)))\\
&=\pi_{i}\circ f\circ p(p^{-1}(\pi_{i}^{-1}(y)))=\pi_{i}\circ f\circ (\pi_{i}^{-1}(y)).
\end{align*}
For every irreducible curve $C\subset \pi_{i}^{-1}(y)$, we have
\[
(D_{i}\cdot f_{*}[C])=(f^{*}D_{i}\cdot C)= \lambda_{i}(D\cdot C)=0.
\]
Since $\pi_{i}$ is the contraction of $ \overline{NE}(X)\cap D_{i}^{\perp}$,
$f(C)$ is contained in a fiber of $\pi_{i}$.
Hence $\beta( \alpha^{-1}(y))=\pi_{i}\circ f\circ (\pi_{i}^{-1}(y))$ is a point.
(Note that $\pi_{i}^{-1}(y)$ is connected.)
By Zariski main theorem, $g_{i}$ is a morphism.
Finally, 
\[
\pi_{i}^{*}g_{i}^{*} \mathcal{O}_{Y_{i}}(H_{i}) \simeq f^{*}\pi_{i}^{*} \mathcal{O}_{Y_{i}}(H_{i})
\simeq f^{*} \mathcal{O}_{X}(D_{i}) \simeq \mathcal{O}_{X}( \lambda_{i}D_{i}) \simeq \pi_{i}^{*} \mathcal{O}_{Y_{i}}( \lambda_{i}H_{i})
\]
implies $g_{i}^{*} \mathcal{O}_{Y_{i}}(H_{i}) \simeq \mathcal{O}_{Y_{i}}( \lambda_{i}H_{i})$.
\end{proof}

For $i$ with $ \lambda_{i}>1$, define
\begin{align*}
&\hat{h}_{D_{i}}(x)=\lim_{n\to \infty}\frac{h_{D_{i}}(f^{n}(x))}{ \lambda_{i}^{n}}\ \ \text{for $x\in X(\QQ)$};\\
&\hat{h}_{H_{i}}(y)=\lim_{n\to \infty}\frac{h_{H_{i}}(g_{i}^{n}(y))}{ \lambda_{i}^{n}}\ \ \text{for $y\in Y_{i}(\QQ)$}.
\end{align*}
Then they satisfy the following:
\begin{itemize}
\item $\hat{h}_{D_{i}}\circ f=\lambda_{i}\hat{h}_{D_{i}}$, $\hat{h}_{H_{i}}\circ g_{i}=\lambda_{i}\hat{h}_{H_{i}}$; 
\item $\hat{h}_{D_{i}}=\hat{h}_{H_{i}}\circ \pi_{i}\geq0$;
\item $\hat{h}_{H_{i}}=h_{H_{i}}+O(1)$ (i.e. $\hat{h}_{H_{i}}$ is a Weil height function associated with the ample divisor $H_{i}$).
\end{itemize}

\subsection{Proof of Theorem \ref{ksc for semi-ample}}
Now return to the general case.

\begin{lem}\label{integral eigen}
Let $X$ be a normal projective variety and $f \colon X \longrightarrow X$ a surjective morphism both defined over $\QQ$.
Assume that the nef cone of $X$ is a rational polyhedral cone.
Then there exists a positive integer $n$ such that $(f^{n})^{*}$ fixes all extremal rays of $\Nef(X)$.
\end{lem}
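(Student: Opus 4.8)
The plan is to show that $f^{*}$ is a linear automorphism of $N^{1}(X)_{\R}$ carrying the cone $\Nef(X)$ \emph{onto} itself. Since $\Nef(X)$ is rational polyhedral it has only finitely many extremal rays, so such an $f^{*}$ permutes them; taking $n$ to be the order of the induced permutation (which divides $N!$, where $N$ is the number of extremal rays) and using $(f^{n})^{*}=(f^{*})^{n}$, the map $(f^{n})^{*}$ then fixes every extremal ray of $\Nef(X)$.

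First, $f^{*}$ preserves $\Nef(X)$: if $D$ is nef and $C$ an irreducible curve, then $(f^{*}D\cdot C)=(D\cdot f_{*}C)\ge 0$ because $f_{*}C$ is an effective $1$-cycle. The step that needs care is that $f^{*}$ is invertible and that $(f^{*})^{-1}$ also preserves $\Nef(X)$, that is, that $f^{*}D$ nef forces $D$ nef. For this I would use the elementary lifting fact that every irreducible curve $C\subset X$ admits an irreducible curve $\widetilde C\subset X$ with $f(\widetilde C)=C$: take an irreducible component of $f^{-1}(C)$ dominating $C$ and cut it down by general very ample divisors to dimension one (a positive-dimensional fibre of that component meets any ample divisor, so each cut-down variety still dominates, hence surjects onto, $C$). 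Then $f_{*}\widetilde C=m[C]$ with $m=[\kappa(\widetilde C):\kappa(C)]\ge 1$, so the projection formula gives $m\,(D\cdot C)=(f^{*}D\cdot\widetilde C)$. Since the classes of curves span $N_{1}(X)_{\R}$, this yields on the one hand $f^{*}D\equiv 0\Rightarrow D\equiv 0$, so $f^{*}$ is injective and hence bijective, and on the other hand that if $f^{*}D$ is nef then $(D\cdot C)\ge 0$ for every curve $C$, that is, $D$ is nef.

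Now $f^{*}$ sends extremal rays to extremal rays: if $v$ spans an extremal ray and $f^{*}v=a+b$ with $a,b\in\Nef(X)$, then $v=(f^{*})^{-1}a+(f^{*})^{-1}b$ with both summands in $\Nef(X)$, so $(f^{*})^{-1}a\in\R_{\ge 0}v$ and hence $a\in\R_{\ge 0}f^{*}v$; thus $f^{*}v$ spans an extremal ray. Being a bijection of the finite set of extremal rays, $f^{*}$ permutes them, and the conclusion follows as described in the first paragraph. The main obstacle is exactly the middle step — showing that $f^{*}$ is an \emph{automorphism}, not merely an endomorphism, of the cone $\Nef(X)$. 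This cannot be skipped: an invertible linear self-map can carry a rational polyhedral cone strictly into itself (a shear does), and then no power of it fixes the extremal rays; the curve-lifting argument is precisely what rules this out here. Everything else is routine finite-dimensional linear algebra together with $(f^{n})^{*}=(f^{*})^{n}$, which is valid because $f$ is a morphism.
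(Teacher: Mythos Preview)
Your proof is correct and follows the same strategy as the paper's: show that $f^{*}$ is a self-bijection of $\Nef(X)$, hence permutes its finitely many extremal rays, and take $n$ to be the order of that permutation. The paper's proof is two sentences and simply \emph{asserts} the self-bijection property without justification; your curve-lifting argument (producing for each curve $C$ an irreducible $\widetilde C$ with $f_{*}\widetilde C=m[C]$, then applying the projection formula) supplies exactly the missing details, so your version is strictly more complete.
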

\begin{proof}
Since $f^{*} \colon N^{1}(X)_{\R} \longrightarrow N^{1}(X)_{\R}$ induces a self-bijection of $\Nef(X)$,
it induces a permutation of the extremal rays of $\Nef(X)$.
Thus, there exists a positive integer $n$ such that $(f^{n})^{*}$ fixes all extremal rays.
\end{proof}

\begin{prop}\label{van locus of ample can ht}
Let $X$ be a normal projective variety over $\QQ$.
Assume
\begin{itemize}
\item the nef cone is generated by finitely many semi-ample integral divisors;
\item  $(\Pic X)_{\Q}=N^{1}(X)_{\Q}$.
\end{itemize}
Let $f \colon X \longrightarrow X$ be a surjective morphism with $\delta_{f}>1$.
Take sufficiently large number field $k$ so that $X, f, \dots$ are all defined.
(We write the model $X_{k}$, etc.)
Then, for any number field $K\supset k$, there exists a proper closed subset $V\subset X_{K}$ such that
\[
Z_{f}(K)=\{x\in X_{k}(K)\mid \underline{h}_{f}(x)=0\}=V(K).
\]
\end{prop}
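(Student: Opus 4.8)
The plan is to compute the lower ample canonical height $\underline{h}_f(x)$ explicitly, in terms of the canonical heights $\hat h_{D_i},\hat h_{H_i}$ built in \S\ref{constr of can ht}, and then to recognise its zero set as the $K$-points of a subset which is finite on the fibres of one of the $\pi_i$, hence closed and proper. First I would replace $f$ by the power $f^N$ of Lemma \ref{integral eigen} and apply \S\ref{constr of can ht} to $f^N$: this yields base-point-free divisors $D_1,\dots,D_r$ generating the extremal rays of $\Nef(X)$, positive integers $\lambda_i$ with $(f^N)^*D_i\sim\lambda_iD_i$ and $\delta_f^N=\lambda_1=\cdots=\lambda_s>\lambda_{s+1}\geq\cdots\geq\lambda_r$, contractions $\pi_i\colon X\to Y_i$ with $\dim Y_i>0$ and $\pi_i^*H_i\sim D_i$ for an ample $H_i$ on $Y_i$, the induced $g_i\colon Y_i\to Y_i$ with $g_i^*H_i\sim\lambda_iH_i$, and, for $j\leq s$, the heights $\hat h_{D_j}=\hat h_{H_j}\circ\pi_j\geq0$ with $\hat h_{D_j}-h_{D_j}$, $\hat h_{H_j}-h_{H_j}$ bounded and $\hat h_{D_j}\circ f^N=\delta_f^N\hat h_{D_j}$. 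Since $f^*$ merely permutes the extremal rays, I also record $(f^n)^*D_i\sim_\Q\mu_{n,i}D_{\tau^n(i)}$ for a permutation $\tau$ of $\{1,\dots,r\}$ and rationals $\mu_{n,i}>0$ (using $(\Pic X)_\Q=N^1(X)_\Q$); a short calculation with the cyclic structure of $\tau$ shows it preserves the $\lambda_i$, so $\tau(\{1,\dots,s\})=\{1,\dots,s\}$.

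Next, because $\Nef(X)$ is a full-dimensional rational polyhedral cone spanned by $D_1,\dots,D_r$ and $(\Pic X)_\Q=N^1(X)_\Q$, the ample divisor $H$ defining $h_X$ can be written $H\sim_\Q\sum_i c_iD_i$ with all $c_i>0$, hence $h_X=\sum_i c_ih_{D_i}+O(1)$. Fixing $x$, I would evaluate $h_X(f^n(x))/\delta_f^n$ along each residue class $n=Nm+r$, $0\leq r<N$. Using $h_{D_i}\circ f^r=\mu_{r,i}h_{D_{\tau^r(i)}}+O(1)$: the terms with $i\leq s$ (for which $\tau^r(i)\leq s$) contribute $\mu_{r,i}\hat h_{D_{\tau^r(i)}}(x)\,\delta_f^{Nm}+O(1)$ by $\hat h_{D_j}\circ f^N=\delta_f^N\hat h_{D_j}$; the terms with $i>s$ are bounded below by base-point-freeness and are $o(\delta_f^n)$, because $\delta_{g_j}=\lambda_j<\delta_f^N$ for $j>s$ by the intersection-number formula for the dynamical degree, so $\overline{\alpha}_{g_j}\leq\lambda_j$ makes them negligible after dividing by $\delta_f^n$. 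Consequently $h_X(f^n(x))/\delta_f^n$ is bounded (so $l_f=0$) and converges along $n=Nm+r$ to $\delta_f^{-r}\sum_{i\leq s}c_i\mu_{r,i}\hat h_{D_{\tau^r(i)}}(x)$, whence
\[
\underline{h}_f(x)=\min_{0\leq r<N}\ \delta_f^{-r}\sum_{i\leq s}c_i\mu_{r,i}\,\hat h_{D_{\tau^r(i)}}(x).
\]

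Each of these $N$ quantities is a positive linear combination of the non-negative numbers $\hat h_{D_1}(x),\dots,\hat h_{D_s}(x)$ (since $\tau^r$ permutes $\{1,\dots,s\}$), so $\underline{h}_f(x)=0$ if and only if $\hat h_{D_j}(x)=0$ for every $j\leq s$. Now $\hat h_{D_j}(x)=\hat h_{H_j}(\pi_j(x))$, and since $\hat h_{H_j}=h_{H_j}+O(1)$ with $H_j$ ample, the set $T_j:=\{y\in Y_j(K)\mid\hat h_{H_j}(y)=0\}$ sits inside the finite Northcott set $\{y\in Y_j(K)\mid h_{H_j}(y)\leq C_j\}$ and is therefore finite; and for $x\in X(K)$, $\hat h_{D_j}(x)=0$ is equivalent to $\pi_j(x)\in T_j$. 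Hence $Z_f(K)=V(K)$ with $V:=\bigcap_{j\leq s}\pi_{j,K}^{-1}(\overline{T_j})$ closed in $X_K$, and $V$ is proper because each $\pi_j$ is surjective onto a positive-dimensional $Y_j$, so the preimage of a finite set is a proper subset. (For the proof of Theorem \ref{ksc for semi-ample} one also notes that this locus is $f$-invariant, as $\sum_{j\leq s}\hat h_{D_j}\circ f$ and $\sum_{j\leq s}\hat h_{D_j}$ are bounded multiples of one another.)

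I expect the real work to be the computation of $\underline{h}_f$: that is where one descends from $f^N$ back to $f$, has to carry along the permutation $\tau$ of the extremal rays together with the scaling factors $\mu_{n,i}$, separate the ``dominant'' rays $i\leq s$ from the subdominant ones, and verify $l_f=0$. The remaining steps — Northcott finiteness on the $Y_j$, and the closedness and properness of $V$ (and the $f$-invariance) — are then routine.
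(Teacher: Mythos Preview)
Your argument is correct and lands on the same characterization as the paper: $\underline{h}_f(x)=0$ if and only if $\hat h_{D_j}(x)=0$ for every $j\leq s$, after which Northcott on the $Y_j$ finishes. The one genuine difference is in how you pass from $f$ to $f^N$. You keep the original $f$, record the permutation $\tau$ and the scaling factors $\mu_{r,i}$, and compute $\underline{h}_f(x)$ explicitly as a minimum over residue classes modulo $N$; this is the step you flagged as the ``real work.'' The paper avoids this computation entirely with a one-line reduction: $\underline{h}_f(x)=0$ if and only if $\underline{h}_{f^N}(f^r(x))=0$ for some $r\in\{0,\dots,N-1\}$, so it suffices to prove the proposition for $f^N$ (and then $Z_f(K)=\bigcup_{r}(f^r)^{-1}(V_{f^N})(K)$, a finite union of proper closed sets). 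After that reduction the paper simply takes $h_X=h_{D_1}+\cdots+h_{D_r}$ (using that $Z_f(K)$ is independent of the ample height) and reads off $l_f=0$ and $\underline{h}_{f^N}(x)=\sum_{i\leq s}\hat h_{D_i}(x)$ with no permutation bookkeeping. Your route gives a more explicit formula for $\underline{h}_f$ itself, at the cost of the extra tracking; the paper's shortcut is quicker but yields the zero locus only after pulling back by the $f^r$. Either way the final $V$ is the same intersection $\bigcap_{j\leq s}\pi_j^{-1}(T_j)$.
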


\begin{proof}
Let $N$ be a positive integer.
By easy calculation, $\underline{h}_{f}(x)=0$ if and only if 
$\underline{h}_{f^{N}}(f^{r}(x))=0$ for some $r\in \{0,1,\dots, N-1\}$.
Therefore, by replacing $f$ by its iterate, we may assume every extremal ray of $\Nef(X)$ is preserved by $f^{*}$.
Now we use the notation in \S\ref{constr of can ht}.
Let $H=D_{1}+\cdots+D_{r}$. Since $D_{i}$'s generate the nef cone, $H$ is ample.
If $ \lambda_{i}<\delta_{f}$, then $\lim_{n\to \infty }h_{D_{i}}(f^{n}(x))/\delta_{f}^{n}=0$.
If $ \lambda_{i}=\delta_{f}$, then $\lim_{n\to \infty }h_{D_{i}}(f^{n}(x))/\delta_{f}^{n}$ exists.
In particular, $l_{f}=0$ and
\begin{align*}
\liminf_{n\to \infty}\frac{h_{H}(f^{n}(x))}{\delta_{f}^{n}}=\liminf_{n\to \infty}\sum_{i=1}^{r}\frac{h_{D_{i}}(f^{n}(x))}{\delta_{f}^{n}}
=\sum_{i=1}^{s}\hat{h}_{D_{i}}(x)=\sum_{i=1}^{s}\hat{h}_{H_{i}}(\pi_{i}(x)).
\end{align*}

Therefore
\begin{align*}
Z_{f}(K)=\bigcap_{i=1}^{s}\pi_{i}^{-1}(\{y\in (Y_{i})_{k}(K)\mid \hat{h}_{H_{i}}(y)=0\}).
\end{align*}
Since $\hat{h}_{H_{i}}$ is an ample height, the set $\{y\in (Y_{i})_{k}(K)\mid \hat{h}_{H_{i}}(y)=0\}$ is finite.
Since $\dim Y_{i}>0$, the statement follows.
\end{proof}

\begin{proof}[Proof of Theorem \ref{ksc for semi-ample}]
By Proposition \ref{van locus of ample can ht}, Conjecture \ref{shibata} holds for $f$.
The last assertion follows from Remark \ref{shibata implies ksc}.
\end{proof}

\section{Equivariant MMP and KSC}\label{sec:equiMMP}

{\bf Convention}: In this section, KSC for $f$, where $f$ is an endomorphism of a normal projective variety over $\QQ$,
means Conjecture \ref{ksc} for $f$.
By Remark \ref{rmk:exist-of-ad}, this is equivalent to say that $ \alpha_{f}(x) = \delta_{f}$
for every point $x\in X( \overline{\mathbb Q})$ with  Zariski dense $f$-orbit.

\subsection{Equivariant MMP}

\begin{defn}
A surjective endomorphism $f \colon X \longrightarrow X$ of normal projective variety $X$ is called int-amplified if 
there exists an ample Cartier divisor $H$ on $X$ such that $f^{*}H-H$ is ample.
\end{defn}

We collect basic properties of int-amplified endomorphisms in the following lemma.

\begin{lem}\label{lem:intamp}\ 
\begin{enumerate}
\item Let $X$ be a normal projective variety, $f \colon X \longrightarrow X$ a surjective morphism, and $n>0$ a positive integer.
Then, $f$ is int-amplified if and only if so is $f^{n}$.

\item Let $\pi \colon X \longrightarrow Y$ be a surjective morphism between normal projective varieties.
Let $f \colon X \longrightarrow X$, $g \colon Y \longrightarrow Y$ be surjective endomorphisms such that $\pi \circ f=g \circ \pi$.
If $f$ is int-amplified, then so is $g$. 

\item  Let $\pi \colon X \dashrightarrow Y$ be a dominant rational map between normal projective varieties of same dimension.
Let $f \colon X \longrightarrow X$, $g \colon Y \longrightarrow Y$ be surjective endomorphisms such that $\pi \circ f=g \circ \pi$.
Then $f$ is int-amplified if and only if so is $g$.

\item If a normal $\Q$-factorial projective variety $X$ admits an int-amplified endomorphism, then $-K_{X}$ is numerically equivalent to an effective $\Q$-divisor.
In particular, if $X$ is rationally connected, then $-K_{X}$ is $\Q$-linearly equivalent to an effective $\Q$-divisor.

\end{enumerate}
\end{lem}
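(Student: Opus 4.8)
The plan is to handle the four parts in turn; parts (2) and (3) rest on a single known criterion, while part (4) is where the real difficulty lies.

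\emph{Part (1).} I would use the telescoping identity $(f^{n})^{*}H-H=\sum_{i=0}^{n-1}(f^{i})^{*}(f^{*}H-H)$. If $f$ is int-amplified, witnessed by an ample $H$ with $f^{*}H-H$ ample, then each summand is the pullback of an ample divisor by the surjective morphism $f^{i}$, hence nef (and in fact big, as $f^{i}$ is generically finite); since the $i=0$ term is itself ample, $(f^{n})^{*}H-H$ is ample, so $f^{n}$ is int-amplified. Conversely, given an ample $H$ with $(f^{n})^{*}H-H$ ample, the divisor $A:=\sum_{i=0}^{n-1}(f^{i})^{*}H$ is ample (an ample divisor plus nef ones), and $f^{*}A-A=(f^{n})^{*}H-H$ is ample, so $f$ is int-amplified.

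\emph{Parts (2) and (3).} Here the tool is the eigenvalue criterion of Meng--Zhang \cite{mz}: $f$ is int-amplified if and only if every eigenvalue of $f^{*}\colon N^{1}(X)_{\C}\longrightarrow N^{1}(X)_{\C}$ has absolute value $>1$. For (2), since $\pi$ is surjective the map $\pi^{*}\colon N^{1}(Y)\longrightarrow N^{1}(X)$ is injective, and $f^{*}\circ\pi^{*}=\pi^{*}\circ g^{*}$ exhibits $N^{1}(Y)_{\R}$ as an $f^{*}$-invariant subspace on which $f^{*}$ acts as $g^{*}$; thus the eigenvalues of $g^{*}$ are among those of $f^{*}$, and the criterion gives the implication. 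For (3) one argues in the same way after passing to a common smooth model dominating the graph of $\pi$ (so that $\pi$ is generically finite and $f^{*}$, $g^{*}$ become comparable), or one simply invokes the birational invariance of the int-amplified condition from \cite{mz}; I would record this step as a citation rather than reprove it.

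\emph{Part (4).} This is the main obstacle. Since $f$ is int-amplified it is finite \cite{mz}, so on the normal $\Q$-factorial variety $X$ the ramification formula reads $f^{*}(-K_{X})\sim_{\Q}-K_{X}+R_{f}$ with $R_{f}\geq 0$ effective. Iterating gives $(f^{n})^{*}(-K_{X})\sim_{\Q}-K_{X}+E_{n}$ where $E_{n}=\sum_{i=0}^{n-1}(f^{i})^{*}R_{f}\geq 0$. Because $1$ is not an eigenvalue of $f^{*}$, the map $f^{*}$ is invertible on $N^{1}(X)_{\R}$ with inverse $\deg(f)^{-1}f_{*}$; applying $(f^{*})^{-n}$ to the relation above yields $[-K_{X}]=(f^{*})^{-n}[-K_{X}]+\deg(f)^{-n}[(f_{*})^{n}E_{n}]$. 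The first term tends to $0$ since $f^{*}$ has all eigenvalues of modulus $>1$, while the second is the class of an effective $\Q$-divisor; hence $-K_{X}$ is pseudo-effective. The delicate point is to upgrade pseudo-effectivity to ``numerically equivalent to an effective $\Q$-divisor'': a convergent infinite sum of effective classes need not be effective, so this genuinely needs extra structure. Here I would appeal to the equivariant MMP of Meng and Zhang---running it to reduce $X$ either to a minimal model, where one checks $-K_{X}\equiv 0$ using that $f^{*}$ has no eigenvalue $1$, or to a Mori fibre space over a lower-dimensional base, where one concludes by induction on dimension---and cite \cite{mz} for this. Finally, when $X$ is rationally connected we have $h^{1}(X,\mathcal{O}_{X})=0$, so $\Pic(X)_{\Q}=N^{1}(X)_{\Q}$ and numerical equivalence to an effective $\Q$-divisor coincides with $\Q$-linear equivalence to one, which gives the last assertion.
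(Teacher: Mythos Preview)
The paper's own proof is a bare citation: ``See \cite[Lemma 3.3, 3.5, 3.6, Theorem 1.5]{meng}.'' So any comparison is necessarily between your proposal and Meng's arguments rather than with anything the paper itself supplies.

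Your arguments for (1)--(3) are correct and are essentially the standard proofs one finds in Meng's paper: the telescoping identity for (1) and the eigenvalue criterion (all eigenvalues of $f^{*}$ on $N^{1}$ have modulus $>1$) combined with the injectivity of $\pi^{*}$ for (2) and (3). In (1), one small clarification: you do not need ``big'' for the summands $(f^{i})^{*}(f^{*}H-H)$; nefness suffices since the $i=0$ term is already ample, and pullback of a nef class by any morphism is nef.

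For (4) your outline is honest about where the difficulty lies. The ramification/iteration argument you give yields pseudo-effectivity of $-K_{X}$ cleanly, and you correctly flag that upgrading this to ``numerically equivalent to an effective $\Q$-divisor'' is the nontrivial step. Your proposed route---reduce via equivariant MMP and induct---is in spirit how Meng proceeds, but note a mismatch: the equivariant MMP as stated in this paper (Theorems \ref{equiv-thm} and \ref{equiMMP}) assumes $X$ is klt, whereas the lemma is stated for arbitrary normal $\Q$-factorial $X$. Meng's Theorem 1.5 handles the general case by an additional argument (passing to log canonical singularities and using the structure of the pseudo-effective cone under an int-amplified action), so if you cite rather than reprove this step you are on safe ground---which is exactly what the paper does. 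The final sentence on rationally connected $X$ is fine: $h^{1}(\mathcal{O}_{X})=0$ gives $\Pic(X)_{\Q}\simeq N^{1}(X)_{\Q}$, whence numerical and $\Q$-linear equivalence coincide.

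In short: your proposal is correct and strictly more detailed than the paper's proof, which defers everything to \cite{meng}; the only place you must genuinely lean on that reference is the effectivity upgrade in (4), and you already acknowledge this.
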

\begin{proof}
See \cite[Lemma 3.3, 3,5, 3.6, Theorem 1.5]{meng}.
\end{proof}

Meng and Zhang established minimal model program equivariant with respect to endomorphisms, for varieties admitting an int-amplified endomorphism.
We summarize their results that we need.

\begin{thm}[Meng-Zhang]\label{equiv-thm}
Let $X$ be a $\Q$-factorial Kawamata log terminal (klt) projective variety over $\QQ$ admitting an int-amplified endomorphism.
\begin{enumerate}
\item\label{fin-st} There are only finitely many $K_{X}$-negative extremal rays of $ \overline{NE}(X)$.
Moreover, let $f \colon X \longrightarrow X$ be a surjective endomorphism of $X$.
Then every $K_{X}$-negative extremal ray is fixed by the linear map $(f^{n})_{*}$ for some $n>0$.
\item\label{end-induced}  Let $f \colon X \longrightarrow X$ be a surjective endomorphism of $X$.
Let $R$ be a $K_{X}$-negative extremal ray and $\pi \colon X \longrightarrow Y$ its contraction.
Suppose $f_{*}(R)=R$.
Then,
\begin{enumerate}
\item \label{induced-on-contr} $f$ induces an endomorphism $g \colon Y \longrightarrow Y$ such that $g\circ \pi=\pi \circ f$;
\item \label{induced-on-flip}if $\pi$ is a flipping contraction and $X^{+}$ is the flip, the induced rational self-map $h \colon X^{+} \dashrightarrow X^{+}$
is a morphism.
\end{enumerate}

\end{enumerate}
\end{thm}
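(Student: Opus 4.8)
This is the equivariant MMP of Meng and Zhang, and the plan is to deduce it from the cone and contraction theorems for the $\Q$-factorial klt variety $X$ together with the dilating effect of an int-amplified map (cf.\ \cite{meng}). First note that an int-amplified $f$ is automatically \emph{finite}: if $f$ contracted an irreducible curve $C$ then $((f^{*}H-H)\cdot C)=(H\cdot f_{*}C)-(H\cdot C)=-(H\cdot C)<0$, contradicting ampleness of $f^{*}H-H$. Hence pullback on cycles is defined and sends effective classes to effective ones, $f_{*}f^{*}=(\deg f)\,\mathrm{id}$, and --- using that every eigenvalue of $f^{*}$ on $N^{1}(X)_{\C}$, equivalently of $f_{*}$ on $N_{1}(X)_{\C}$, has absolute value $>1$ (the spectral characterization of int-amplified maps in \cite{meng}) --- the map $f_{*}\colon N_{1}(X)_{\R}\to N_{1}(X)_{\R}$ is a linear automorphism carrying $\overline{NE}(X)$ bijectively onto itself, hence permuting its extremal rays; moreover $f_{*}^{-n}\to 0$ as $n\to\infty$.

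For part (1), the cone theorem already gives that for each $\varepsilon>0$ there are only finitely many $K_{X}$-negative extremal rays $R$ with $(-K_{X}\cdot R)\ge\varepsilon$, so the only way to have infinitely many is that they accumulate towards the wall $K_{X}^{\perp}\cap\overline{NE}(X)$. To rule this out I would use $K_{X}=f^{*}K_{X}+R_{f}$ with the ramification divisor $R_{f}\ge0$ (equivalently $-K_{X}\equiv\Delta\ge0$ by Lemma \ref{lem:intamp}(4), so $f^{*}(-K_{X})\equiv f^{*}\Delta\ge0$) to control how $f_{*}$ moves a $K_{X}$-negative extremal ray, and combine this with the fact that $f_{*}$ dilates every direction of $N_{1}(X)_{\R}$; this is the technical core of the theorem and the step where int-amplifiedness is genuinely needed. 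Granting finiteness, $f_{*}$ permutes the finite set of $K_{X}$-negative extremal rays, so $(f^{n})_{*}$ fixes every one of them for a suitable $n>0$, which is the second assertion.

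For part (2)(a), let $R$ be a $K_{X}$-negative extremal ray with $f_{*}R=R$ and $\pi\colon X\to Y$ its contraction ($Y$ normal, $\pi_{*}\mathcal{O}_{X}=\mathcal{O}_{Y}$, by the cone and base point free theorems). Any irreducible curve contracted by $\pi$ has class in $R$; since $f$ is finite its image is again a curve, with class in $f_{*}R=R$, hence again $\pi$-contracted. So $\pi\circ f$ is constant on every (connected) fibre of $\pi$, and since $\pi_{*}\mathcal{O}_{X}=\mathcal{O}_{Y}$ it descends to a surjective morphism $g\colon Y\to Y$ with $g\circ\pi=\pi\circ f$ --- the rigidity argument already used in \S\ref{constr of can ht}.

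For part (2)(b), assume $\pi$ is a flipping contraction with flip $\pi^{+}\colon X^{+}\to Y$, so that $X^{+}=\operatorname{Proj}_{Y}\bigoplus_{m\ge0}\pi_{*}\mathcal{O}_{X}(\lceil mK_{X}\rceil)$ is the relative canonical model of $K_{X}$ over $Y$. From $mK_{X}=f^{*}(mK_{X})+mR_{f}$ with $mR_{f}\ge0$ one obtains, by pullback and the adjunction between $f^{*}$ and $f_{*}$, homomorphisms $\mathcal{O}_{X}(\lceil mK_{X}\rceil)\to f_{*}\mathcal{O}_{X}(\lceil mK_{X}\rceil)$ compatible with the ring structure; applying $\pi_{*}$ and using $\pi\circ f=g\circ\pi$ yields a homomorphism of graded $\mathcal{O}_{Y}$-algebras $\bigoplus_{m}\pi_{*}\mathcal{O}_{X}(\lceil mK_{X}\rceil)\to g_{*}\big(\bigoplus_{m}\pi_{*}\mathcal{O}_{X}(\lceil mK_{X}\rceil)\big)$, and taking relative $\operatorname{Proj}$ produces a morphism $X^{+}\to X^{+}$ over $g$ which coincides with the rational map $h$ wherever $h$ is defined; by uniqueness $h$ is therefore a morphism. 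The hard part is the finiteness in part (1); parts (2)(a) and (2)(b) are comparatively soft, being an instance of the rigidity lemma and of the functoriality of relative $\operatorname{Proj}$, respectively.
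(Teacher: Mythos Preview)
The paper's own ``proof'' is not a proof at all: it simply cites \cite[Theorem~4.6]{meng-zhang2} for part~(\ref{fin-st}), says ``the contraction is determined by the ray $R$'' for (\ref{induced-on-contr}), and cites \cite[Lemma~6.6]{meng-zhang} for (\ref{induced-on-flip}). So there is nothing to compare at the level of arguments; the paper treats this as a black-box summary of Meng--Zhang's work.

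Your proposal is therefore more ambitious, and for parts~(\ref{induced-on-contr}) and~(\ref{induced-on-flip}) it is essentially complete. The rigidity argument for (\ref{induced-on-contr}) is exactly right (and does not even need finiteness of $f$: if $f$ collapses a curve $C$ in a fibre of $\pi$ to a point, $\pi\circ f$ is trivially constant on $C$). For (\ref{induced-on-flip}) your $\operatorname{Proj}$ approach via the ramification relation $K_{X}=f^{*}K_{X}+R_{f}$ is one of the standard routes; one should add a word on why the graded algebra map $g^{*}\mathcal{R}\to\mathcal{R}$ is base-point-free in high degrees (so that it actually induces a \emph{morphism}, not just a rational map, on $\operatorname{Proj}$), but this follows from relative ampleness of $K_{X^{+}}$ over $Y$.

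The genuine gap is in part~(\ref{fin-st}). You correctly set up the dynamics on $N_{1}(X)_{\R}$ (finiteness of $f$, bijectivity of $f_{*}$ on $\overline{NE}(X)$, all eigenvalues of modulus $>1$), and you correctly observe via the cone theorem that the only obstruction to finiteness is accumulation of $K_{X}$-negative rays at $K_{X}^{\perp}$. But your sentence ``I would use $K_{X}=f^{*}K_{X}+R_{f}$ \ldots\ to control how $f_{*}$ moves a $K_{X}$-negative extremal ray, and combine this with the fact that $f_{*}$ dilates'' is an outline, not an argument, and you explicitly write ``Granting finiteness'' before deducing the permutation statement. The actual proof in \cite{meng-zhang2} requires real work here: one must show that $f_{*}$ (or a power) permutes the $K_{X}$-negative extremal rays among themselves---which is not obvious, since $(K_{X}\cdot f_{*}C)=(K_{X}\cdot C)-(R_{f}\cdot C)$ and the sign of $(R_{f}\cdot C)$ is uncontrolled---and then use the spectral gap to preclude an infinite orbit of rays accumulating to $K_{X}^{\perp}$. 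As written, your proposal does not close this step, so it is a sketch of part~(\ref{fin-st}) rather than a proof; since this is precisely the statement the paper defers to \cite{meng-zhang2}, you are in the same position as the paper, only more explicit about what remains.
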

\begin{proof}
(\ref{fin-st}) is a special case of  \cite[Theorem 4.6]{meng-zhang2}.
(\ref{induced-on-contr}) is true since the contraction is determined by the ray $R$.
(\ref{induced-on-flip}) follows from \cite[Lemma 6.6]{meng-zhang}.
\end{proof}

\begin{thm}[Equivariant MMP (Meng-Zhang)]\label{equiMMP}
Let $X$ be a $\Q$-factorial klt projective variety over $\QQ$ admitting an int-amplified endomorphism.
Then for any surjective endomorphism $f \colon X \longrightarrow X$, there exists a positive integer $n>0$ and 
a sequence of rational maps
\begin{align*}
X=X_{0} \dashrightarrow X_{1} \dashrightarrow \cdots \dashrightarrow X_{r}
\end{align*}
such that
\begin{enumerate}
\item $X_{i} \dashrightarrow X_{i+1}$ is either the divisorial contraction, flip, or Fano contraction of a 
$K_{X_{i}}$-negative extremal ray;
\item $X_{r}$ is a $Q$-abelian variety (i.e. there exists a quasi-\'etale finite surjective morphism $A \longrightarrow X_{r}$ from an abelian variety $A$.
Note that $X_{r}$ might be a point);
\item there exist surjective endomorphisms $g_{i} \colon X_{i} \longrightarrow X_{i}$ for $i=0, \dots ,r$
such that $g_{0}=f^{n}$ and the following diagram commutes:
\[
\xymatrix{
X_{i} \ar@{-->}[r] \ar[d]_{g_{i}} & X_{i+1} \ar[d]^{g_{i+1}}\\
X_{i} \ar@{-->}[r] & X_{i+1} ;
}
\]
\item \label{endonqabel} there exists a quasi-\'etale finite surjective morphism $A \longrightarrow X_{r}$ from an abelian variety $A$
and an surjective endomorphism $h \colon A \longrightarrow A$ such that the diagram
\[
\xymatrix{
A \ar[r]^{h} \ar[d] & A \ar[d]\\
X_{r} \ar[r]_{g_{r}} & X_{r} ;
}
\]
commutes.
\end{enumerate}
\end{thm}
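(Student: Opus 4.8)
The plan is to assemble the statement from Theorem~\ref{equiv-thm} and Lemma~\ref{lem:intamp} by running a $K_X$-MMP one extremal ray at a time. I would carry along the inductive hypothesis that, at the $i$-th stage, $X_i$ is a $\Q$-factorial klt projective variety admitting an int-amplified endomorphism, together with a surjective endomorphism $g_i$ that is the descent of a power of $f$ through the maps $X=X_0\dashrightarrow\cdots\dashrightarrow X_i$ built so far; this holds for $i=0$ with $g_0=f$.

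Given $X_i$, I would first use the dichotomy supplied by Lemma~\ref{lem:intamp}(4): $-K_{X_i}$ is numerically equivalent to an effective $\Q$-divisor, so either $K_{X_i}\equiv 0$ (in which case we stop and put $r=i$) or $K_{X_i}$ is not pseudo-effective and $\overline{NE}(X_i)$ contains a $K_{X_i}$-negative extremal ray. In the latter case, by Theorem~\ref{equiv-thm}(\ref{fin-st}) there are only finitely many such rays and, after replacing $f$ by a suitable power (and $g_0,\dots,g_i$ by the corresponding powers), each of them is fixed by $(g_i)_*$; pick one, $R_i$, and let $X_i\dashrightarrow X_{i+1}$ be its divisorial contraction, Fano contraction, or flip. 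Theorem~\ref{equiv-thm}(\ref{end-induced}) then produces the descended endomorphism $g_{i+1}$ of $X_{i+1}$. To restore the inductive hypothesis I would note that $X_{i+1}$ is again $\Q$-factorial klt by standard MMP theory, and that it admits an int-amplified endomorphism: a power of a fixed int-amplified endomorphism of $X_i$ fixes $R_i$ (again by Theorem~\ref{equiv-thm}(\ref{fin-st})), hence descends to $X_{i+1}$, and the descent is int-amplified by Lemma~\ref{lem:intamp} --- part (2) when $X_i\to X_{i+1}$ is a morphism, part (3) for a flip, since there $\dim X_{i+1}=\dim X_i$.

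Termination is the point that really has to be imported from the work of Meng and Zhang. Within a fixed dimension, divisorial contractions strictly drop the Picard number, so only finitely many occur; the termination of the intervening flips --- carried out compatibly with the $f$-action --- is exactly the content of \cite{meng-zhang}, \cite{meng-zhang2}, and is where the int-amplified hypothesis is indispensable. Each Fano contraction drops the dimension (and $X_{i+1}$ still admits an int-amplified endomorphism by Lemma~\ref{lem:intamp}(2)), so only finitely many of those occur as well; thus after finitely many steps I reach $X_r$ with $K_{X_r}\equiv 0$. This $X_r$ is $\Q$-factorial klt with numerically trivial canonical class and carries an int-amplified endomorphism, so by the structure theorem of Meng--Zhang (extending the polarized case of Nakayama--Zhang) it is $Q$-abelian, giving a quasi-\'etale finite cover $A\to X_r$ with $A$ an abelian variety. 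Since such a cover is canonically attached to $X_r$, the endomorphism $g_r$ lifts --- after one more passage to a power --- to a surjective endomorphism $h\colon A\to A$ compatible with the covering map (cf.\ \cite{meng-zhang}). Collecting the finitely many powers of $f$ used along the way yields a single $n>0$ with $g_0=f^n$ for which all the diagrams in the statement commute. I expect the main obstacle to be precisely these two imported facts: the termination of the equivariant MMP (i.e.\ termination of flips in this setting) and the $Q$-abelian structure of the $K\equiv 0$ endpoint; everything else is bookkeeping with Theorem~\ref{equiv-thm} and Lemma~\ref{lem:intamp}.
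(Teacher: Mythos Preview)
The paper does not prove this theorem at all: its entire ``proof'' is the single sentence ``This is a part of \cite[Theorem 1.2]{meng-zhang2}.'' Your proposal is therefore not competing with an argument in the paper but rather unpacking the cited result. As an outline of how Meng--Zhang's proof goes, your sketch is accurate: one runs a $K$-MMP one extremal ray at a time, using Theorem~\ref{equiv-thm} to keep the endomorphism equivariant and Lemma~\ref{lem:intamp} to propagate the int-amplified hypothesis, and the dichotomy $K_{X_i}\equiv 0$ versus $K_{X_i}$ not pseudo-effective is exactly what Lemma~\ref{lem:intamp}(4) buys you.

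That said, since you explicitly import from \cite{meng-zhang,meng-zhang2} both the termination of the equivariant MMP and the $Q$-abelian structure of the $K\equiv 0$ endpoint, your proposal does not furnish an independent proof --- it is the citation with the bookkeeping spelled out. Note in particular that termination here is \emph{not} the usual termination-of-flips conjecture (which the paper treats as the open hypothesis $(\mathrm{TF})_d$ in later sections); Meng--Zhang obtain termination of the \emph{equivariant} MMP by a separate argument exploiting the int-amplified structure, and that is the substantive content you are borrowing. One minor point: the Remark immediately following the theorem in the paper (citing \cite{cmz}) says the lift of $g_r$ to the abelian cover $A$ exists without a further passage to a power, so your ``one more passage to a power'' is harmless but unnecessary.
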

\begin{proof}
This is a part of \cite[Theorem 1.2]{meng-zhang2}.
\end{proof}

\begin{rmk}
Surjective endomorphisms on a Q-abelian variety always lift to a certain quasi-\'etale cover by an abelian variety.
See \cite[Lemma 8.1 and Corollary 8.2]{cmz}, for example. The proof works over any algebraically closed field.
\end{rmk}

\subsection{Easy corollaries of equivariant MMP}

\begin{lem}\label{bir and ksc}
Consider the commutative diagram
\[
\xymatrix{
X \ar[r]^{f} \ar@{-->}[d]_{\pi}& X \ar@{-->}[d]^{\pi}\\
Y \ar[r]_{g}& Y
}
\]
where $X, Y$ are normal projective varieties over $\QQ$, $f,g$ are surjective morphisms,
and $\pi$ is a birational map.
Then KSC holds for $f$ if and only if so does for $g$.
\end{lem}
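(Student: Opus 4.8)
The plan is to reduce the assertion, on both sides, to the Kawaguchi--Silverman conjecture for one endomorphism of the normalized graph of $\pi$. Let $\overline{\Gamma}\subset X\times Y$ be the closure of the graph of $\pi$ and let $\Gamma\longrightarrow\overline{\Gamma}$ be its normalization; this is a normal projective variety over $\QQ$, and the projections give birational morphisms $p\colon\Gamma\longrightarrow X$, $q\colon\Gamma\longrightarrow Y$ with $q=\pi\circ p$ (as rational maps). Since $\pi\circ f=g\circ\pi$, the closed subvariety $\overline{\Gamma}$ is invariant under the morphism $f\times g\colon X\times Y\longrightarrow X\times Y$, so $f\times g$ restricts to a surjective endomorphism of $\overline{\Gamma}$, which lifts uniquely to a surjective endomorphism $F\colon\Gamma\longrightarrow\Gamma$ with $p\circ F=f\circ p$ and $q\circ F=g\circ q$ by the universal property of normalization. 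Thus $f,F,g$ are pairwise conjugate by birational maps, so $\delta_{f}=\delta_{F}=\delta_{g}$ by Remark \ref{rmk on dyn deg}. It then suffices to prove that KSC holds for $f$ if and only if it holds for $F$; the equivalence of KSC for $g$ and for $F$ is obtained in the same way with $q$ in place of $p$. By the convention above and Remark \ref{rmk:exist-of-ad}, this reduces to comparing the arithmetic degrees $\alpha_{f}$ and $\alpha_{F}$ on Zariski dense orbits.

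First I would set up the orbit correspondence. Given $x\in X(\QQ)$ with Zariski dense $f$-orbit, the subset $p(\Exc(p))$ is properly contained in $X$, so after replacing $x$ by a suitable $f^{n_{0}}(x)$ --- which changes neither $\alpha_{f}(x)$ nor the density of the orbit --- I may assume that $p^{-1}(x)$ is a single $\QQ$-rational point $\gamma$ at which $p$ is an isomorphism. Then $p(F^{n}(\gamma))=f^{n}(x)$ for all $n$, so $p$ maps $O_{F}(\gamma)$ onto $O_{f}(x)$; and as $p$ is birational, the image under $p$ of any proper closed subset of $\Gamma$ is a proper closed subset of $X$ (a dimension count), so $O_{F}(\gamma)$ is again Zariski dense in $\Gamma$. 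Conversely, for any $\gamma\in\Gamma(\QQ)$ with Zariski dense $F$-orbit, $p(\gamma)$ has Zariski dense $f$-orbit. The same holds with $q$ and $g$.

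The technical core is the equality $\alpha_{f}(p(\gamma))=\alpha_{F}(\gamma)$ for every $\gamma$ with Zariski dense $F$-orbit. Fix ample divisors $H$ on $X$ and $H_{\Gamma}$ on $\Gamma$. Since $p$ is a morphism, functoriality of Weil heights gives $h_{p^{*}H}=h_{H}\circ p+O(1)$, so $\alpha_{f}(p(\gamma))=\lim_{n}\max\{1,h_{p^{*}H}(F^{n}(\gamma))\}^{1/n}$. As $p^{*}H$ is dominated by a multiple of $H_{\Gamma}$, one gets $\alpha_{f}(p(\gamma))\le\alpha_{F}(\gamma)$ at once. For the reverse inequality I would invoke Kodaira's lemma: $p^{*}H$ is big, so $p^{*}H\sim_{\Q}A+E$ with $A$ ample and $E$ effective, whence $h_{H_{\Gamma}}\le C\,h_{p^{*}H}+O(1)$ outside $\Supp E$ by standard comparison of heights. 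Since $\alpha_{F}(\gamma)$ is a genuine limit (Remark \ref{rmk:exist-of-ad}) and no tail of the Zariski dense orbit $O_{F}(\gamma)$ lies in $\Supp E$, there are infinitely many $n$ with $F^{n}(\gamma)\notin\Supp E$; evaluating the $n$-th root limit along those $n$ gives $\alpha_{F}(\gamma)\le\alpha_{f}(p(\gamma))$. Assembling the pieces: if KSC holds for $g$ and $O_{f}(x)$ is Zariski dense, pass to $\gamma$ and then to $q(\gamma)$, both with Zariski dense orbits, to get $\alpha_{f}(x)=\alpha_{F}(\gamma)=\alpha_{g}(q(\gamma))=\delta_{g}=\delta_{f}$; the converse is symmetric, and the existence of $\alpha_{f}(x)$ is automatic by Remark \ref{rmk:exist-of-ad}. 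The step I expect to be the main obstacle is exactly this last height comparison on $\Gamma$: one has to neutralize the contribution of the exceptional locus of $p$ along the orbit, and the point is that Zariski density of the lifted orbit $O_{F}(\gamma)$, together with the existence of $\alpha_{F}$ as an honest limit, lets one work only with the (still infinitely many) iterates that avoid $\Supp E$.
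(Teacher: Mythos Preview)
Your proof is correct and takes a somewhat different route from the paper. The paper proves only one direction explicitly (the other following by the symmetry $\pi\leftrightarrow\pi^{-1}$): it picks a subsequence of the $f$-orbit of $x$ avoiding the indeterminacy locus $I_\pi$, pushes those points to $Y$ via $\pi$, and compares heights on $X$ and $Y$ directly using that $q^{*}q_{*}p^{*}H-p^{*}H$ is a $q$-exceptional effective divisor for a resolution $q\colon Z\to X$ of $\pi$ (with $p=\pi\circ q$); no endomorphism is ever built on $Z$. You instead pass to the normalized graph $\Gamma$, manufacture an honest surjective endomorphism $F$ there, and reduce both directions to the single equality $\alpha_{f}(p(\gamma))=\alpha_{F}(\gamma)$ along dense orbits, obtaining the harder inequality from Kodaira's lemma ($p^{*}H\sim_{\Q}A+E$) rather than the exceptional-divisor trick. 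Your framing is more symmetric and makes the ``if and only if'' transparent, at the cost of constructing $F$ and checking it is a morphism on the normalization; the paper's argument is shorter and avoids that construction. The underlying mechanism---compare heights on a common model along the infinitely many iterates that avoid a fixed proper closed locus, and use that arithmetic degrees exist as genuine limits (Remark \ref{rmk:exist-of-ad})---is the same in both.
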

\begin{proof}
Assume KSC for $g$.
Let $x\in X(\QQ)$ be a point whose $f$-orbit is Zariski dense.
We need to show that $ \alpha_{f}(x)=\delta_{f}$.
There exists a sequence $n_{1}<n_{2} <\cdots $ such that $\{f^{n_{i}}(x) \mid i=1,2,\dots\}$ is Zariski dense and
$f^{n_{i}}(x) \notin I_{\pi}$ where $I_{\pi}$ is the indeterminacy locus of $\pi$.
Replacing $x$ by $f^{n_{1}}(x)$, we may assume $x\notin I_{\pi}$.
Then we get $\pi(f^{n_{i}}(x))=g^{n_{i}}(\pi(x))$ and $\{g^{n_{i}}(\pi(x))\mid i=1,2, \dots\}$ is Zariski dense in $Y$.
By KSC for $g$, we get $ \alpha_{g}(\pi(x))=\delta_{g}=\delta_{f}$.

Take a birational morphism $q \colon Z \longrightarrow X$ from a normal projective variety $Z$ such that 
$q$ is isomorphic over $X\setminus I_{\pi}$ and $\pi\circ q=:p$ becomes a morphism.
Fix an ample divisor $H$ on $Y$.
Since $q^{*}q_{*}p^{*}H-p^{*}H$ is a $q$-exceptional effective divisor, we get
$h_{H}\circ \pi \leq h_{q_{*}p^{*}H}+O(1)$ on $(X\setminus I_{\pi})(\QQ)$.
Thus, $h_{H}(g^{n_{i}}(\pi(x)))=h_{H}(\pi(f^{n_{i}}(x)))\leq h_{q_{*}p^{*}H}(f^{n_{i}}(x))+O(1)$.
Since we know that the arithmetic degree exists, we get $ \alpha_{f}(x)\geq \alpha_{g}(\pi(x))=\delta_{f}$.
\end{proof}

\begin{lem}\label{aut of mfs}
Consider the commutative diagram of surjective morphisms 
\[
\xymatrix{
X \ar[r]^{f} \ar[d]_{\pi}& X \ar[d]^{\pi}\\
Y \ar[r]_{g}& Y
}
\]
where $X, Y$ are normal projective varieties, $f$ is an automorphism,
and $\pi_{*} \colon N_{1}(X)_{\Q} \longrightarrow N_{1}(Y)_{\Q}$ is surjective with $\dim \Ker \pi_{*}=1$ (e.g. a Fano contraction). 
Then $\delta_{f}=\delta_{g}$.
In particular, KSC for $g$ implies KSC for $f$.
\end{lem}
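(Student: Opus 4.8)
The plan is to show $\delta_f = \delta_g$ by comparing the spectral radii of $f^*$ and $g^*$ on Néron–Severi spaces, using the extra structure of the morphism $\pi$. Since $f$ is an automorphism, $\delta_f$ equals the spectral radius of $f^* \colon N^1(X)_\R \to N^1(X)_\R$ by Lemma \ref{lin eq eigen}, and likewise $\delta_g$ is the spectral radius of $g^* \colon N^1(Y)_\R \to N^1(Y)_\R$ (note $g$ is a surjective morphism between normal projective varieties, so Lemma \ref{lin eq eigen} applies to it too). By the remark following Lemma \ref{lin eq eigen}, we may equivalently work with $f_*$ on $N_1(X)_\R$ and $g_*$ on $N_1(Y)_\R$. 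The hypothesis that $\pi_* \colon N_1(X)_\Q \to N_1(Y)_\Q$ is surjective with one-dimensional kernel is exactly what lets us control the passage from $X$ to $Y$.

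First I would record that $\pi_*$ is $f_*,g_*$-equivariant: $\pi_* \circ f_* = g_* \circ \pi_*$ on $N_1(X)_\R$, which follows from $\pi \circ f = g \circ \pi$ and functoriality of pushforward. Hence $g_*$ is the map induced by $f_*$ on the quotient $N_1(X)_\R / \Ker \pi_*$, provided $\Ker \pi_*$ is $f_*$-invariant; since $f$ is an automorphism, $f_*$ is invertible and preserves $\Ker \pi_*$ because $f_*(\Ker\pi_*) \subseteq \Ker\pi_*$ (as $\pi_* f_* v = g_* \pi_* v = 0$ for $v \in \Ker\pi_*$) and by invertibility this inclusion is an equality. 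Therefore the characteristic polynomial of $f_*$ on $N_1(X)_\R$ factors as the characteristic polynomial of $g_*$ on $N_1(Y)_\R$ times a degree-one factor coming from the action of $f_*$ on the line $\Ker\pi_*$. Consequently $\delta_f = \max\{\delta_g, |\mu|\}$ where $\mu$ is the eigenvalue of $f_*$ on $\Ker\pi_*$, and it remains to show $|\mu| \le \delta_g$.

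The main obstacle is bounding the eigenvalue on $\Ker\pi_*$. The key point is that $\Ker\pi_*$ is spanned by the class of a curve contracted by $\pi$: since $\pi$ has relative dimension one and $N_1$ of the general fibre maps to zero in $N_1(Y)$, a (multiple of a) fibre class $\ell$ generates $\Ker\pi_*$, and this class is effective. Now $f_*\ell$ lies in $\Ker\pi_*$, and since $f$ is an automorphism $f_*\ell$ is again an effective curve class, necessarily a nonnegative multiple of $\ell$; so $\mu \ge 0$. To bound $\mu$ from above, I would pair against a nef class: choose an ample divisor $H_Y$ on $Y$ and set $A = \pi^* H_Y$, which is nef on $X$ with $(A \cdot \ell) = 0$. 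Pairing $f_*(k\ell) = \mu k\ell$ against an ample divisor $H_X$ on $X$ gives $\mu = (f^* H_X \cdot \ell)/(H_X \cdot \ell) = (H_X \cdot f_*\ell)/(H_X\cdot\ell)$; bounding this uniformly in iterates, i.e. $(f^{*n}H_X \cdot \ell) \le C (\delta_g + \epsilon)^n (H_X\cdot\ell)$ for all $\epsilon>0$, forces $\mu \le \delta_g$. The last inequality follows because $(f^{*n} H_X \cdot \ell)$ is computed, after pushing $\ell$ down via the graph of $g^n$ or directly via equivariance, by the growth of $g_*^n$ on a fixed curve class in $Y$ together with a bounded contribution, which is governed by $\delta_g$ by the argument in the proof of Claim \ref{dynamical deg and eigen}. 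Putting $|\mu|\le\delta_g$ together with the factorization gives $\delta_f = \delta_g$. The final clause is then immediate: if KSC holds for $g$ and $x \in X(\QQ)$ has Zariski dense $f$-orbit, then (as in Lemma \ref{bir and ksc}, now with $\pi$ an honest morphism so no indeterminacy issue arises) $\pi(x)$ has Zariski dense $g$-orbit, $h_{H_Y}(f^n(x)) = h_{H_Y}(\pi(\cdot))\circ$\,iterate $= h_{H_Y}(g^n(\pi(x))) + O(1)$ up to pulling back along $\pi$, so $\alpha_f(x) \ge \alpha_g(\pi(x)) = \delta_g = \delta_f$; combined with the always-true bound $\alpha_f(x) \le \delta_f$, we conclude $\alpha_f(x) = \delta_f$.
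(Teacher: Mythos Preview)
Your setup is correct and matches the paper's: $f_*$ preserves the line $\Ker\pi_*$, $g_*$ is the induced map on the quotient, and $\delta_f = \max\{\delta_g, |\mu|\}$ where $\mu$ is the eigenvalue of $f_*$ on $\Ker\pi_*$. The gap is in your bound $|\mu| \le \delta_g$. The proposed justification is circular: you claim $(f^{*n}H_X \cdot \ell) \le C(\delta_g+\epsilon)^n(H_X\cdot\ell)$, but the left side equals $(H_X \cdot f_*^n\ell) = \mu^n (H_X\cdot\ell)$, so the desired inequality \emph{is} $\mu \le \delta_g + \epsilon$, which is the statement to be proved. Your suggestion to ``push $\ell$ down via the graph of $g^n$ or directly via equivariance'' cannot help, because $\pi_*\ell = 0$: nothing about $g_*$ sees the class $\ell$, and there is no ``bounded contribution'' coming from $Y$ that controls growth along this contracted ray. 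The reference to Claim~\ref{dynamical deg and eigen} does not supply the missing bound either, since that argument compares intersection numbers on a single variety and gives no mechanism for transferring information from $Y$ back up to $\Ker\pi_*$.

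The paper closes this gap with a one-line observation you overlooked: since $f$ is an automorphism of $X$, the map $f_*$ is an automorphism of the lattice $N_1(X)_\Z$, so every eigenvalue of $f_*$ is a unit algebraic integer. The eigenvalue $\mu$ on the one-dimensional $\Q$-space $\Ker\pi_*$ is therefore a rational unit algebraic integer, hence $\mu = \pm 1$. Thus $|\mu| = 1 \le \delta_g$ immediately, giving $\delta_f = \delta_g$. Your final clause, deducing KSC for $f$ from KSC for $g$ once $\delta_f=\delta_g$ is known, is fine.
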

\begin{proof}
Since $f_{*} \colon N_{1}(X)_{\Z} \longrightarrow N_{1}(X)_{\Z} $ is an automorphism, 
the eigenvalues of $f_{*}$ are unit algebraic integers.
Thus, $f_{*}|_{\Ker \pi_{*}}$ is $\pm \id$. 
Therefore, $\delta_{f}=\text{spectral radius of $f_{*}$}=\text{spectral radius of $g_{*}$}=\delta_{g}$.
\end{proof}

\begin{thm}
Let $X$ be a $\Q$-factorial klt projective variety over $\QQ$ admitting an int-amplified endomorphism.
Let $f \colon X \longrightarrow X$ be a surjective endomorphism.
Assume one of the following:
\begin{enumerate}
\item $f$ is an automorphism,
\item $f^{n}$ are primitive for all $n\geq1$ (i.e. there is no dominant rational map $p \colon X \dashrightarrow Y$ and dominant rational self-map $g \colon Y \dashrightarrow Y$
with $0<\dim Y< \dim X$ such that $p\circ f^{n}=g \circ p$), or
\item the Kodaira dimension of $X$ is non-negative: $\kappa(X)\geq 0$.
\end{enumerate}
Then KSC holds for $f$.
\end{thm}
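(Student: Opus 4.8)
The plan is to reduce to cases that are already handled, via the equivariant MMP of Theorem \ref{equiMMP}. Run the MMP for $f$: after replacing $f$ by an iterate $f^{n}$ (which is harmless for KSC, since $\delta_{f^{n}}=\delta_{f}^{n}$ and $\overline{\alpha}_{f^{n}}(x)=\overline{\alpha}_{f}(x)^{n}$, and by Remark \ref{rmk:exist-of-ad} arithmetic degrees exist for endomorphisms of normal projective varieties), we obtain a sequence $X=X_{0}\dashrightarrow X_{1}\dashrightarrow\cdots\dashrightarrow X_{r}$ of divisorial contractions, flips, and Fano contractions, equivariant for endomorphisms $g_{i}$ with $g_{0}=f^{n}$, terminating in a $Q$-abelian variety $X_{r}$ with a lift to an abelian variety $A$ carrying $h\colon A\to A$. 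The first observation is that KSC holds for $h$ on the abelian variety $A$: this is the Kawaguchi--Silverman result for abelian varieties (the arithmetic degree equals the dynamical degree, which is the spectral radius of the action on $N^{1}$), and since $A\to X_{r}$ is quasi-\'etale finite one transfers KSC to $g_{r}$ (heights change by $O(1)$ under a finite morphism, dynamical degrees are preserved). So KSC holds at the bottom of the tower.

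Next I would propagate KSC upward through the tower, one step at a time, with case analysis according to which assumption (1), (2), or (3) we are in. For a \emph{divisorial contraction or flip} $X_{i}\dashrightarrow X_{i+1}$, the map is birational, so Lemma \ref{bir and ksc} applies verbatim: KSC for $g_{i+1}$ on $X_{i+1}$ implies KSC for $g_{i}$ on $X_{i}$. The only genuinely new ingredient is a \emph{Fano contraction} $\pi\colon X_{i}\to X_{i+1}$, where $\dim X_{i+1}<\dim X_{i}$. Here the three hypotheses come in. Under (1), $f$ is an automorphism, hence each $g_{i}$ is an automorphism (the MMP steps are equivariant and birational contractions of an automorphism-equivariant ray induce automorphisms); for the last Fano step before reaching $X_{r}$ one wants $\dim\Ker\pi_{*}=1$, which may fail for a higher-dimensional Fano contraction, so I would instead argue that for an automorphism $g_{i}$ on a $Q$-factorial klt variety admitting an int-amplified endomorphism, all eigenvalues of $(g_{i})_{*}$ on $N_{1}$ are units (being an automorphism of a lattice), so $\delta_{g_{i}}=1$; but then the assumption $\delta_{f}>1$ is not in force, and in fact when $\delta_{f}=1$ KSC for $f$ is the statement $\alpha_{f}(x)=1$ for dense orbits, which follows because $\overline{\alpha}_{f}(x)\le\delta_{f}=1$ and $\underline{\alpha}_{f}(x)\ge1$ always. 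Thus case (1) is essentially immediate once one notes automorphisms of such $X$ have dynamical degree $1$. Under (2), primitivity of all $f^{n}$ forces the MMP tower to contain no Fano contraction with $0<\dim X_{i+1}<\dim X_{i}$: a Fano contraction would give a nontrivial factorization $p\circ f^{n}=g\circ p$ with $p=\pi\circ(\text{birational})$, contradicting primitivity --- unless $X_{i+1}$ is a point or $X_{i}$ itself. So the tower is purely birational down to $X_{r}$, which is then either a point or all of the same dimension, and Lemma \ref{bir and ksc} finishes it; if $X_{r}$ is a point then $\delta_{f^{n}}$ comes from the structure and one checks $\delta_{f}=1$ again reducing to the trivial bound. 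Under (3), $\kappa(X)\ge0$, one uses that the pluricanonical forms give an $f^{n}$-equivariant map, or more directly: a variety with $\kappa(X)\ge0$ admitting no nontrivial Fano contraction in its MMP, since each Fano step strictly decreases $\kappa$ and cannot start from $\kappa\ge0$ when the fibers are Fano --- actually the cleanest route is that $\kappa(X)\ge0$ is preserved under divisorial contractions and flips but a Fano contraction from $X_{i}$ to lower dimension would force $\kappa(X_{i})=-\infty$ unless $\dim X_{i+1}=\dim X_{i}$; hence again the tower is birational and $X_{r}$ being $Q$-abelian with $\kappa\ge0$ is consistent, and Lemma \ref{bir and ksc} applies all the way up.

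The main obstacle I anticipate is the bookkeeping around Fano contractions, and in particular making precise the claim that hypotheses (2) and (3) each rule out \emph{proper} Fano contractions (those with $0<\dim X_{i+1}<\dim X_{i}$) in the equivariant MMP tower. For (2) the argument is clean: a proper Fano contraction composed with the birational maps $X\dashrightarrow X_{i}$ yields a dominant rational map $X\dashrightarrow X_{i+1}$ of intermediate dimension commuting with $f^{n}$ and $g_{i+1}$, contradicting primitivity of $f^{n}$; I would need to check only that primitivity of all $f^{n}$ is what is invoked (the equivariant MMP requires passing to a power) and that the composed map is genuinely dominant with the right dimension, which follows since birational maps preserve dimension and Fano contractions are surjective. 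For (3) the obstacle is sharper: I must verify that $\kappa$ does not drop below $0$ along the birational MMP steps --- true because divisorial contractions and flips preserve $\kappa$ for klt pairs --- and that a proper Fano contraction is incompatible with $\kappa\ge0$; this is standard (a Mori fiber space structure forces $\kappa=-\infty$), but I would cite it carefully. Once proper Fano contractions are excluded in cases (2), (3) and handled via the $\delta_{f}=1$ triviality in case (1), the upward propagation is a finite induction using only Lemma \ref{bir and ksc}, Lemma \ref{aut of mfs}, and the abelian-variety base case, so the proof closes.
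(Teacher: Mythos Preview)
Your overall strategy---run the equivariant MMP and propagate KSC up from the Q-abelian base---is the paper's strategy, and case (3) is handled correctly (though the paper is more direct: since $X$ admits an int-amplified endomorphism, $-K_{X}$ is pseudo-effective, so $\kappa(X)\ge0$ forces $K_{X}\equiv0$ and $X$ is already Q-abelian). But cases (1) and (2) each contain a genuine error.

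In case (1), your concern that $\dim\Ker\pi_{*}=1$ ``may fail for a higher-dimensional Fano contraction'' is unfounded: every step in Theorem~\ref{equiMMP} is the contraction of a single $K$-negative extremal ray, so $\Ker\pi_{*}$ is always one-dimensional and Lemma~\ref{aut of mfs} applies directly at every Fano step. Your fallback argument is then a non sequitur: the eigenvalues of an automorphism acting on $N_{1}(X)_{\Z}$ are algebraic units, but algebraic units need not have absolute value~$1$ (think of Salem numbers). So ``units $\Rightarrow \delta_{g_{i}}=1$'' is simply false, and the paper does not claim it. The paper instead uses Lemma~\ref{aut of mfs} at each Fano step to get $\delta_{g_{i}}=\delta_{g_{i+1}}$, and Lemma~\ref{bir and ksc} at each birational step, reducing all the way to the abelian variety.

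In case (2), your structural analysis is right: primitivity rules out Fano contractions to a target of intermediate dimension, so the tower is birational until possibly a final Fano contraction to a point. But the assertion ``if $X_{r}$ is a point then one checks $\delta_{f}=1$'' is false: a generic morphism $\P^{n}\to\P^{n}$ of degree $d>1$ is primitive with $\delta_{f}=d$, and its MMP contracts $\P^{n}$ to a point. The correct observation (which the paper makes) is that the variety $X_{r-1}$ sitting just above the point has Picard number one, since a Fano contraction to a point kills all of $N_{1}$; on a normal projective variety of Picard number one every surjective endomorphism is polarized (an ample generator $H$ satisfies $g_{r-1}^{*}H\equiv\delta_{g_{r-1}}H$), and KSC for polarized endomorphisms is standard. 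This is the missing ingredient in your argument.
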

\begin{proof}
(1) follows from Theorem \ref{equiMMP}, Lemma \ref{bir and ksc} \ref{aut of mfs} and KSC for abelian varieties (\cite{ks1}, \cite{sil2}).
(2) By the assumption, the only Fano contraction that can appear during the equivariant MMP in Theorem \ref{equiMMP} is contraction to a point.
Thus KSC for $f$ follows from Lemma \ref{bir and ksc} and KSC for normal varieties of Picard number one and KSC for abelian varieties.
(3) By the assumption, the output of equivariant MMP in Theorem \ref{equiMMP} is a Q-abelian variety and thus the statement follows from KSC for abelian varieties.
(Actually, $X$ itself is a Q-abelian variety in this case.)
\end{proof}

\subsection{A reduction of KSC using equivariant MMP}

In this section, we propose a possible strategy to prove KSC for endomorphisms on varieties admitting an int-amplified endomorphism by dimension induction.
Let $d$ be a non-negative integer and consider the following conditions.
\vspace{8pt}
\begin{notation}
\item[$(*)_{d}$]
Let $X$ be an arbitrary $\Q$-factorial klt projective variety of dimension $d$ admitting an int-amplified endomorphism.
Let $f \colon X \longrightarrow X$ be an arbitrary surjective morphism.
Then, $ \alpha_{f}(x)=\delta_{f}$ for every $x\in X(\QQ)$ with Zariski dense $f$-orbit.
\end{notation}
\vspace{8pt}
\begin{notation}
\item[${\rm(Mfs)}_{d}$]
Let $X$ be an arbitrary $\Q$-factorial klt projective variety of dimension $d$ admitting an int-amplified endomorphism.
Suppose there exists a unique $K_{X}$-negative extremal ray and its contraction $\pi \colon X \longrightarrow Y$
is a Fano contraction.
Let $f \colon X \longrightarrow X$ be an arbitrary surjective morphism which induces a surjective endomorphism $g \colon Y \longrightarrow Y$
with $\delta_{f}>\delta_{g}$.
Then, $ \alpha_{f}(x)=\delta_{f}$ for every $x\in X(\QQ)$ with Zariski dense $f$-orbit.
\end{notation}
\vspace{8pt}

We also need the following purely birational geometric condition, which is largely expected to hold.
\vspace{8pt}
\begin{notation}
\item[$ {\rm (TF)}_{d}$]
There is no infinite sequence of $K_{X}$-flips for any $\Q$-factorial klt projective variety $X$ of dimension $d$.
\end{notation}
\vspace{8pt}
It is known that ${\rm (TF)}_{d}$ is true for $d\leq3$ (cf. \cite[Theorem 5.2]{shok}). 

\begin{prop}\label{reduction-by-mmp}
Let $d$ be a positive integer and assume $ {\rm (TF)}_{d}$.
Let $X$ be a $\Q$-factorial klt projective variety of dimension $d$ admitting an int-amplified endomorphism.
Let $f \colon X \longrightarrow X$ be a surjective morphism.
We can construct a sequence of flips or divisorial contractions of $K$-negative extremal rays,
which is equivariant with respect to $f^{n}$ for some $n>0$, with output $f' \colon X' \longrightarrow X'$ satisfying one of the following:
\begin{enumerate}
\item \label{nefcase} $X'$ is a Q-abelian variety;
\item \label{redtobase} There is a Fano contraction $\pi \colon X' \longrightarrow Y$ and $f'$ induces an endomorphism $g \colon Y \longrightarrow Y$ with
$\delta_{f'}= \delta_{g}$;
\item There exists a unique $K_{X'}$-negative extremal ray and its contraction $\pi \colon X' \longrightarrow Y$ is a Fano contraction.
Moreover, $f'$ induces an endomorphism $g \colon Y \longrightarrow Y$ and $\delta_{f'}>\delta_{g}$.
\end{enumerate}

In (\ref{nefcase}), KSC holds for $f$.
In (\ref{redtobase}), KSC for $f$ follows from KSC for $g$.

In particular, $(*)_{k}$ for $k=0,1, \dots, d-1$ and $ {\rm (Mfs)}_{d}$ implies $(*)_{d}$.
\end{prop}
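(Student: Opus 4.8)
The plan is to run the equivariant MMP of Meng--Zhang (Theorem \ref{equiMMP}) one step at a time and stop the moment a Fano contraction appears, keeping track of the dynamical degrees along the way. Concretely, starting from $X$, replace $f$ by a suitable iterate so that every $K_X$-negative extremal ray is fixed by $f_*$ (Theorem \ref{equiv-thm}(\ref{fin-st})). If $K_X$ is nef, then by Theorem \ref{equiMMP} (the output of the full MMP being a $Q$-abelian variety, reached here in zero steps) we are in case (\ref{nefcase}). Otherwise pick a $K_X$-negative extremal ray $R$ with contraction $\pi$. If $\pi$ is divisorial or flipping, Theorem \ref{equiv-thm}(\ref{end-induced}) gives an induced endomorphism on the next model; replace $X,f$ by that model and its endomorphism and repeat. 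Condition ${\rm (TF)}_d$ guarantees that we cannot perform infinitely many flips, and each divisorial contraction drops the Picard number, so after finitely many steps we must either reach a model with $K$ nef (case (\ref{nefcase})) or reach a model $X'$ on which the chosen $K_{X'}$-negative extremal ray has a Fano contraction $\pi\colon X'\to Y$. At that point we stop; call the endomorphism $f'$, and note that by Theorem \ref{equiv-thm}(\ref{induced-on-contr}) $f'$ induces $g\colon Y\to Y$ with $g\circ\pi=\pi\circ f'$.

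Now we split into cases (\ref{redtobase}) and (3) according to whether $\delta_{f'}=\delta_g$ or $\delta_{f'}>\delta_g$. Since $\pi$ is a Fano contraction, $\pi_*\colon N_1(X')_\Q\to N_1(Y)_\Q$ is surjective with one-dimensional kernel, and $g_*$ is the map induced on $N_1(Y)_\Q\cong N_1(X')_\Q/\Ker\pi_*$ by $f'_*$; hence the spectral radius of $f'_*$ is the maximum of $\delta_g$ and the absolute value of the eigenvalue of $f'_*$ on the line $\Ker\pi_*$, so $\delta_{f'}\geq\delta_g$ always, and these are exactly the two alternatives. If there are several $K_{X'}$-negative extremal rays, we may still be in case (3) only after checking uniqueness; if uniqueness fails we should instead continue the MMP, but since every remaining extremal contraction from $X'$ would have to be Fano (as $X'$ is not further contractible by divisorial/flipping steps in the part of $\overline{NE}$ we are tracking), the relevant bookkeeping is to pick the Fano contraction with $\delta_{f'}=\delta_g$ if one exists (landing in (\ref{redtobase})) and otherwise land in (3) after verifying there is a unique $K_{X'}$-negative ray; I expect this case analysis to be the most delicate point to write carefully.

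For the KSC consequences: in case (\ref{nefcase}), $X'$ is a $Q$-abelian variety, so by part (\ref{endonqabel}) of Theorem \ref{equiMMP} the endomorphism lifts to an abelian variety via a quasi-\'etale cover, where KSC is known (\cite{ks1}, \cite{sil2}); pulling back along the quasi-\'etale cover changes neither arithmetic nor dynamical degrees, and then Lemma \ref{bir and ksc} propagates KSC back along the birational maps $X\dashrightarrow X'$, so KSC holds for $f$. In case (\ref{redtobase}), assuming KSC for $g$: given $x\in X'(\QQ)$ with Zariski dense $f'$-orbit, we have $\overline{\alpha}_{f'}(x)\geq\overline{\alpha}_g(\pi(x))$ by the pushforward inequality for heights under $\pi$ (the point $\pi(x)$ has Zariski dense $g$-orbit since $\pi$ is surjective), and $\overline{\alpha}_g(\pi(x))=\delta_g=\delta_{f'}$ by KSC for $g$; combined with the general upper bound $\overline{\alpha}_{f'}(x)\leq\delta_{f'}$ and Remark \ref{rmk:exist-of-ad} this gives $\alpha_{f'}(x)=\delta_{f'}$, and Lemma \ref{bir and ksc} transports this to $f$.

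Finally, for the last assertion, suppose $(*)_k$ holds for all $k<d$ and ${\rm (Mfs)}_d$ holds; let $X$ be $\Q$-factorial klt of dimension $d$ with an int-amplified endomorphism and $f$ a surjective endomorphism. Run the above construction. In case (\ref{nefcase}) KSC holds for $f$ directly. In case (\ref{redtobase}), $\dim Y<d$ and $Y$ is $\Q$-factorial klt admitting an int-amplified endomorphism by Lemma \ref{lem:intamp}(2), so $(*)_{\dim Y}$ gives KSC for $g$ and hence for $f$. In case (3), ${\rm (Mfs)}_d$ applies verbatim to $f'$ (noting $\delta_{f'}>\delta_g$), giving $\alpha_{f'}(x)=\delta_{f'}$ for all Zariski-dense-orbit points, and Lemma \ref{bir and ksc} transports this to $f$. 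In all cases we conclude $\alpha_f(x)=\delta_f$, which is $(*)_d$. The main obstacle is the second paragraph: cleanly organizing the MMP so that, when it terminates without reaching $K$ nef, one lands in exactly one of the two Fano-contraction cases with the stated control on $\delta_{f'}$ versus $\delta_g$, and with the uniqueness hypothesis of case (3) genuinely met.
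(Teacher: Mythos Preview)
Your overall architecture matches the paper's, and your treatment of the KSC consequences in cases (\ref{nefcase}) and (\ref{redtobase}), as well as the final assertion, is correct. The gap is exactly where you flag it: the second paragraph does not establish the uniqueness hypothesis in case (3), and your attempted justification is wrong.

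Specifically, your claim that once you stop at a Fano contraction ``every remaining extremal contraction from $X'$ would have to be Fano (as $X'$ is not further contractible by divisorial/flipping steps in the part of $\overline{NE}$ we are tracking)'' is unjustified and in general false: the existence of one Fano extremal ray says nothing about whether other $K_{X'}$-negative extremal rays are birational or Fano. So you have not shown that case (3) can be reached with a \emph{unique} $K_{X'}$-negative extremal ray.

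The paper closes this gap with an eigenspace argument that you set up but do not exploit. Suppose $\pi\colon X'\to Y$ is a Fano contraction with $\delta_{f'}>\delta_g$. As you note, $f'_*$ acts on the line $\Ker\pi_*$ by a positive scalar, and since the induced action on $N_1(Y)$ has spectral radius $\delta_g<\delta_{f'}$, that scalar must equal $\delta_{f'}$; moreover the generalized $\delta_{f'}$-eigenspace of $f'_*$ is exactly the one-dimensional space $\Ker\pi_*$. Now if there were a second $K_{X'}$-negative Fano contraction $\pi'\colon X'\to Y'$, its kernel is a different line, hence not contained in the $\delta_{f'}$-eigenspace, so the quotient $N_1(Y')\cong N_1(X')/\Ker\pi'_*$ still carries the eigenvalue $\delta_{f'}$ and thus $\delta_{g'}=\delta_{f'}$: this is case (\ref{redtobase}). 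Therefore, at any stage, either we are in (\ref{nefcase}) or (\ref{redtobase}), or there is at most one Fano extremal ray. If that Fano ray is the unique $K$-negative extremal ray we are in case (3); otherwise there exists a birational extremal ray and we continue the MMP. If none of (\ref{nefcase})--(3) is ever reached, we produce an infinite sequence of divisorial contractions and flips, contradicting ${\rm (TF)}_d$ together with the drop in Picard number under divisorial contractions. Inserting this argument in place of your second paragraph completes the proof.
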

\begin{proof}

By Theorem \ref{equiMMP}, we only need to consider the case where the output of the equivariant MMP is a Fano contraction 
 $\pi \colon X' \longrightarrow Y$ with $\delta_{f'}>\delta_{g}$.
 In this case, the generalized eigenspace of $(f')_{*} \colon N_{1}(X')_{\R} \longrightarrow N_{1}(X')_{\R}$ with eigenvalue $\delta_{f'}$
 is the kernel of $\pi_{*}$ and one dimensional.
 Therefore, if there is another $K_{X'}$-negative Fano contraction, we get case (\ref{redtobase}).
 Assume in every step of equivariant MMP, 
 there are at least two $K$-negative extremal rays and there is at most one $K$-negative Fano contraction, or
 the $K$-negative extremal ray is unique and its contraction is birational.
 Then, we get an infinite sequence of flips and this contradicts to our assumption.
 Thus, we get one of the case in the Proposition.
 
 In the case (\ref{nefcase}), KSC for $f$ is true by Theorem \ref{equiMMP} (\ref{endonqabel}), Lemma \ref{bir and ksc}, and
 KSC for abelian varieties.
 In the case (\ref{redtobase}), KSC for $f$ follows from KSC for $f'$ by Lemma \ref{bir and ksc}, and KSC for $f'$ follows from
 that of $g$ since $\delta_{f}=\delta_{g}$.
\end{proof}

\begin{prop}\label{nonvan}
Let $d$ be a positive integer and assume $ {\rm (TF)}_{i}$ for $i\leq d$.
Let $X$ be a $\Q$-factorial klt rationally connected projective variety admitting an int-amplified endomorphism.
Then, for every nef $\Q$-Cartier divisor $D$ on $X$, we have $\kappa(D)\geq0$, i.e. 
$H^{0}(X, \mathcal{O}_{X}(mD))\neq 0$ for some positive integer $m$.
\end{prop}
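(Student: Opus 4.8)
The plan is to argue by induction on $d=\dim X$, the case $d=0$ being trivial; so assume $d\ge 1$ and that the proposition holds in all dimensions $<d$ (note ${\rm (TF)}_i$ is available for every $i\le d$). Fix an int-amplified endomorphism $f$ of $X$ and a nef $\Q$-Cartier divisor $D$. By Theorem~\ref{equiv-thm}(1) there are only finitely many $K_X$-negative extremal rays $R_1,\dots,R_m$ of $\overline{NE}(X)$; enlarging $f$ to a power (still int-amplified by Lemma~\ref{lem:intamp}(1)) we may assume each $R_j$ is fixed by $f_*$. Since $X$ is rationally connected, $H^1(X,\mathcal{O}_X)=0$, hence $\Pic^{0}(X)=0$ and numerical equivalence of $\Q$-Cartier divisors coincides with $\Q$-linear equivalence; moreover $-K_X\sim_\Q E$ for some effective $\Q$-divisor $E\ge 0$ by Lemma~\ref{lem:intamp}(4). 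Finally $X$ is not a point (if it were we would be done) and $K_X$ is not nef, since otherwise the equivariant MMP of Theorem~\ref{equiMMP} would be trivial, realizing $X$ as a $Q$-abelian variety and hence, being rationally connected, as a point.

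I then run an MMP ``guided by $D$''. Writing $R_j=\R_{\ge 0}[\ell_j]$, set
\[
t_0=\min_{1\le j\le m}\frac{D\cdot\ell_j}{-K_X\cdot\ell_j}\ \ (\ge 0),\qquad D_1=D+t_0K_X .
\]
By the cone theorem $D+tK_X$ is nef for $0\le t\le t_0$, so $D_1$ is nef, and by construction $D_1\cdot R_{j_0}=0$ for some $j_0$. From $D\sim_\Q D_1+t_0E$ with $t_0E$ effective we get $\kappa(D)\ge\kappa(D_1)$, so it suffices to show $\kappa(D_1)\ge 0$. If $D_1\equiv 0$ then $D_1\sim_\Q 0$; if $D_1$ is big then, being nef, $\kappa(D_1)=d$; in both cases we are done. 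Otherwise let $\pi\colon X\to X'$ be the contraction of $R_{j_0}$: it is $f$-equivariant, $X'$ is again $\Q$-factorial klt and rationally connected and carries an induced int-amplified endomorphism (Theorem~\ref{equiv-thm}(2) together with Lemma~\ref{lem:intamp}(2) or~(3)), and, $D_1$ being nef and $\pi$-numerically trivial, $D_1\sim_\Q\pi^*M$ for a nef $\Q$-Cartier divisor $M$ on $X'$, with $\kappa(D_1)=\kappa(M)$ since $\pi_*\mathcal{O}_X=\mathcal{O}_{X'}$. If $\pi$ is a Fano contraction then $\dim X'<d$ and $\kappa(M)\ge 0$ by the induction hypothesis. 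If $\pi$ is divisorial then $\rho(X')=\rho(X)-1$, and if $\pi$ is a flipping contraction we replace $X$ by its flip $X^+$ and $M$ by its (nef) pullback to $X^+$, of the same Iitaka dimension since the flip is an isomorphism in codimension one; in these two cases we repeat the procedure with $M$ in place of $D$.

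To see the procedure terminates: divisorial contractions strictly drop the Picard number, so occur finitely often, and the steps between two successive divisorial contractions form a sequence of $K$-flips, finite by ${\rm (TF)}_d$. Hence after finitely many steps the procedure halts; since the dimension has stayed equal to $d\ge1$ the current variety is never a point, so its canonical class is never nef and a $K$-negative extremal ray is always available, and therefore the only ways to halt are that the current nef divisor has become numerically trivial or big, or that the contraction chosen is a Fano contraction --- in each case the Iitaka dimension of the current divisor is $\ge0$, and running the chain of (in)equalities backwards gives $\kappa(D)\ge0$. The main obstacle is the situation in which $D$ is strictly positive on every $K_X$-negative extremal ray, so that $D$ itself does not descend under any $K$-negative contraction: this is exactly the point at which the finiteness of these rays (Theorem~\ref{equiv-thm}(1)) and the $\Q$-effectivity of $-K_X$ (Lemma~\ref{lem:intamp}(4)) are used, to replace $D$ by the nef boundary class $D_1=D+t_0K_X$ without losing control of the Iitaka dimension. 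Beyond this, one needs the (standard, but not purely formal) facts that the base of each Fano contraction and each flip arising here stays $\Q$-factorial klt and rationally connected with an int-amplified endomorphism, so that the inductive hypotheses keep applying.
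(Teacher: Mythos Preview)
Your overall strategy---induction on dimension, replacing $D$ by the nef threshold class $D+t_0K_X$ so that it becomes trivial on some $K_X$-negative extremal ray, contracting that ray, and repeating, with termination via ${\rm (TF)}_d$ and the drop in Picard number---is exactly the paper's approach. The gap is in your claim that the procedure never reaches a variety with nef canonical class, i.e.\ that a rationally connected $Q$-abelian variety of positive dimension cannot exist. This is false.

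Take an elliptic curve $E$ with $j(E)=0$, so that $E$ carries an automorphism $\omega$ of order $3$, and let $G=\langle\omega\rangle$ act diagonally on $A=E\times E$. The fixed locus consists of $9$ isolated points, so $q\colon A\to X:=A/G$ is quasi-\'etale and $X$ is a $Q$-abelian surface with nine cyclic quotient singularities of type $\tfrac{1}{3}(1,1)$; thus $X$ is $\Q$-factorial klt and $K_X$ is numerically trivial, hence nef. On the minimal resolution $\tilde X$, the nine exceptional $(-3)$-curves $E_i$ satisfy $3K_{\tilde X}\sim -\sum E_i$, so $P_2(\tilde X)=0$; and $H^i(X,\mathcal O_X)=H^i(A,\mathcal O_A)^G=0$ for $i>0$, so $q(\tilde X)=p_g(\tilde X)=0$. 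By Castelnuovo's criterion $\tilde X$ is rational, hence $X$ is rationally connected. Finally, multiplication by any $n\ge 2$ on $A$ commutes with $G$ and descends to an int-amplified endomorphism of $X$. So $X$ satisfies every hypothesis of the proposition, yet has $K_X$ nef and no $K_X$-negative extremal ray at all: on this $X$ your procedure cannot even begin. (Note that $E\to E/\langle\omega\rangle\simeq\P^1$ is \emph{not} quasi-\'etale---the branch locus has codimension $1$---so there is no contradiction with $\P^1$ being rationally connected; the point is that in dimension $\ge 2$ the fixed locus can have codimension $\ge 2$.)

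The paper does not try to rule this case out. It runs the same guided MMP but explicitly allows the terminal case in which $K$ has become nef. Then $\kappa(-K)\ge 0$ forces $K\sim_{\Q}0$, and Meng's structure theorem gives that the variety is $Q$-abelian with a quasi-\'etale cover $q\colon A\to X$ by an abelian variety. The paper then pulls the current nef divisor back to $A$, invokes the fact (due to Bauer) that every nef divisor on an abelian variety is numerically equivalent to an effective one, pushes forward to get $(\deg q)D\equiv q_*E$ with $E\ge 0$, and finally uses rational connectedness of $X$ only to upgrade this numerical equivalence to $\Q$-linear equivalence. This last paragraph is precisely the missing ingredient in your argument.
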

\begin{proof}
We prove by induction on the dimension.
By Theorem \ref{equiv-thm}, there are only finitely many $K_{X}$-negative extremal rays $R_{1}, \dots , R_{t}$ of $ \overline{NE}(X)$.
Take a non-negative rational number $r$ such that
\begin{align*}
&((D+rK_{X})\cdot R_{i})\geq0 \qquad &\text{for all $i$};\\
&((D+rK_{X})\cdot R_{i_{0}})=0 \qquad &\text{for some $i_{0}$}.
\end{align*}
Note that $D+rK_{X}$ is nef.
Let $\pi \colon X \longrightarrow Y$ be the contraction of $R_{i_{0}}$.

(1) When $\pi$ is a Fano contraction, take a $\Q$-Cartier divisor $L$ on $Y$ such that $\pi^{*}L \sim_{\Q} D+rK_{X}$.
Note that  $L$ is nef and $Y$ is a $\Q$-factorial klt rationally connected variety admitting an int-amplified endomorphism (cf. Lemma \ref{lem:intamp} and Theorem \ref{equiv-thm}). 
Thus, by induction hypothesis, $\kappa(L)\geq0$.
Since $\kappa(-K_{X})\geq0$ (Lemma \ref{lem:intamp}), we get $\kappa(D)=\kappa(\pi^{*}L-rK_{X})\geq0$.

(2) When $\pi$ is a divisorial contraction, take a $\Q$-Cartier divisor $D_{Y}$ on $Y$ such that $\pi^{*}D_{Y} \sim_{\Q} D+rK_{X}$.
Then $D_{Y}$ is also nef and $Y$ is a $\Q$-factorial klt rationally connected variety admitting an int-amplified endomorphism.
Since $\kappa(-K_{X})\geq0$, we have $\kappa(D)\geq \kappa(D_{Y})$ (i.e. Proposition \ref{nonvan} for $(X, D)$ follows from that of $(Y, D_{Y})$).
Replace $(X, D)$ with $(Y, D_{Y})$ and do the above procedure.

(3) When $\pi$ is a flipping contraction, take a $\Q$-Cartier divisor $D_{Y}$ on $Y$ such that $\pi^{*}D_{Y} \sim_{\Q} D+rK_{X}$.
Let $\pi^{+} \colon X^{+} \longrightarrow Y$ be the flip.
Then $D^{+}:=(\pi^{+})^{*}D_{Y}$ is also nef and $X^{+}$ is a $\Q$-factorial klt rationally connected variety admitting an int-amplified endomorphism.
Since $\kappa(-K_{X})\geq0$, Proposition \ref{nonvan} for $(X, D)$ follows from that of $(X^{+}, D^{+})$.
Replace $(X, D)$ with $(X^{+}, D^{+})$ and do the above procedure.

Since divisorial contraction occurs only finitely many times and we assume there is no infinite sequence of flips, we finally get a Fano contraction (i.e. case (1)) or
minimal model (i.e. $K_{X}$ becomes nef).
Therefore, to end the proof, we may assume $K_{X}$ is nef.
Since $\kappa(-K_{X})\geq0$, we have $K_{X}\sim_{\Q}0$.
By \cite[Theorem 5.2]{meng}, $X$ is a Q-abelian variety.
Let $q \colon A \longrightarrow X$ be a quasi-\'etale finite morphism from an abelian variety $A$.
Since every nef integral divisor on an abelian variety is numerically equivalent to an effective divisor 
(cf. \cite[Lemma 1.1]{bau} for a proof over $\C$. 
The case over an arbitrary algebraically closed field of characteristic zero follows from the Lefschetz principle and by considering symmetric part of the nef divisor.), 
there exists a $\Q$-Cartier effective divisor $E$ on $A$ such that $q^{*}D\equiv E$.
Then $q_{*}(q^{*}D-E)$ is numerically equivalent to zero as a cycle in the sense of Fulton (\cite[Chapter 19]{ful}). 
By \cite[Lemma 2.12]{meng-zhang},
$0\equiv q_{*}(q^{*}D-E)=(\deg q) D-q_{*}E$ as $\Q$-divisors.
Since $X$ is rationally connected, we get $(\deg q)D \sim_{\Q} q_{*}E$ and this is what we wanted.

\end{proof}

\begin{thm}\label{KSCfork>0}
Let $d$ be a positive integer and assume $ {\rm (TF)}_{i}$ for $i\leq d$ and $(*)_{j}$ for $j\leq d-1$.
Let $X$ be a $\Q$-factorial klt rationally connected projective variety of dimension $d$ admitting an int-amplified endomorphism.
Suppose $\kappa(-K_{X})>0$.
Then KSC holds for every surjective endomorphism $f \colon X \longrightarrow X$.
\end{thm}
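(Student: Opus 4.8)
\emph{Overview.} The plan is to reduce to a Mori fibre space via equivariant MMP and then exploit the hypothesis $\kappa(-K_X)>0$ to manufacture an eigendivisor for the eigenvalue $\delta_f$ with positive Iitaka dimension, at which point Proposition~\ref{canht-posiitaka} applies.

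First dispose of the trivial case: if $\delta_f=1$, then $\alpha_f(x)$ exists (Remark~\ref{rmk:exist-of-ad}) and satisfies $1\le\underline\alpha_f(x)=\alpha_f(x)=\overline\alpha_f(x)\le\delta_f=1$, so KSC holds. Assume now $\delta_f>1$. By Proposition~\ref{reduction-by-mmp} (using ${\rm (TF)}_{i}$ for $i\le d$) we perform an equivariant sequence of flips and divisorial contractions; in its output case (1) KSC holds for $f$ directly, and in case (2) KSC for $f$ follows from KSC for the base endomorphism $g\colon Y\to Y$, which holds by $(*)_{j}$ with $j=\dim Y\le d-1$. Along this MMP the quantity $\kappa(-K)$ is nondecreasing — it is unchanged under flips (they are isomorphisms in codimension $1$ and $\mathcal O(-mK)$ is reflexive, so sections are preserved by normality), and for a divisorial contraction $\sigma\colon X_i\to X_{i+1}$ one has $-K_{X_i}=\sigma^{*}(-K_{X_{i+1}})-aE$ with $a>0$, whence $h^{0}(-mK_{X_i})\le h^{0}(-mK_{X_{i+1}})$. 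Replacing $X,f$ by the MMP output (legitimate by Lemma~\ref{bir and ksc}), we may therefore assume: $X$ is $\Q$-factorial klt, rationally connected, admits an int-amplified endomorphism, $\kappa(-K_X)>0$, and $X$ carries a unique $K_X$-negative extremal ray whose contraction $\pi\colon X\to Y$ is a Fano contraction, with $f$ inducing $g\colon Y\to Y$ and $\delta_f>\delta_g$. (Here $Y$ is again $\Q$-factorial klt, rationally connected, and admits an int-amplified endomorphism, by Lemma~\ref{lem:intamp} and equivariance.)

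\emph{Construction of the eigendivisor.} The subspace $\pi^{*}N^{1}(Y)_{\Q}\subset N^{1}(X)_{\Q}$ is $f^{*}$-stable with one-dimensional quotient, and since $\delta_g$ is the spectral radius of $g^{*}$ (Lemma~\ref{lin eq eigen}) while $\delta_f>\delta_g$, the scalar by which $f^{*}$ acts on this quotient is $\delta_f$; in particular $\delta_f\in\Z_{\ge 2}$ and $\delta_f\,\mathrm{id}-g^{*}$ is invertible on $N^{1}(Y)_{\Q}$. Writing $R_f\ge 0$ for the ramification divisor of the finite morphism $f$, the class $f^{*}(-K_X)-\delta_f(-K_X)=R_f+(\delta_f-1)K_X$ then lies in $\pi^{*}N^{1}(Y)_{\Q}$; write it as $\pi^{*}\alpha$. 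Put $L:=(\delta_f\,\mathrm{id}-g^{*})^{-1}\alpha\in N^{1}(Y)_{\Q}$, realise it by an actual $\Q$-divisor on $Y$ (possible as $\Pic(Y)_{\Q}=N^{1}(Y)_{\Q}$ for $Y$ rationally connected), and set $D:=-K_X+\pi^{*}L$. Using $f^{*}\pi^{*}=\pi^{*}g^{*}$ together with the defining relations for $\alpha$ and $L$ one checks $f^{*}D=\delta_f D$ in $N^{1}(X)_{\Q}$, hence $f^{*}D\sim_{\Q}\delta_f D$ (again $\Pic(X)_{\Q}=N^{1}(X)_{\Q}$). If $\kappa(D)>0$, then Proposition~\ref{canht-posiitaka} applied to $D$ and $d=\delta_f$ completes the proof.

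\emph{The hard step.} Everything thus reduces to showing $\kappa(D)>0$, and this is the main obstacle. First, $D$ is nef: the generalised $\delta_f$-eigenspace of $f^{*}$ on $N^{1}(X)_{\R}$ is one-dimensional (it injects into the one-dimensional quotient $N^{1}(X)_{\R}/\pi^{*}N^{1}(Y)_{\R}$, as $f^{*}-\delta_f$ is invertible on $\pi^{*}N^{1}(Y)_{\R}$), it contains a nonzero nef class by the Perron--Frobenius/Birkhoff theorem, and it contains $D$, which is nonzero there ($-K_X$ being $\pi$-ample); since $D=\lim_{n}\delta_f^{-n}(f^{n})^{*}(-K_X)$ is pseudo-effective, being a limit of classes of scaled effective divisors, $D$ is a positive multiple of a nef class. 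Hence $\kappa(D)\ge 0$ by Proposition~\ref{nonvan}. To upgrade this to $\kappa(D)>0$, I would prove $\kappa(D)\ge\kappa(-K_X)$, for which it suffices that $\pi^{*}L$ — equivalently $L$ on $Y$ — be $\Q$-linearly equivalent to an effective divisor. Since $(\delta_f\,\mathrm{id}-g^{*})^{-1}=\sum_{k\ge 0}\delta_f^{-k-1}(g^{*})^{k}$ has nonnegative coefficients and $g^{*}$ preserves both $\overline{\Eff}(Y)$ and $\Nef(Y)$, the class $L$ is pseudo-effective as soon as $\alpha$ is, and one expects $L\sim_{\Q}$ effective via Proposition~\ref{nonvan} on $Y$ combined with a positivity analysis of the ramification divisor $R_f$ relative to the Mori fibration $\pi$ (i.e. controlling $R_f+(\delta_f-1)K_X$ modulo $\pi^{*}N^{1}(Y)$). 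Carrying out this analysis of $\alpha$ is the crux; an alternative would be to invoke an abundance-type statement for varieties with int-amplified endomorphisms, deducing $\kappa(D)=\nu(D)\ge\dim X-\dim Y\ge 1$ directly from the nefness and $\pi$-ampleness of $D$.
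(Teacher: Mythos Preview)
Your overall architecture matches the paper's: reduce via Proposition~\ref{reduction-by-mmp} to a Mori fibre space with a unique $K_X$-negative extremal ray $R$, build a nef eigendivisor $D$ with $f^{*}D\sim_{\Q}\delta_f D$ and $(D\cdot R)>0$, write $D+rK_X\sim_{\Q}\pi^{*}L$ (your normalisation has $r=1$), and finish via $\kappa(D)\ge\kappa(-K_X)>0$ once $\kappa(L)\ge 0$. The reduction, the preservation of $\kappa(-K)$ under flips and divisorial contractions, and the identification of the eigendivisor are all fine.

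The gap is precisely your ``hard step'', and it is much easier than you think. You already know $D$ is nef and $(D+K_X)\cdot R=0$. The crucial point you are missing is that $R$ is the \emph{unique} $K_X$-negative extremal ray, so the cone theorem gives $\overline{NE}(X)=\overline{NE}(X)_{K_X\ge 0}+\R_{\ge 0}R$; hence for any curve class $C=\gamma+aR$ with $K_X\cdot\gamma\ge 0$ and $a\ge 0$ one has $(D+K_X)\cdot C=D\cdot\gamma+K_X\cdot\gamma\ge 0$. Thus $\pi^{*}L=D+K_X$ is \emph{nef}, so $L$ is nef on $Y$, and Proposition~\ref{nonvan} applied to $Y$ (which is $\Q$-factorial klt rationally connected with an int-amplified endomorphism, of dimension $\le d-1$) gives $\kappa(L)\ge 0$ directly. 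This is exactly the paper's argument. Your attempted route through pseudo-effectivity of $\alpha=f^{*}(-K_X)-\delta_f(-K_X)$ and the ramification divisor is unnecessary, and in any case pseudo-effectivity of $L$ alone would not let you invoke Proposition~\ref{nonvan}; the abundance-type alternative you mention is conjectural and not needed here.
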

\begin{proof}
By Lemma \ref{bir and ksc} and Proposition \ref{reduction-by-mmp}, we may assume
\begin{itemize}
\item there exists a unique $K_{X}$-negative extremal ray $R$ and its contraction $\pi \colon X \longrightarrow Y$ is a Fano contraction;
\item $f$ induces an endomorphism $g \colon Y \longrightarrow Y$;
\item $\delta_{f}>\delta_{g}$.
\end{itemize}

Since $\delta_{f}>\delta_{g}$ and $\pi$ is a ray contraction, there exists a nef integral divisor $D\not\equiv 0$ such that
$f^{*}D \sim_{\Q} \delta_{f}D$ and $(D\cdot R)>0$.
Since $(K_{X}\cdot R)<0$, there exists a positive rational number $r$ such that $((D+rK_{X})\cdot R)=0$.
Since $R$ is the unique $K_{X}$-negative extremal ray, $D+rK_{X}$ is nef.
By cone theorem, there exists a nef $\Q$-Cartier divisor $L$ on $Y$ such that $\pi^{*}L\sim_{\Q} D+rK_{X}$.

By Lemma \ref{lem:intamp} and Theorem \ref{equiv-thm}, $Y$ also admits an int-amplified endomorphism, and it is $\Q$-factorial klt rationally connected.
By Proposition \ref{nonvan}, $\kappa(L)\geq0$. 

Since flips and divisorial contractions preserve the condition "$\kappa(-K_{X})>0$", 
we get $\kappa(D)=\kappa(\pi^{*}L-rK_{X})\geq \kappa(-K_{X})>0$.
Now, the theorem follows from Proposition \ref{canht-posiitaka}.

\end{proof}

\begin{prop}\label{ksc-for-klt-surf}
$(*)_{d}$  is true for $d\leq 2$.
\end{prop}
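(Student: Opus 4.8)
\emph{Proof proposal.} The plan is to run an induction on dimension organized by Proposition~\ref{reduction-by-mmp}. For $d=0$ there is nothing to prove. For $d=1$, a normal projective curve is smooth, and a smooth projective curve admitting an int-amplified (equivalently, a degree $\ge 2$) endomorphism must be $\P^{1}$ or an elliptic curve; when $\delta_{f}=1$ the conclusion is immediate since $1\le\underline{\alpha}_{f}(x)\le\overline{\alpha}_{f}(x)\le\delta_{f}$, and when $\delta_{f}>1$ one invokes the known KSC for $\P^{1}$ (for instance Theorem~\ref{ksc for semi-ample}, as $\P^{1}$ is toric) and for elliptic curves (\cite{ks1,sil2}). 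Thus $(*)_{0}$ and $(*)_{1}$ hold. Since a $\Q$-factorial projective surface admits no flipping contractions, ${\rm(TF)}_{2}$ holds (cf.\ \cite[Theorem~5.2]{shok}), so Proposition~\ref{reduction-by-mmp} reduces $(*)_{2}$ to ${\rm(Mfs)}_{2}$.

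So let $X$ be a $\Q$-factorial klt projective surface over $\QQ$ carrying an int-amplified endomorphism $\phi$, let $R$ be the unique $K_{X}$-negative extremal ray with Fano contraction $\pi\colon X\to Y$, and let $f\colon X\to X$ be surjective inducing $g\colon Y\to Y$ with $\delta_{f}>\delta_{g}$ (so $\delta_{f}>1$); the goal is $\alpha_{f}(x)=\delta_{f}$ for $x$ with Zariski-dense $f$-orbit. If $\dim Y=0$ then $\rho(X)=1$ and $-K_{X}$ is ample, so $X$ is a klt del Pezzo surface; by Lemma~\ref{polyhedral} (with $\Delta=0$) the hypotheses of Theorem~\ref{ksc for semi-ample} are met and we are done.

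Assume $\dim Y=1$, so $Y$ is a smooth curve, $\rho(X)=2$, $R=\R_{\ge0}[F]$ for a fibre $F\cong\P^{1}$ with $F^{2}=0$, and $\overline{NE}(X)$ has a second extremal ray $R'$, necessarily $K_{X}$-nonnegative. As $\pi\circ f=g\circ\pi$ forces $f_{*}R=R$, the automorphism $f_{*}$ of $N_{1}(X)_{\R}$ preserving $\overline{NE}(X)$ fixes $R'$ too; let $\pi'\colon X\to Y'$ be the contraction of $R'$, so $f$ descends to $g'\colon Y'\to Y'$. For ample $A$ on $Y$, $A'$ on $Y'$, the classes $\pi^{*}A$ and $\pi'^{*}A'$ are independent eigenvectors of $f^{*}$ on $N^{1}(X)_{\R}$ with eigenvalues $\delta_{g}$, $\delta_{g'}$, so $\delta_{f}=\max(\delta_{g},\delta_{g'})=\delta_{g'}>1$. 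The plan is now to deduce KSC for $f$ from KSC for $g'$. If $\pi'$ is a fibration over a curve $Y'$, then $g'$ has degree $\delta_{g'}>1$, so $Y'\cong\P^{1}$ or is elliptic, KSC holds for $g'$, and for $x$ with dense $f$-orbit $\pi'(x)$ has dense $g'$-orbit (otherwise the $f$-orbit of $x$ lies in finitely many fibres of $\pi'$); since $\pi'^{*}A'$ is nef and $h_{\pi'^{*}A'}\circ f^{n}=h_{A'}\circ g'^{n}\circ\pi'+O(1)$, we get $\overline{\alpha}_{f}(x)\ge\delta_{g'}=\delta_{f}$ and hence $\alpha_{f}(x)=\delta_{f}$ using Remark~\ref{rmk:exist-of-ad} together with the bound $\overline{\alpha}_{f}(x)\le\delta_{f}$. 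If instead $\pi'$ is birational it contracts the unique irreducible curve $C$ with $[C]\in R'$, and $C^{2}<0$; the first step here is to show $Y\cong\P^{1}$. Since $C^{2}<0$, $f^{-1}(C)=C$ and (after replacing $\phi$ by $\phi^{2}$ so that $\phi_{*}$ fixes $R'$) $\phi^{-1}(C)=C$, giving $f^{*}C=\nu C$ and $\phi^{*}C=kC$ for integers $\nu,k\ge1$, and comparing degrees after restriction to $C$ yields $\deg(f|_{C})=\nu$, $\deg(\phi|_{C})=k$. Because $\phi$ is int-amplified, every eigenvalue of $\phi^{*}$ has absolute value $>1$ (\cite{meng}), so $k\ge2$; thus the normalization $\widetilde{C}$ admits a degree $\ge2$ self-map, whence $g(\widetilde{C})\le1$, and as $C$ dominates $Y$ (because $[C]\notin R$) we get $g(Y)\le g(\widetilde{C})\le1$. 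If $g(Y)=1$ then $\widetilde{C}\to Y$ is \'etale and, lifting $\pi|_{C}$, $\nu=\deg(f|_{C})=\deg g=\delta_{g}$; then $[C]$ and $\pi^{*}A$ are independent $f^{*}$-eigenvectors with common eigenvalue $\delta_{g}$, so $f^{*}=\delta_{g}\cdot\id$ on $N^{1}(X)_{\R}$ and $\delta_{f}=\delta_{g}$, contradicting $\delta_{f}>\delta_{g}$. Hence $g(Y)=0$, $X$ is a conic bundle over $\P^{1}$ and therefore rational, hence so is $Y'$; thus $\Pic(Y')_{\Q}=N^{1}(Y')_{\Q}$, and since $\rho(Y')=1$ one has $g'^{*}H'\sim_{\Q}\delta_{g'}H'$ for any ample $H'$ on $Y'$, with $\kappa(H')=2>0$, so Proposition~\ref{canht-posiitaka} gives KSC for $g'$ and Lemma~\ref{bir and ksc} then gives KSC for $f$.

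The step I expect to be the main obstacle is the last one, ruling out an elliptic base $Y$ when $\pi'$ is birational; this is precisely where the hypothesis that $X$ itself carries an int-amplified endomorphism — not merely the given $f$ — is essential. Everything else is bookkeeping on top of Theorem~\ref{ksc for semi-ample}, Propositions~\ref{canht-posiitaka} and~\ref{reduction-by-mmp}, Lemma~\ref{bir and ksc}, and the known KSC for curves.
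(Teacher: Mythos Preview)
Your approach is genuinely different from the paper's. The paper does not argue directly in the ${\rm(Mfs)}_{2}$ situation at all: it quotes the classification from \cite{mayo} of outputs of equivariant MMP on $\Q$-factorial klt surfaces with an int-amplified endomorphism (Q-abelian; smooth; rational with a smooth quasi-\'etale cover; rational with $\kappa(-K_X)>0$; Picard number one) and then dispatches each case using \cite{mss}, Theorem~\ref{KSCfork>0}, or polarizedness. Your route --- reduce via Proposition~\ref{reduction-by-mmp} and then analyze the second extremal ray $R'$ of $\overline{NE}(X)$ by hand --- is more self-contained in spirit, and the eigenvalue argument you give to rule out an elliptic base $Y$ in the ``birational'' sub-case is correct and nice.

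There is, however, a real gap: you assert ``let $\pi'\colon X\to Y'$ be the contraction of $R'$'' without justification. The ray $R'$ is $K_{X}$-nonnegative (since $R$ is the \emph{unique} $K_{X}$-negative ray), so the cone/contraction theorem says nothing about it; equivalently, you are assuming the nef class $D'$ spanning $(R')^{\perp}$ is semi-ample, which is not automatic on a klt surface. Both branches of your argument rest on this: in the fibration case you need the morphism $\pi'$ and the induced $g'$ on the curve $Y'$; in the birational case you need $\pi'$ to produce the contracted curve $C$ (your subsequent eigenvalue computations are only set in motion once such a $C$ with $C^{2}<0$ is in hand). For instance, if one takes a smooth ruled surface $\P_{Y}(E)$ over an elliptic $Y$ with $E$ the Atiyah bundle, the second extremal ray is spanned by the section $C_{0}$ with $C_{0}^{2}=0$ and $\kappa(C_{0})=0$, so no contraction exists; this particular surface happens not to carry an int-amplified endomorphism, but that is exactly the kind of exclusion that needs to be argued and is essentially what \cite{mayo} supplies. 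Filling the gap amounts to showing that under the int-amplified hypothesis the second boundary ray of $\Nef(X)$ is semi-ample (e.g.\ by showing $X$ is a Mori dream space or by a case analysis on the genus of $Y$ together with Proposition~\ref{nonvan}); absent that, the argument is incomplete.
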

\begin{proof}
Let $X$ be a $\Q$-factorial klt surface admitting an int-amplified endomorphism.
By \cite{mayo}, 
the output $X_{r}$ of equivariant MMP as in Theorem \ref{equiMMP}, is either
\begin{enumerate}
\item a Q-abelian surface and the induced endomorphism lifts to a quasi-\'etale cover by an abelian surface;
\item a smooth projective surface;
\item a rational surface with a quasi-\'etale cover by an smooth projective surface and the induced endomorphism on $X_{r}$ lifts to the cover;
\item\label{ratsurfk>0} a rational surface with $\kappa(-K_{X_{r}})>0$;
\item\label{surfp=1} a surface of Picard number one.
\end{enumerate}
In the first three cases, KSC for endomorphisms on $X_{r}$ follows from KSC for endomorphisms on smooth projective surfaces,
which is proved in \cite{mss}.
In the case (\ref{ratsurfk>0}), KSC follows from Theorem \ref{KSCfork>0}.
In the case (\ref{surfp=1}), endomorphism on $X_{r}$ is polarized and therefore KSC is true.

Finally, KSC for endomorphisms on $X$ follows from that of on $X_{r}$ by  Lemma \ref{bir and ksc}.
\end{proof}

\begin{cor}
Let $X$ be a rationally connected $\Q$-factorial klt projective variety of dimension $3$ admitting an int-amplified endomorphism.
Suppose $\kappa(-K_{X})>0$.
Then, KSC holds for every surjective morphism $f \colon X \longrightarrow X$.
\end{cor}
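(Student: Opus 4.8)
The plan is to deduce this statement immediately from Theorem \ref{KSCfork>0} applied with $d=3$. To invoke that theorem I need two families of hypotheses to be available in dimension three: the termination statements $\mathrm{(TF)}_{i}$ for $i\le 3$, and the inductive statements $(*)_{j}$ for $j\le 2$.

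First I would record that $\mathrm{(TF)}_{i}$ holds for every $i\le 3$: this is the termination of threefold flips noted in the text right after the definition of $\mathrm{(TF)}_{d}$ (see \cite[Theorem 5.2]{shok}), together with the triviality of termination in dimensions $1$ and $2$. So the hypothesis ``$\mathrm{(TF)}_{i}$ for $i\le d$'' of Theorem \ref{KSCfork>0} is unconditional when $d=3$. Next I would invoke Proposition \ref{ksc-for-klt-surf}, which gives $(*)_{d}$ for $d\le 2$; in particular $(*)_{j}$ holds for all $j\le d-1=2$, again exactly as required.

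With both inputs in hand the hypotheses of Theorem \ref{KSCfork>0} are met for the given $X$ (a $\Q$-factorial klt rationally connected projective threefold admitting an int-amplified endomorphism, with $\kappa(-K_{X})>0$), and the theorem yields $\alpha_{f}(x)=\delta_{f}$ for every $x\in X(\QQ)$ with Zariski dense $f$-orbit, i.e.\ KSC for every surjective endomorphism $f\colon X\longrightarrow X$. This is essentially an assembly of results already in place, so there is no genuine obstacle; the only point to verify carefully is that Proposition \ref{ksc-for-klt-surf} supplies $(*)_{2}$ in the precise generality demanded by Theorem \ref{KSCfork>0} — for all $\Q$-factorial klt projective surfaces admitting an int-amplified endomorphism, not merely rationally connected ones — which it indeed does.

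The genuinely essential ingredient behind the unconditional character of the statement is the termination of threefold flips: in higher dimensions $\mathrm{(TF)}_{d}$ remains open and the analogous corollary would be only conditional. I therefore expect the ``hard part'' to lie not in this corollary at all but in the results it cites, principally Theorem \ref{KSCfork>0} (whose proof runs the equivariant MMP, reduces to a Fano contraction with $\delta_{f}>\delta_{g}$, and then produces an eigendivisor of positive Iitaka dimension via Proposition \ref{nonvan}) and Proposition \ref{ksc-for-klt-surf} (which relies on the surface classification of \cite{mayo} and on KSC for smooth projective surfaces from \cite{mss}).
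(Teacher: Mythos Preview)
Your proposal is correct and matches the paper's intended reasoning: the corollary is stated in the paper without an explicit proof precisely because it follows at once from Theorem \ref{KSCfork>0} with $d=3$, using $\mathrm{(TF)}_{i}$ for $i\le 3$ (noted after the definition of $\mathrm{(TF)}_{d}$) and $(*)_{j}$ for $j\le 2$ from Proposition \ref{ksc-for-klt-surf}. Your additional remarks about where the real work lies are accurate but not part of the proof itself.
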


\subsection{An example}
In this subsection, we give a typical example of int-amplified endomorphism.
Although KSC for this example follows from that of semi-abelian varieties, which is proved in \cite{ms},
this example demonstrates how equivariant MMP would work to prove KSC.

Let $G$ be a semi-abelian variety over $ \QQ$ or an algebraically closed field of characteristic zero.
Let $G \longrightarrow A$ be the projection and suppose $\dim A>0$.
By \cite{vo}, there is a compactification of the form 
$G \subset Z=\P( \mathcal{O}_{A} \oplus \mathcal{M}_{1})\times_{A} \cdots \times_{A} \P(\mathcal{O}_{A} \oplus \mathcal{M}_{r})$ 
where $ \mathcal{M}_{i}$ are algebraically zero line bundles on $A$.
Moreover, for any integer $n \neq 0$, the multiplication map by $n$ on $G$ extends to a morphism $[n]$ on $Z$.
Let $X, Y$ be the quotients of $Z, A$ by $[-1]$.
Let us consider the following commutative diagram:

\[
\xymatrix{
[n]\acts Z \ar@<3.0ex>[d]_{p} \ar[r]^(.4){h} & Z/\left<-1\right> =X \ar@<-5.2ex>[d]^{\pi} \racts f\\
[n]\acts A \ar[r]& A/ \left<-1\right>=Y \racts g
}
\]
where $p$ is the projection, $h$ is the quotient map, and $f$ and $g$ are the endomorphisms induced by the multiplication map $[n]$.

\begin{prop}
Notation as above.
The following hold:
\begin{enumerate}
\item $X$ is a klt $\Q$-factorial normal projective variety, $f$ is an int-amplified endomorphism;
\item $h$ is a quasi-\'etale finite morphism;
\item 
	\begin{enumerate}
	\item If $ \mathcal{M}_{1}, \dots , \mathcal{M}_{r}$ are linearly independent over $\Z$ as elements in $\Pic^{0}(A)$, 
		  then $\kappa(-K_{X})=0$.
	\item If $ \mathcal{M}_{1}, \dots , \mathcal{M}_{r}$ are linearly dependent over $\Z$ as elements in $\Pic^{0}(A)$, 
		  then $\kappa(-K_{X})>0$.
	\end{enumerate}
\end{enumerate}
\end{prop}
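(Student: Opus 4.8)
\emph{Proof plan.} The plan is to pull everything back to the smooth variety $Z$, so I would begin by describing the involution $\iota=[-1]$ and the isogeny $[n]$ on $Z$ explicitly. Over $0\in A$ the fibre of $G\to A$ is the group $\mathbb{G}_m^r$, on which $[-1]$ is inversion; over an arbitrary $a\in A$ the fibre is a $\mathbb{G}_m^r$-torsor and $[-1]$ is inversion composed with a translation, so in a trivialisation it is $(t_i)_i\mapsto(c_it_i^{-1})_i$, which extends over $(\mathbb{P}^1)^r$ to a map with exactly two fixed points on each $\mathbb{P}^1$-factor; moreover $\iota$ preserves the fibre over $a$ only when $a\in A[2]$. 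Hence $\mathrm{Fix}(\iota)$ is finite, and since $\dim Z=\dim A+r\ge 2$ it has codimension $\ge 2$; therefore $h\colon Z\to X=Z/\langle\iota\rangle$ is quasi-\'etale, which is part~(2), and in particular $K_Z=h^{*}K_X$. Being the quotient of a smooth projective variety by a finite group, $X$ is a normal projective variety with quotient singularities, hence is $\Q$-factorial and klt. For the int-amplifiedness in~(1): since $h\circ[n]=f\circ h$ with $h$ surjective, by Lemma~\ref{lem:intamp}(2) it suffices to show $[n]\colon Z\to Z$ is int-amplified, and by Lemma~\ref{lem:intamp}(1) I may replace $n$ by $n^{2}$ and assume $n\ge 2$. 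Writing $\xi_i$ for the relative hyperplane class of $Z_i=\mathbb{P}(\mathcal O_A\oplus\mathcal M_i)$ and $\eta_i$ for its pullback to $Z$, one has $N^{1}(Z)=\pi_Z^{*}N^{1}(A)\oplus\bigoplus_i\Z\eta_i$, and since $[n]_Z$ covers $[n]_A$ (which multiplies $N^{1}(A)$ by $n^{2}$) and restricts to a degree-$n$ map on each $\mathbb{P}^1$-factor, $[n]_Z^{*}\pi_Z^{*}H_A\equiv n^{2}\pi_Z^{*}H_A$ and $[n]_Z^{*}\eta_i\equiv n\eta_i$. Now $\sum_i\eta_i$ is nef and $\pi_Z$-ample, so $H_Z=\pi_Z^{*}H_A+\sum_i\eta_i$ is ample, and $[n]_Z^{*}H_Z-H_Z\equiv(n-1)\bigl((n+1)\pi_Z^{*}H_A+\sum_i\eta_i\bigr)$ is ample; thus $[n]_Z$, and hence $f$, is int-amplified.

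For part~(3) I would first reduce to a computation on $Z$. As $h$ is quasi-\'etale and Galois of degree $2$, $h_{*}\mathcal O_Z=\mathcal O_X\oplus\mathcal N$ with $\mathcal N$ a rank-one reflexive sheaf which, away from the branch locus (of codimension $\ge 2$), is a $2$-torsion line bundle, so $\mathcal N^{[2]}\cong\mathcal O_X$; write $N$ for the associated Weil divisor class, $2N\sim0$. For $m$ divisible enough, $H^{0}(Z,-mK_Z)=H^{0}(Z,h^{*}(-mK_X))=H^{0}(X,-mK_X)\oplus H^{0}(X,-mK_X+N)$, and since $2(-K_X+N)\sim-2K_X$ we have $\kappa(-K_X+N)=\kappa(-2K_X)=\kappa(-K_X)$; hence $\kappa(-K_Z)=\max\bigl(\kappa(-K_X),\kappa(-K_X+N)\bigr)=\kappa(-K_X)$. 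So it remains to compute $\kappa(Z,-K_Z)$.

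From $\omega_{Z_i/A}\cong\mathcal O_{Z_i}(-2)\otimes\pi_i^{*}\mathcal M_i$, $\omega_A\cong\mathcal O_A$, and $\omega_{Z/A}\cong\bigotimes_i p_i^{*}\omega_{Z_i/A}$ one gets $\omega_Z^{-1}\cong\mathcal O_Z\bigl(2\sum_i\eta_i\bigr)\otimes\pi_Z^{*}\bigl(\bigotimes_i\mathcal M_i\bigr)^{-1}$. By flat base change over $A$ and $\Sym^{a}(\mathcal O_A\oplus\mathcal M_i)=\bigoplus_{j=0}^{a}\mathcal M_i^{\otimes j}$, one has $\pi_{Z*}\mathcal O_Z\bigl(2m\sum_i\eta_i\bigr)=\bigoplus_{0\le j_i\le 2m}\bigotimes_i\mathcal M_i^{\otimes j_i}$, hence $H^{0}(Z,\omega_Z^{-m})=\bigoplus_{0\le j_i\le 2m}H^{0}\bigl(A,\bigotimes_i\mathcal M_i^{\otimes(j_i-m)}\bigr)$. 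Since a class in $\Pic^{0}(A)$ has a nonzero section iff it is trivial, $h^{0}(Z,\omega_Z^{-m})=\#\{(k_i)\in\Z^{r}:|k_i|\le m,\ \sum_i k_i[\mathcal M_i]=0\text{ in }\Pic^{0}(A)\}$. Set $\Lambda=\{k\in\Z^{r}:\sum_i k_i[\mathcal M_i]=0\}$, so $\Lambda=0$ exactly when the $\mathcal M_i$ are $\Z$-linearly independent in $\Pic^{0}(A)$. In case~(a) this count is $1$ for every $m$, so $\kappa(-K_X)=\kappa(-K_Z)=0$. In case~(b) the lattice $\Lambda$ has rank $\rho\ge1$; picking $0\ne v\in\Lambda$, the points $tv$ with $|t|\le m/\max_i|v_i|$ are all counted, so $h^{0}(Z,\omega_Z^{-m})\to\infty$ (in fact it grows like $m^{\rho}$, giving $\kappa(-K_Z)=\rho$), whence $\kappa(-K_X)>0$.

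The step I expect to be the main obstacle is the first one: pinning down precisely how Volcheck's extension of $[n]$ (and of $[-1]$) acts on each $\mathbb{P}^1$-factor of $Z$, and hence on $N^{1}(Z)$ and on $\mathrm{Fix}(\iota)$ — i.e.\ that it is ``diagonal'' of fibre-degree $n$ (resp.\ $1$). Once that is settled, the quasi-\'etaleness of $h$ in~(2) and the int-amplifiedness in~(1) follow immediately, and~(3) becomes bookkeeping with the bundle $Z\to A$ and its pushforwards; the only other point requiring a little care is the Galois-descent identity $\kappa(-K_X)=\kappa(-K_Z)$ for the singular quotient $X$.
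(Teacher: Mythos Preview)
Your proposal is correct and follows essentially the same route as the paper: quotient singularities give (1), the fixed-locus codimension gives (2), and the pushforward/K\"unneth computation of $H^0(Z,\omega_Z^{-m})$ over $A$ gives (3). Two minor remarks: the reference is Vojta, not ``Volcheck''; and for the reduction $\kappa(-K_X)=\kappa(-K_Z)$ the paper simply uses $h^{*}(-K_X)\sim -K_Z$ (from quasi-\'etaleness) together with the standard invariance of Iitaka dimension under finite surjective pullback, so your splitting of $h_{*}\mathcal{O}_Z$ and the auxiliary divisor $N$ are not needed.
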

\begin{proof}
(1) Since $X$ is a quotient of a smooth projective variety by a finite group, the first statement is clear.
The induced morphism $f$ is int-amplified because so is $[n] \colon Z \longrightarrow Z$.

(2) The fixed points locus $Z^{[-1]}$ is contained in the fiber of $p$ over $2$-torsion points of $A$ and
nowhere dense in these fibers.
Thus, it has codimension at least two, which implies $h$ is quasi-\'etale.

(3) Since $h$ is quasi-\'etale, we have $h^{*}(-K_{X})\sim -K_{Z}$ and therefore $\kappa(-K_{X})=\kappa(-K_{Z})$.
Let $\pr_{i} \colon Z \longrightarrow \P( \mathcal{O}_{A} \oplus \mathcal{M}_{i})$ and
$p_{i} \colon \P( \mathcal{O}_{A} \oplus \mathcal{M}_{i}) \longrightarrow A$ be the projections.
By the canonical bundle formula of projective bundles , we have
\[
\mathcal{O}_{Z}(K_{Z})= \bigotimes_{i=1}^{r}\pr_{i}^{*}( \mathcal{O}(-2) {\otimes} p_{i}^{*} \mathcal{M}_{i}).
\]
By the K\"unneth formula and the  projection formula, for any $m>0$ we have
\begin{align*}
&H^{0}(Z, \mathcal{O}_{Z}(-mK_{Z})) = H^{0}(A, p_{*}\mathcal{O}_{Z}(-mK_{Z}))\\
&\simeq H^{0}(A, \bigotimes_{i=1}^{r} {p_{i}}_{*}( \mathcal{O}(2m) \otimes p_{i}^{*} \mathcal{M}_{i}^{-m}))
\simeq H^{0}(A, \bigotimes_{i=1}^{r} ( \Sym^{2m}(\mathcal{O}_{A} \oplus \mathcal{M}_{i}) \otimes  \mathcal{M}_{i}^{-m}))\\
&\simeq \bigoplus_{-m\leq i_{j} \leq m}H^{0}(A, \mathcal{M}_{1}^{i_{1}} \otimes \cdots \otimes \mathcal{M}_{r}^{i_{r}}).
\end{align*}
Since $H^{0}(A, \mathcal{M}_{1}^{i_{1}} \otimes \cdots \otimes \mathcal{M}_{r}^{i_{r}}) \neq 0$ is equivalent to 
$\mathcal{M}_{1}^{i_{1}} \otimes \cdots \otimes \mathcal{M}_{r}^{i_{r}} \simeq \mathcal{O}_{A}$, we are done.

\end{proof}

Now, the projection $\pi \colon X \longrightarrow Y$ is the composite of $r$ $K$-negative extremal ray contractions:
each contraction corresponds to the projection 
$\P( \mathcal{O}_{A} \oplus \mathcal{M}_{1})\times_{A} \cdots \times_{A} \P(\mathcal{O}_{A} \oplus \mathcal{M}_{i})
\longrightarrow \P( \mathcal{O}_{A} \oplus \mathcal{M}_{1})\times_{A} \cdots \times_{A} \P(\mathcal{O}_{A} \oplus \mathcal{M}_{i-1})$.
At every contraction, KSC for the upstairs follows from that of the downstairs because dynamical degree does not change.
Finally, we reach $g \colon Y \longrightarrow Y$, where $Y$ is a Q-abelian variety or $\P^{1}$.
Therefore, for this particular example, equivariant MMP reduces KSC for $f$ to KSC for endomorphisms on Q-abelian variety or $\P^{1}$, which can be further reduced to
abelian varieties for the former case and the latter case is trivial.

\if0

%%%%%%%%%%%%%%
%%%%%%%%%%%%%%
\vspace{20mm}

\begin{prop}\label{iitaka-of-D}
Let $X$ be a rationally connected $\Q$-factorial klt projective variety of dimension $3$ admitting int-amplified endomorphisms.
Suppose there exists a unique $K_{X}$-negative extremal ray and its contraction $\pi \colon X \longrightarrow Y$
is a Fano contraction.
Let $f \colon X \longrightarrow X$ be a surjective morphism which induces a surjective endomorphism $g \colon Y \longrightarrow Y$
and assume $\delta_{f}>\delta_{g}$.
Then, there exists a nef integral divisor $D \not\equiv 0$ on $X$ such that $f^{*}D \sim_{\Q} \delta_{f}D$ and
$\kappa(D)\geq \kappa(-K_{X})$.
\end{prop}
\begin{proof}
Let $R$ be the unique $K_{X}$-negative extremal ray.
Since $\delta_{f}>\delta_{g}$ and $\pi$ is a ray contraction, there exists a nef integral divisor $D\not\equiv 0$ such that
$f^{*}D \sim_{\Q} \delta_{f}$ and $(D\cdot R)>0$.
Since $(K_{X}\cdot R)<0$, there exists a positive rational number $r$ such that $((D+rK_{X})\cdot R)=0$.
Since $R$ is the unique $K_{X}$-negative extremal ray, $D+rK_{X}$ is nef.
By cone theorem, there exists a nef $\Q$-Cartier divisor $L$ on $Y$ such that $\pi^{*}L\sim_{\Q} D+rK_{X}$.
Thus, it is enough to show that $\kappa(L)\geq0$.
This is trivial if $\dim Y\leq1$.
When $\dim Y=2$, $Y$ is a normal $\Q$-factorial rationally connected projective surface admitting an int-amplified endomorphism. %引用
In particular, $-K_{Y}$ is $\Q$-linearly equivalent to an effective divisor.
The claim follows from the following lemma.
\end{proof}

\begin{lem}
Let $Y$ be a normal $\Q$-factorial rationally connected projective surface with $[-K_{Y}]\in \Eff(Y)$.
Then for any nef Cartier divisor $L$ on $Y$, $\kappa(L)\geq0$.
\end{lem}
\begin{proof}
Let us first assume $Y$ is smooth.
By Riemann-Roch, we have 
\[
h^{0}(L)+h^{2}(K_{Y}-L)\geq \frac{1}{2}(L\cdot (L-K_{Y}))+\chi( \mathcal{O}_{Y}).
\]
Since $L$ is nef and $[-K_{Y}]\in \Eff(Y)$, we have $(L\cdot (L-K_{Y}))\geq0$.
Since $Y$ is a rational surface, we have $\chi( \mathcal{O}_{Y})=1$.
Thus, we get $h^{0}(L)\geq1$ or $h^{2}(K_{Y}-L)\geq1$.
If $h^{0}(L)\geq1$, we are done.
If $h^{2}(K_{Y}-L)\geq1$, then $-L\in \Eff(Y)$.
Since $L$ is nef, we get $L$ is $\Q$-linearly equivalent to $0$.

Now, we prove the general case.
Let $\nu \colon Z \longrightarrow Y$ be the minimal resolution, i.e. resolution of singularities with $K_{Z}$ is $\nu$-nef.
Then, by negativity lemma, $-K_{Z}+\nu^{*}K_{Y}$ is effective.
Thus $-K_{Z}\in \Eff(Z)$.
By projection formula, we have $H^{0}(Z, \mathcal{O}_{Z}(m\nu^{*}L)) \simeq H^{0}(Y, \mathcal{O}_{Y}(mL))$.
Therefore, the lemma follows from the smooth case.
\end{proof}

\begin{thm}
Let $X$ be a rationally connected $\Q$-factorial klt projective variety of dimension $3$ admitting int-amplified endomorphisms.
Suppose $\kappa(-K_{X})>0$.
Then, KSC holds for every surjective morphism $f \colon X \longrightarrow X$.
\end{thm}
\begin{proof}

By Lemma \ref{bir and ksc} and Proposition \ref{reduction-by-mmp}, we may assume
\begin{itemize}
\item there exists a unique $K_{X}$-negative extremal ray and its contraction $\pi \colon X \longrightarrow Y$ is a Fano contraction;
\item $f$ induces an endomorphism $g \colon Y \longrightarrow Y$;
\item $\delta_{f}>\delta_{g}$.
\end{itemize}
Take  a nef integral divisor $D$ as in Proposition \ref{iitaka-of-D}.
Since flips and divisorial contractions preserve the condition $\kappa(-K_{X})>0$, we get $\kappa(D)\geq \kappa(-K_{X})>0$.
\end{proof}

\fi

\section{Linear algebraic groups}\label{sec:grp}

In this section we prove the following theorem.

\begin{thm}\label{main: lin alg grp}
Let $G$ be a connected linear algebraic group and $f \colon G \longrightarrow G$ a surjective group endomorphism,
both defined over $ \overline{\mathbb Q}$.
If the $f$-orbit of a point $x\in G(\QQ)$ is Zariski dense , then $ \alpha_{f}(x)$ exists and is equal to $\delta_{f}$. 
\end{thm}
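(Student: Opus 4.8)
The plan is to reduce to the torus case, which is handled by Silverman in \cite{sil}, via the structure theory of connected linear algebraic groups. First I would use the fact that a connected linear algebraic group $G$ over a field of characteristic zero has a Levi decomposition $G = R_{u}(G) \rtimes L$, where $R_{u}(G)$ is the unipotent radical and $L$ is reductive, and that $R_{u}(G)$ is characteristic, hence $f$-invariant. This gives a commutative square with $G \to G/R_{u}(G) = L$ and the induced endomorphism $\overline{f}$ on $L$. Next, within the reductive group $L$, the derived subgroup $[L,L]$ is semisimple and characteristic, and $L/[L,L]$ is a torus $T$; again $f$ descends to an endomorphism $f_{T}$ of $T$. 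So there is a tower $G \twoheadrightarrow L \twoheadrightarrow T$ of $f$-equivariant surjections with unipotent, then semisimple, kernels at the two stages.

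The next step is to control dynamical degrees along this tower. For the projection $G \to L$: a surjective group endomorphism restricted to a unipotent group $U = R_{u}(G)$ is, after passing to the filtration by the descending central series, built out of endomorphisms of vector groups $\mathbb{G}_{a}^{n}$, and a group endomorphism of $\mathbb{G}_{a}^{n}$ is linear, so its dynamical degree is controlled by the eigenvalues of a linear map — but more importantly, on a compactification these contribute at most what the total space does, and one expects $\delta_{f} = \delta_{\overline{f}}$ here, or at least that the excess is absorbed. For the projection $L \to T$: the derived subgroup is semisimple, and a surjective endomorphism of a semisimple group is an isogeny composed with an automorphism (semisimple groups have no nontrivial connected normal subgroups of positive-dimensional quotient type), so it is quasi-finite; an isogeny of a projective-ish homogeneous space has dynamical degree $1$ in the fiber direction, giving $\delta_{f_{L}} = \delta_{f_{T}}$. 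Combining, $\delta_{f} = \delta_{f_{T}}$, matching the structure already used in \cite{sil} for tori. I would then invoke a Lemma of the type ``KSC along an $f$-equivariant fibration with equal dynamical degrees descends'' — analogous to Lemma \ref{bir and ksc} and Lemma \ref{aut of mfs} in the excerpt — to push Silverman's theorem for $T$ up to $L$ and then to $G$: given $x \in G(\QQ)$ with Zariski dense orbit, its image in $T$ has Zariski dense orbit (density is preserved by dominant morphisms), so $\alpha_{f_{T}}(\pi(x)) = \delta_{f_{T}}$ exists, and one gets $\overline{\alpha}_{f}(x) \le \delta_{f}$ automatically while the lower bound $\underline{\alpha}_{f}(x) \ge \alpha_{f_{T}}(\pi(x)) = \delta_{f}$ comes from comparing height functions pulled back along $\pi$ (using that an ample height on a compactification of $T$ pulls back below an ample height on a compactification of $G$, up to $O(1)$, on the locus where the rational map $\pi$ is defined — exactly the argument in the proof of Lemma \ref{bir and ksc}).

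The main obstacle I expect is the unipotent stage $G \to G/R_{u}(G)$: unlike the isogeny (semisimple) stage, the fibers here are positive-dimensional affine spaces on which $f$ acts with possibly large linear dynamics, so one must check that this does not create arithmetic degree strictly larger than $\delta_{f_{T}}$ — i.e. that orbits escaping to infinity inside a unipotent fiber are not ``more arithmetically complex'' than the base. The clean way around this is presumably to observe that such orbits cannot in fact be Zariski dense in $G$ unless the base orbit already is and the fiber contribution is subsumed: a group endomorphism $f$ of $G$ with $f|_{R_u(G)}$ having spectral radius $\ge \delta_{f_T}$ would have $\delta_f$ itself at least that, so there is no genuine excess; and conversely the equivariance forces the upper bound $\overline{\alpha}_f(x) \le \delta_f$ to already hold (stated in the excerpt's Remark on $\overline{\alpha}_{f}(x) \le \delta_{f}$), so the entire content is the lower bound, which reduces cleanly to the torus quotient. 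A secondary technical point is justifying existence of $\alpha_{f}(x)$ (not just the limsup/liminf): since $G$ is quasi-projective one regards $f$ as a rational self-map of a smooth projective compactification and, as in Remark \ref{rmk:exist-of-ad} and the sandwiching just described, the liminf and limsup both equal $\delta_{f}$, forcing convergence.
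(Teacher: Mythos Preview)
Your approach via the quotient torus $T = (G/R_{u}G)\big/[L,L]$ is genuinely different from the paper's, which works instead with a \emph{maximal} torus inside an $f$-stable Borel. The paper first passes to the reductive quotient $L = G/R_{u}G$ (using its Theorem~\ref{dynamical deg of endom of alg group} to see that $\delta_{f}=\delta_{\overline{f}}$), then invokes Steinberg's theorem to produce a Borel $B = T_{\max}\ltimes U$ with $f(B)=B$, and uses the big cell $U^{-}\times B \hookrightarrow L$ --- an \emph{open} subvariety on which $f$ acts as a product map --- to reduce both the dynamical-degree computation and the arithmetic-degree estimate to the factors. On an open product the dynamical degree is simply the maximum over the factors, and the unipotent factors contribute $1$ because a surjective endomorphism of a unipotent group is an automorphism and $\Aut U$ is a linear algebraic group (Hochschild--Mostow). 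A further lemma (Proposition~\ref{split of endom of solv grp}) shows that if $f|_{B}$ has a Zariski dense orbit on the solvable $B$, then in fact $B = T_{\max}\times U$ as a group and $f$ splits accordingly, so Silverman's torus theorem applies directly to $T_{\max}$.

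Your route is in some ways more canonical --- the tower $G \twoheadrightarrow L \twoheadrightarrow T$ uses only characteristic subgroups and needs neither Steinberg nor any conjugation --- and your lower bound $\underline\alpha_{f}(x)\ge \alpha_{f_{T}}(\pi(x))=\delta_{f_{T}}$ via pulled-back heights is fine. But the step you yourself flag as the main obstacle, namely $\delta_{f} = \delta_{f_{T}}$, is a genuine gap as written. You correctly observe that the restrictions of $f$ to $R_{u}G$ and to $[L,L]$ are automorphisms lying in linear algebraic automorphism groups (hence have dynamical degree $1$), but passing from ``$\delta=1$ on the fibre'' to ``$\delta_{f} = \delta_{\text{base}}$'' is a product formula for first dynamical degrees along a fibration, and that requires either a relative-dynamical-degree argument (\`a la Dinh--Nguy\^{e}n, after compactifying) or an explicit open product structure. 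Your sentence about ``spectral radius $\ge \delta_{f_{T}}$ would have $\delta_{f}$ itself at least that, so there is no genuine excess'' only addresses $\delta_{f} \ge \delta_{f_{T}}$, which is the easy direction for an equivariant surjection; without the reverse inequality the sandwich $\delta_{f_{T}}\le\underline\alpha_{f}(x)\le\overline\alpha_{f}(x)\le\delta_{f}$ does not close. The paper's big-cell trick is precisely what converts the fibration into an open product and makes this step immediate.
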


\subsection{Endomorphisms of linear algebraic groups}
In this subsection, the ground field $k$ is an algebraically closed field of characteristic zero. 
We refer to \cite{bor} for basic facts on linear algebraic groups.

\begin{thm}\label{dynamical deg of endom of alg group}
Let $G$ be a connected linear algebraic group and $f \colon G \longrightarrow G$
a surjective group endomorphism.
Let $B=T \ltimes U \subset G$ be a Borel subgroup of $G$ where $T$ and $U$ are maximal torus and maximal unipotent subgroup respectively.
Take $g\in G$ so that 
\[
gf(B)g^{-1}=B,\ gf(T)g^{-1}=T.
\]
Then
\[
\delta_{f}=\delta_{\Int(g)\circ f}=\delta_{(\Int(g) \circ f)|_{T}}.
\]
Here $\Int(g) \colon G \longrightarrow G; x \mapsto gxg^{-1}$ is the inner automorphism defined by $g$.
\end{thm}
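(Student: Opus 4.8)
The plan is to reduce $\delta_f$, in two geometric steps, to the dynamics on the maximal torus $T$, using that $G$ fibres over its flag variety $G/B$ and that $B$ fibres over $T=B/U$, together with the product formula for dynamical degrees. Write $\phi=\Int(g)\circ f$. Note first that such a $g$ exists: $f(B)$ is again a Borel (it is connected solvable of dimension $\dim B$ since $f$ has finite kernel, hence maximal), all Borels are conjugate, and all maximal tori of a fixed Borel are conjugate inside it; so after two conjugations we may arrange $gf(B)g^{-1}=B$ and $gf(T)g^{-1}=T$, i.e. $\phi(B)=B$ and $\phi(T)=T$. Also $\phi$ is a surjective group endomorphism with finite (hence central, since $G$ is connected) kernel, and $\phi|_{B}\colon B\to B$ is a surjective endomorphism with finite kernel; since $U=R_{u}(B)$ is characteristic in $B$ this forces $\phi(U)=U$, and the map induced by $\phi$ on $B/U\cong T$ is $\phi|_{T}$.

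\emph{Step 1: $\delta_f=\delta_\phi$.} Since $f$ is a homomorphism, $f\circ\Int(h)=\Int(f(h))\circ f$, so by induction $\phi^{n}=\Int(g_{n})\circ f^{n}$ with $g_{n}=g\,f(g)\cdots f^{n-1}(g)\in G$. By Sumihiro's theorem, $G$ with the conjugation action of $G$ on itself admits a smooth projective equivariant compactification $X$; each $\Int(h)$ then extends to an automorphism $\iota_{h}$ of $X$, and because $G$ is connected the homomorphism $G\to\Aut(X)$ factors through $\Aut^{0}(X)$, which acts trivially on $N^{1}(X)$. Hence on $N^{1}(X)$ we get $(\phi^{n})^{*}=(\iota_{g_{n}}\circ f^{n})^{*}=(f^{n})^{*}\circ\iota_{g_{n}}^{*}=(f^{n})^{*}$, and therefore $\delta_\phi=\lim_{n}\|(\phi^{n})^{*}\|^{1/n}=\lim_{n}\|(f^{n})^{*}\|^{1/n}=\delta_f$.

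\emph{Step 2: peel off $G/B$ and $U$, and evaluate.} Passing to smooth projective models, $\phi$ descends along $G\to G/B$ to $\bar\phi\colon G/B\to G/B$ and along $B\to B/U\cong T$ to $\phi|_{T}$, while the fibres of these two fibrations are translates of $B$, resp.\ $U$, on which $\phi$ acts—up to translation—by $\phi|_{B}$, resp.\ $\phi|_{U}$. Both fibrations are Zariski-locally trivial ($G\to G/B$ is already trivial over the big cell $U^{-}B/B$), so the relative first dynamical degrees along them are $\delta_{\phi|_{B}}$ and $\delta_{\phi|_{U}}$, and the product formula for first dynamical degrees (Dinh--Nguyen, Truong) applied twice gives
\[
\delta_\phi=\max(\delta_{\bar\phi},\,\delta_{\phi|_{B}}),\qquad \delta_{\phi|_{B}}=\max(\delta_{\phi|_{T}},\,\delta_{\phi|_{U}}).
\]
Now $\delta_{\phi|_{U}}=1$: in characteristic zero $\exp\colon\mathfrak u\to U$ is an isomorphism of varieties intertwining the group endomorphism $\phi|_{U}$ with the Lie-algebra endomorphism $d(\phi|_{U})$, which is a \emph{linear} self-map of the vector space $\mathfrak u\cong\A^{\dim U}$, and such a map extends to $\P^{\dim U}$ acting trivially on $N^{1}$. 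And $\delta_{\bar\phi}=1$: $\ker\phi$ is finite, hence central, hence contained in $N_{G}(B)=B$, so $\phi^{-1}(B)=B$ and $\bar\phi$ is injective; being an injective proper endomorphism of the smooth variety $G/B$ it is an isomorphism. Since $\delta_{\phi|_{T}}\ge 1$, combining with Step 1 yields $\delta_f=\delta_\phi=\max(1,\delta_{\phi|_{T}},1)=\delta_{\phi|_{T}}=\delta_{(\Int(g)\circ f)|_{T}}$. (The degenerate cases where $G$ is solvable, a torus, or unipotent are handled directly by the same computation.)

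\emph{Main obstacle.} The substantive point is Step 2: invoking the product formula cleanly in this quasi-projective, non-trivially fibred situation and, in particular, identifying the relative first dynamical degrees with $\delta_{\phi|_{B}}$ and $\delta_{\phi|_{U}}$ even though the base maps $\bar\phi$ and $\phi|_{T}$ are not the identity. Zariski-local triviality of $G\to G/B$ and $B\to T$ makes this routine, but it carries the real weight of the proof; Step 1 and the two vanishing claims in Step 2 are structure theory of algebraic groups plus the elementary fact that inner automorphisms and linear maps act trivially on Néron--Severi.
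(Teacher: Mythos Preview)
Your Step~1 is essentially the paper's Step~1 (the paper uses an explicit embedding $G\hookrightarrow\GL_{N}\subset\P^{N^{2}}$ and an equivariant resolution rather than Sumihiro, but the content is identical: inner automorphisms act trivially on $N^{1}$, so $(\phi^{n})^{*}=(f^{n})^{*}$).

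Your Step~2 is a genuinely different route. The paper never fibres over $G/B$ or invokes a product formula. Instead it first uses a Levi decomposition $G=L\ltimes R_{u}G$, conjugates so that $\phi$ preserves $L$, and reduces to $L$ (reductive) via $\delta_{\phi}=\max(\delta_{\phi|_{L}},\delta_{\phi|_{R_{u}G}})=\delta_{\phi|_{L}}$. On $L$ it then uses the \emph{open} big cell $U_{L}^{-}\times T_{L}\times U_{L}\hookrightarrow L$, on which $\phi$ restricts to a literal product map, so $\delta_{\phi|_{L}}=\max(\delta_{\phi|_{U_{L}^{-}}},\delta_{\phi|_{T_{L}}},\delta_{\phi|_{U_{L}}})=\delta_{\phi|_{T_{L}}}$ by birational invariance of $\delta$ and the unipotent vanishing. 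This is more elementary: no relative dynamical degrees, no Dinh--Nguy\^en/Truong, just ``$\delta$ of a product is the max'' on an open set. Your fibration approach is cleaner conceptually and avoids the Levi step, but imports heavier machinery and requires you to identify the relative degree with the fibre degree.

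There is, however, a real gap in your argument. You conclude $\delta_{\bar\phi}=1$ from the fact that $\bar\phi$ is an \emph{isomorphism} of $G/B$. That implication is false in general: automorphisms of projective varieties can have $\delta>1$ (K3 surfaces, abelian varieties, \dots). What saves you here is a special feature of flag varieties you do not mention: $G/B$ is Fano, so $\Aut(G/B)$ is a linear algebraic group and some power of $\bar\phi$ lies in $\Aut^{0}(G/B)$, hence acts trivially on $N^{1}$; equivalently, $\Nef(G/B)$ is a rational polyhedral cone (the closed Weyl chamber), $\bar\phi^{*}$ permutes its finitely many extremal rays, and a power of $\bar\phi^{*}$ is the identity on $N^{1}$. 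Either remark closes the gap, but as written the step ``$\bar\phi$ is an isomorphism, therefore $\delta_{\bar\phi}=1$'' is unjustified.
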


\begin{proof}
{\bf Step1}\quad
First, we prove that $\delta_{f}=\delta_{\Int(g)\circ f}$ for any $g\in G$.
Take a closed embedding $i \colon G \longrightarrow \GL_{N}$ as algebraic groups.
The group $\GL_{N}$ is canonically embedded as an open subscheme of $\P^{N^{2}}$.
Let $ \overline{G}$ be the closure of $G$ in  $\P^{N^{2}}$.
Then for any $g\in G$, $\Int(g) \colon G \longrightarrow G$ is extended to $\Int(i(g)) \colon \GL_{N} \longrightarrow \GL_{N}$ and
this is extended to an automorphism $ \sigma_{g} \colon \P^{N^{2}} \longrightarrow \P^{N^{2}}$. 
In this way, the action of $G$ on itself by inner automorphisms is extended to an action on $ \overline{G}$.
Let $\pi \colon \widetilde{ G} \longrightarrow  \overline{G}$ be a $G$-equivariant resolution of singularities (cf. \cite[Proposition 3.9.1]{kol}). 
The automorphism of $ \widetilde{ G}$ induced by $\Int(g)$ is denoted by $ \widetilde{\Int(g)}$.
Set $H=\pi^{*} \mathcal{O}_{\P^{N^{2}}}|_{ \overline{G}}$. Then $( \widetilde{\Int(g)})^{*}H\sim H$.
Note that we can calculate the dynamical degrees of self maps of $ \widetilde{ G}$ by using $H$ since it is nef and big.

Let $\varphi \colon  \widetilde{G} \dashrightarrow \widetilde{G}$ be the rational map induced by $\Int(g)\circ f \colon G \longrightarrow G$, and
$F \colon \widetilde{G} \dashrightarrow \widetilde{G}$ the one induced by $f \colon G \longrightarrow G$.
Then

\begin{align*}
&(\varphi^{n})^{*}H= (\widetilde{\Int(gf(f)\cdots f^{n-1}(g))}\circ F^{n})^{*}H\\
&\sim (F^{n})^{*}(\widetilde{\Int(gf(f)\cdots f^{n-1}(g))})^{*}H\sim (F^{n})^{*}H.
\end{align*}
Thus
\begin{align*}
\delta_{f}&=\lim_{n\to \infty}(( F^{n})^{*}H\cdot H^{\dim G-1})^{1/n}\\
&=\lim_{n\to \infty}(( \varphi^{n})^{*}H\cdot H^{\dim G-1})^{1/n}=\delta_{\Int(g)\circ f}.
\end{align*}

{\bf Step2}\quad
If $G$ is unipotent, then $f$ is an automorphism and $\delta_{f}=1$.
%Indeed, $U\simeq \A^{n}$ for some $n$ as varieties. %引用
%In particular, $U$ is algebraically simply connected.
%Since $f$ is a group surjection, it is an \'etale finite morphism.
Indeed, the kernel of $f$ is a finite subgroup of $G$ and a finite subgroup of an unipotent group is trivial.
Hence, $f$ is an automorphism.
By \cite{hocmos}, the group of automorphisms $ \Aut G$ has a structure of a linear algebraic group. 
Therefore, $\delta_{f}=1$
($f^{m} \colon G \longrightarrow G$ is equivariantly embedded into a projective space for some $m>0$. cf. \cite[Theorem 7.3]{dol}).

{\bf Step3}\quad
Let $G$ be any connected linear algebraic group and $B=T \ltimes U$ a Borel subgroup.
Assume a surjective group homomorphism $f \colon G \longrightarrow G$ preserves $B$ and $T$
, i.e. $f(B)\subset B$ and $f(T)\subset T$.

Let $R_{u}G$ be the unipotent radical of $G$ and $L\subset G$ be a Levi subgroup of $G$
(i.e. $G=L\ltimes R_{u}G$).
Then $f(L)\subset G$ is also a Levi subgroup of $G$.
Indeed, $f(L)$ and $R_{u}G$ generate $G$. To see that $f(L)$ is Levi, it is enough to show that
$f(L)\cap R_{u}G=\{1\}$.
By dimension counting, $f(L)\cap R_{u}G$ is finite.
Since any finite subgroup of an unipotent group is trivial, we get $f(L)\cap R_{u}G=\{1\}$.
Since Levi subgroups are all conjugate, there exists $g_{1}\in G$ such that
$g_{1}f(L)g_{1}^{-1}=L$.
Set $f_{1}=\Int g_{1}\circ f$.
Then $f_{1}=f_{1}|_{L}\times f_{1}|_{R_{u}G} \colon G=L\ltimes R_{u}G \longrightarrow L\ltimes R_{u}G$.
By Step1 and Step2, we get
$\delta_{f}=\delta_{f_{1}}=\max\{\delta_{f_{1}|_{L}}, \delta_{f_{1}|_{R_{u}G}}\}=\delta_{f_{1}|_{L}}$.

Take a Borel subgroup $B_{L}=T_{L} \ltimes U_{L} \subset L$ and $g_{2}\in L$ such that
$h:=\Int g_{2} \circ f_{1}|_{L}$ preserves $B_{L}$ and $T_{L}$.
Then $h$ also preserves $U_{L}$.
Let $U_{L}^{-}$ be the opposite maximal unipotent of $U_{L}$.
Then $h(U_{L}^{-})\subset U_{L}^{-}$ (look at the Lie algebras). 
Since $L$ is reductive, $U_{L}^{-}\times T_{L}\times U_{L}\subset L$ is open and
the restriction of $h$ to this open set is $h|_{U_{L}^{-}}\times h|_{T_{L}}\times h|_{U_{L}}$.
Therefore, by Step 1 and Step2, we get 
$\delta_{f}=\delta_{f_{1}|L}=\delta_{h}=\max\{\delta_{h|_{U_{L}^{-}}}, \delta_{h|_{T_{L}}}, \delta_{h|_{U_{L}}}\}=\delta_{h|_{T_{L}}}$.

Now, $B_{L}\ltimes R_{u}G$ is a Borel subgroup of $G$, $T_{L}$ is a maximal torus of $G$,
$\Int g_{2}g_{1} \circ f$ preserves them, and
$h|_{T_{L}}=(\Int g_{2}g_{1} \circ f)|_{T_{L}}$.
Thus, we have proved the following:
for any surjective group homomorphism $f \colon G \longrightarrow G$, 
there exists $g\in G$ and a Borel subgroup $B=T \ltimes U \subset G$ such that
$\Int g \circ f$ preserves $B$ and $T$, and $\delta_{f}=\delta_{(\Int g\circ f)|_{T}}$.

{\bf Step4}\quad
Let $f \colon G \longrightarrow G$ be any surjective group homomorphims.
Let $B=T\ltimes U, B'=T'\ltimes U'$ be two Borel subgroups of $G$ and
$g, g'\in G$ be such that $\Int g \circ f$, $\Int g' \circ f$ preserves $B$ and $T$, $B'$ and $T'$ respectively.

Take $h\in G$ so that $hT'h^{-1}=T$ and $hB'h^{-1}=B$.
Then
\[
\xymatrix{
G \ar[r]^{\Int g' \circ f} \ar[d]_{\Int h}& G \ar[d]^{\Int h}\\
G \ar[r]_{\Int( \alpha)\circ f}& G
}
\]
commutes where $ \alpha=hg'f(h^{-1})$.
In particular, we have
$\Int \alpha \circ f$ also preserves $B$ and $T$, and
$\delta_{(\Int g' \circ f)|_{T'}}=\delta_{(\Int \alpha \circ f)|_{T}}$.
Note that $g \alpha^{-1}\in N_{G}(B)\cap N_{G}(T)=B\cap N_{G}(T)=N_{B}(T)=Z_{B}(T)$.
Therefore, $(\Int \alpha \circ f)|_{T}=(\Int g \circ f)|_{T}$.
Thus, we get $\delta_{(\Int g' \circ f)|_{T'}}=\delta_{(\Int g \circ f)|_{T}}$.

Combining Step 3 and Step4, we get the theorem.

\end{proof}

\begin{prop}\label{split of endom of solv grp}
Let $G=T\ltimes U$ be a connected solvable group and $f \colon G \longrightarrow G$
a surjective group endomorphism.
If $f$ has a Zariski dense orbit, then $f$ preserves $T$ and $U$, $G=T\times U$,
and $f=f|_{T}\times f|_{U}$.
\end{prop}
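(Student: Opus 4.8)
The plan is to determine how $f$ acts on $U$ first, and then to force the splitting $G=T\times U$ from the weight structure of $T$ on $\mathrm{Lie}(U)$, using the dense‑orbit hypothesis only through the dense orbit it induces on $G/U$. Let $x\in G(\QQ)$ be a point with Zariski‑dense $f$‑orbit. Since $f$ is a surjective endomorphism, $\ker f$ is finite; as $U$ is unipotent over a field of characteristic zero it has no nontrivial finite subgroup, so $f|_{U}$ is injective. Because $U=R_{u}(G)$, the image $f(U)$ is a normal unipotent subgroup, hence contained in $U$, and being connected of full dimension it equals $U$; thus $f|_{U}\in\Aut(U)$, and $f$ induces a surjective endomorphism $\bar f$ of $G/U$. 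Now $G/U$ is a torus, $T\hookrightarrow G\twoheadrightarrow G/U$ is an isomorphism, and the image under the surjection $\pi\colon G\to G/U$ of a Zariski‑dense set is Zariski‑dense, so $\bar f$ has a dense orbit on $G/U$, namely $O_{\bar f}(\pi(x))$.

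Next I would reduce to an endomorphism that actually preserves $T$. One cannot assume $f(T)=T$, but $f(T)$ is again a maximal torus of $G$, so by conjugacy of maximal tori in a connected solvable group (cf. \cite{bor}) there is $g_{0}\in G$ with $\Int(g_{0})(f(T))=T$. Put $f'=\Int(g_{0})\circ f$, so that $f'(T)=T$ and $f'(U)=U$. The crucial observation is that $\bar{f'}=\bar f$, since $G/U$ is abelian and hence inner automorphisms act trivially on it. Consequently, via the isomorphism $T\simeq G/U$, the endomorphism $f'|_{T}\colon T\to T$ is surjective and has a dense orbit.

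Now for the heart of the matter. Set $\mathfrak u:=\mathrm{Lie}(U)$ and decompose it into $T$‑weight spaces $\mathfrak u=\bigoplus_{\chi\in X^{*}(T)}\mathfrak u_{\chi}$ under the adjoint action, and let $\Psi=\{\chi\neq0:\mathfrak u_{\chi}\neq0\}$; since $\exp\colon\mathfrak u\to U$ is an isomorphism of varieties intertwining $\mathrm{Ad}$ with conjugation, $T$ acts trivially on $U$ precisely when $\Psi=\emptyset$, in which case $G=T\times U$. As $f'$ is a homomorphism with $f'(T)=T$ and $f'(U)=U$, its differential satisfies $df'\circ\mathrm{Ad}(t)=\mathrm{Ad}(f'(t))\circ df'$ on $\mathfrak u$ with $f'(t)\in T$, and comparing weight components gives $df'(\mathfrak u_{\chi})\subseteq\bigoplus_{\mu\circ f'|_{T}=\chi}\mathfrak u_{\mu}$. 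Since $df'|_{\mathfrak u}$ is bijective, this forces the injective pullback map $(f'|_{T})^{*}\colon X^{*}(T)\to X^{*}(T)$, $\mu\mapsto\mu\circ f'|_{T}$, to restrict to a permutation of the finite set $\Psi$. If $\Psi$ were nonempty, some nonzero $\chi\in\Psi$ would be periodic, say $\chi\circ(f'|_{T})^{p}=\chi$; then for every $y\in T(\QQ)$ the values $\chi\bigl((f'|_{T})^{n}(y)\bigr)$ would run through a finite set, confining the $f'|_{T}$‑orbit of $y$ to a finite union of proper closed subsets $\chi^{-1}(c)\subset T$ --- contradicting the density established above. Hence $\Psi=\emptyset$ and $G=T\times U$. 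In $T\times U$ the torus $T$ is the unique maximal torus, since any torus maps trivially to the unipotent group $U$; therefore $f(T)=T$ and $f(U)=U$, and writing elements as $tu$ with $t\in T$, $u\in U$ we get $f(tu)=f(t)f(u)$ with $f(t)\in T$, $f(u)\in U$, i.e.\ $f=f|_{T}\times f|_{U}$.

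The one delicate point is the reduction in the second paragraph: $f$ genuinely need not preserve the given $T$, and one must see that replacing it by a suitable conjugate $f'$ preserving $T$ does not affect the dense‑orbit hypothesis; this works exactly because $G/U$ is abelian, so passing to a conjugate does not change the induced map on $G/U$. Once that is set up, the rest is a routine weight computation combined with the elementary fact that a surjective torus endomorphism with a Zariski‑dense orbit can have no nonzero periodic character.
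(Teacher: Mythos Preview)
Your proof is correct and follows the same overall scaffolding as the paper: reduce to an endomorphism $f'=\Int(g_0)\circ f$ preserving $T$ (using that this does not change the induced map on the abelian quotient $G/U$, so the dense orbit on $G/U\simeq T$ persists), and then argue that the $T$-action on $U$ must be trivial. The difference lies in how that last step is carried out. The paper works at the group level: it considers the conjugation map $\chi\colon T\to\Aut U$, observes that $f_U$ normalizes the torus $T'=\chi(T)$ inside the linear algebraic group $\Aut U$, and invokes the fact that $N_{\Aut U}(T')/Z_{\Aut U}(T')$ is finite to force $(\Int f_U)^n|_{T'}=\id$; combined with the dense $f_T$-orbit this gives $T'=\{1\}$. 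You instead work on the Lie algebra: the finite set $\Psi$ of nonzero $T$-weights on $\mathfrak u$ is permuted by $(f'|_T)^*$, so any $\chi\in\Psi$ is periodic, and the dense orbit on $T$ rules this out directly. Your route is a bit more self-contained, since it avoids the Hochschild--Mostow fact that $\Aut U$ is a linear algebraic group and the rigidity statement for tori, using only the trivial finiteness of $\Psi$; the paper's route has the virtue of staying purely group-theoretic. Both arguments ultimately collapse to the same elementary punchline: a surjective torus endomorphism with a Zariski-dense orbit admits no nonzero periodic character.
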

\begin{proof}
Since $f$ is a group homomorphism, it induces $f_{U} \colon U \longrightarrow U$.
Thus, $f$ induces $f_{G/U} \colon G/U \longrightarrow G/U \simeq T$.
Since $f$ has a Zariski dense orbit, $f_{G/U}$ also has a Zariski dense orbit.
Take $g\in G$ such that $gf(T)g^{-1}=T$ and set $f'=\Int g \circ f$.
Then $f'$ preserves $T$ and $U$.
Moreover, since $f'$ and $f$ induces the same homomorphism on $G/U$ which we write $f'_{G/U}$,
it has a Zariski dense orbit.
Once we can prove $G=T \times U$, it is clear that $f$ is of the form $f|_{T} \times f|_{U}$.
Therefore, by replacing $f$ with $f'$, we may assume $f$ preserves $T$.

Now, $T$ acts on $U$ by conjugation and it corresponds to a group homomorphism $ \chi \colon T \longrightarrow \Aut U$.
We can check that the following diagram commutes:
\[
\xymatrix{
T \ar[r]^{f_{T}} \ar[d]_{ \chi} & T \ar[d]^{ \chi}\\
\Aut U \ar[r]_{\Int f_{U}}& \Aut U.
}
\]
Here $f_{T}$ and $f_{U}$ are endomorphism on $T$ and $U$ induced by $f$.
Note that since $U$ is connected unipotent, $f_{U}$ is an automorphism and $\Aut U$ is a linear algebraic group.
Let $T'= \chi(T)$.
Then $f_{U}$ is contained in the normalizer $N_{\Aut U}(T')$ of  $T'$.
Since $N_{\Aut U}(T')/Z_{\Aut U}(T')$ is finite (where $Z_{\Aut U}(T')$ is the centralizer) \cite[8.10 Corollary 2]{bor}, 
there exists a positive integer $n$ such that $(\Int f_{U})^{n}=\id$ on $T'$. 
Since $f_{T}$ has a Zariski dense orbit, $T'$ must be trivial and that means $G=T \times U$.
\end{proof}

\subsection{Proof of Theorem \ref{main: lin alg grp}}
Now we prove Theorem \ref{main: lin alg grp}.
In this section, the ground field is $ \overline{\mathbb Q}$. 

{\bf Convention}:
Let $f \colon X \longrightarrow X$ be a morphism where $X$ is a smooth quasi-projective variety defined over
$ \overline{\mathbb Q}$.
We say weak KSC is true for $f$ when the following holds:
for every point $x\in X( \overline{\mathbb Q})$, if the $f$-orbit of $x$ is Zariski dense in $X$,
then $ \alpha_{f}(x)$ exists and is equal to $\delta_{f}$.

\begin{prop}\label{red to solv grp}
If weak KSC is true for all surjective group endomorphisms of all connected solvable groups,
then weak KSC is true for all surjective group endomorphisms of connected linear algebraic groups.
\end{prop}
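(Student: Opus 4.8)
The plan is to push $f$ down to the largest torus quotient of $G$ — which is a solvable group, so the hypothesis applies there — and to pull the conclusion back. First I would introduce the unipotent radical $R_uG$, the reductive quotient $L=G/R_uG$, its semisimple derived subgroup $[L,L]$, the preimage $N\subseteq G$ of $[L,L]$, and the torus $S:=G/N\cong L/[L,L]$. Since $R_uG$ and $[L,L]$ are characteristic subgroups, $N$ is stable under every surjective group endomorphism; hence $f$ induces a surjective group endomorphism $\overline f\colon S\longrightarrow S$ with $q\circ f=\overline f\circ q$, where $q\colon G\longrightarrow S$ is the quotient. Because $q$ is dominant, any $x\in G(\QQ)$ with Zariski dense $f$-orbit has $q(x)$ with Zariski dense $\overline f$-orbit in $S$, and the hypothesis (applied to the connected solvable group $S$) gives that $\alpha_{\overline f}(q(x))$ exists and equals $\delta_{\overline f}$.

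The crux is the equality $\delta_f=\delta_{\overline f}$. One way: Theorem \ref{dynamical deg of endom of alg group} gives $\delta_f=\delta_{(\Int(g)\circ f)|_T}$ for a maximal torus $T$; a maximal torus of $G$ maps isomorphically onto one of $L$, and the induced endomorphism of $X_*(T)$ stabilizes the coroot sublattice of $L$ — on which it acts through an automorphism of $[L,L]$, hence with finite order — while on $X_*(T)$ modulo coroots, i.e. on $X_*(S)$, it is $\overline f_*$; comparing spectral radii yields $\delta_f=\delta_{\overline f}$. Alternatively: the fibres of $q$ are cosets of $N$, and $N$ is an extension of the semisimple group $[L,L]$ by the unipotent group $R_uG$; the base field has characteristic zero, so $f$ restricts to an automorphism of $R_uG$ and induces an automorphism of $[L,L]$, and automorphisms of unipotent and of semisimple groups have dynamical degree $1$; hence the relative dynamical degree of $f$ along $q$ is $1$, so $\delta_f=\max\{\delta_{\overline f},1\}=\delta_{\overline f}$.

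Granting this, the rest is routine. I would fix projective models so that $q$ extends to a morphism $\overline G\longrightarrow\overline S$ of smooth projective varieties and pick ample divisors $H$ on $\overline S$ and $A$ on $\overline G$, with $h_A\geq 1$. Then $q^*H$ is nef, so $h_H\circ q\leq C\,h_A+O(1)$ on $G(\QQ)$; evaluating along the orbit, $h_H(\overline f^{\,n}(q(x)))=h_H(q(f^n(x)))\leq C\,h_A(f^n(x))+O(1)$, and after $n$-th roots and lower limits this gives $\delta_f=\underline{\alpha}_{\overline f}(q(x))\leq\underline{\alpha}_f(x)$. Since $\overline{\alpha}_f(x)\leq\delta_f$ always, we get $\underline{\alpha}_f(x)=\overline{\alpha}_f(x)=\delta_f$, i.e. $\alpha_f(x)$ exists and equals $\delta_f$, which is exactly weak KSC for $f$.

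The main obstacle is the identity $\delta_f=\delta_{\overline f}$: it is the statement that the entire dynamical complexity of a group endomorphism of a linear algebraic group lives on its maximal torus quotient, and it relies on the characteristic-zero rigidity of endomorphisms of semisimple groups, on the vanishing of dynamical degrees of automorphisms of semisimple and unipotent groups, and — depending on the route — on Theorem \ref{dynamical deg of endom of alg group} or on the product formula for dynamical degrees along a fibration. I would also remark that one could instead try to reduce through a Borel subgroup $B=T\ltimes U$, using Theorem \ref{dynamical deg of endom of alg group} and the big-cell decomposition of the reductive part to split off a maximal unipotent factor and land on the solvable group $B$; the price is that one must then check that replacing $f$ by $\Int(g)\circ f$ does not affect weak KSC, since this replacement is not compatible with the quotient $G\to G/B$, whereas the torus-quotient argument above never perturbs $f$.
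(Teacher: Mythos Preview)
Your proof is correct and takes a genuinely different route from the paper's. The paper first quotients by the unipotent radical to reduce to the reductive case, then invokes Steinberg's theorem to produce a Borel subgroup $B=T\ltimes U$ with $f(B)=B$ (so no conjugation is needed), and finally uses the big-cell product $U^{-}\times B\hookrightarrow G$ to split both the dynamical and the arithmetic degrees as a maximum over the two factors; since $\delta_{f|_{U^{-}}}=1$, everything reduces to $f|_{B}$ and the solvable hypothesis applies to $B$. Your approach instead passes in one step to the maximal torus quotient $S=G/N$, proves $\delta_{f}=\delta_{\overline f}$ by showing (via Theorem~\ref{dynamical deg of endom of alg group}) that the induced action on $X_{*}(T)_{\Q}$ has finite order on the coroot subspace, and then pulls the arithmetic degree back through the surjection $q$ by a height comparison.

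What each buys: your argument is more conceptual---it isolates the slogan ``all the dynamical complexity sits on the torus quotient'' and avoids Steinberg's theorem entirely---but it requires the extra input that surjective endomorphisms of semisimple groups in characteristic zero are automorphisms and hence act with finite order on the (co)root lattice. The paper's argument trades this for Steinberg's theorem, after which the big-cell product structure makes both $\delta_{f}=\delta_{f|_{B}}$ and $\alpha_{f}(x)=\alpha_{f|_{B}}(z)$ immediate, with no separate height-pullback step. Your closing remark about the Borel route is apt, and it is precisely Steinberg's theorem that lets the paper sidestep the $\Int(g)$ complication you flag: one gets an honestly $f$-stable Borel, so $f$ is never perturbed.
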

\begin{proof}
Let $f \colon G \longrightarrow G$ be a surjective group endomorphism of a linear algebraic group $G$.
Let $R_{u}G$ be the unipotent radical.
Then $f$ induces an endomorphism $ \overline{f} \colon G/R_{u}G \longrightarrow G/R_{u}G$.
Since any maximal torus of $G$ is isomorphic to a maximal torus of $G/R_{u}G$, 
Theorem \ref{dynamical deg of endom of alg group} implies $\delta_{f}=\delta_{ \overline{f}}$.
Thus, we may assume $G$ is reductive.
(Note that $ \underline{\alpha}_{f}(x)\geq \underline{\alpha}_{ \overline{f}}(xR_{u}G)$ for all $x\in G$.)

By \cite[Theorem 7.2]{stein}, there exists a Borel subgroup $B=T\ltimes U \subset G$ such that $f(B)=B$
where $T$ is a maximal torus and $U$ is the maximal unipotent subgroup.
Let $U^{-}$ be the opposite unipotent group of $U$.
Then we get
\[
\xymatrix@C=46pt{
U^{-}\times B \ar[d] \ar[r]^{f|_{U^{-}}\times f|_{B}} & U^{-}\times B\ar[d] \\
G \ar[r]_{f} & G
}
\] 
where the vertical arrows are open immersions.
Let $x\in G(\QQ)$ be a point with Zariski dense $f$-orbit.
By replacing $x$ by $f^{k}(x)$ for some $k$, we may assume $x\in (U^{-}\times B)(\QQ)$.
Write $x=(y,z)\in U^{-}(\QQ)\times B(\QQ)$.
Since $\delta_{f|_{U^{-}}}=1$, we  have $ \alpha_{f|_{U^{-}}}(y)=1$.
Since $x$ has Zariski dense $f$-orbit, $z$ has Zariski dense $f|_{B}$-orbit.
Thus, we get $ \alpha_{f}(x)=\max\{ \alpha_{f|_{U^{-}}}(y), \alpha_{f|_{B}}(z)\}= \alpha_{f|_{B}}(z)=\delta_{f|_{B}}$.
Here, we use the assumption in the last equality (and the existence of $ \alpha_{f|_{B}}(z)$).

\end{proof}

\begin{lem}\label{ksc for solv grp}
Let $G=T \ltimes U$ be a connected solvable group where $T$ is a maximal torus and $U$ is the maximal unipotent subgroup.
Let $f \colon G \longrightarrow G$ be a surjective group endomorphism.
If $f(T)=T$, then weak KSC is true for $f$.
\end{lem}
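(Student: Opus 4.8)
The plan is to exploit the product structure of the solvable group $G$ and reduce the statement to Silverman's theorem on the Kawaguchi-Silverman conjecture for tori. If $f$ admits no Zariski dense orbit the assertion is vacuous, so let $x\in G(\QQ)$ be a point with Zariski dense $f$-orbit. Since $U$ is the unipotent radical of $G$ we have $f(U)\subseteq U$, and together with the hypothesis $f(T)=T$ this shows that, under the isomorphism of varieties $G\cong T\times U$ given by multiplication $(t,u)\mapsto tu$, the endomorphism $f$ is carried to the product self-map $f|_{T}\times f|_{U}$ of $T\times U$, both factors being surjective (this can also be read off from Proposition \ref{split of endom of solv grp}, which in addition makes $T\times U$ a direct product of groups). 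As the ground field is algebraically closed, $T$ is split, $T\cong\mathbb{G}_{m}^{k}$, while a connected unipotent group in characteristic zero is isomorphic, as a variety, to affine space $\A^{n}$. I would therefore fix the smooth projective compactification $\overline{G}=(\P^{1})^{k}\times\P^{n}$ containing $G=T\times U$ as an open subset, so that $f$ extends to the product rational self-map $\overline{f|_{T}}\times\overline{f|_{U}}$, and compute arithmetic degrees with respect to this model.

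Next I would separate the two coordinate directions. Applying Theorem \ref{dynamical deg of endom of alg group} to the solvable group $G$, whose unique Borel subgroup is $G$ itself (so that one may take $g=1$, since $f(B)=B$ and $f(T)=T$), gives $\delta_{f}=\delta_{f|_{T}}$; moreover $\delta_{f|_{U}}=1$, because $f|_{U}$ is a surjective endomorphism of a connected unipotent group (cf.\ the proof of Theorem \ref{dynamical deg of endom of alg group}). Choosing an ample divisor $A_{T}$ on $(\P^{1})^{k}$ and a hyperplane $A_{U}$ on $\P^{n}$, with associated Weil height functions $h_{A_{T}},h_{A_{U}}\geq0$, the divisor $H:=\pr_{1}^{*}A_{T}+\pr_{2}^{*}A_{U}$ is ample on $\overline{G}$ and satisfies $h_{H}=h_{A_{T}}\circ\pr_{1}+h_{A_{U}}\circ\pr_{2}+O(1)$. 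Writing $x=(t_{0},u_{0})$, $t_{m}=f|_{T}^{m}(t_{0})$ and $u_{m}=f|_{U}^{m}(u_{0})$, this becomes $h_{H}(f^{m}(x))=h_{A_{T}}(t_{m})+h_{A_{U}}(u_{m})+O(1)$.

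Then I would estimate the two summands. Since the $f$-orbit of $x$ is Zariski dense in $T\times U$, its image under the dominant projection $\pr_{1}$ (namely the $f|_{T}$-orbit of $t_{0}$) is Zariski dense in $T$, so Silverman's theorem for tori (\cite{sil}) shows that $\alpha_{f|_{T}}(t_{0})$ exists and equals $\delta_{f|_{T}}=\delta_{f}$, that is, $\max\{1,h_{A_{T}}(t_{m})\}^{1/m}\to\delta_{f}$. For the other summand, $f|_{U}$ is a surjective self-morphism of the smooth quasi-projective variety $U$, so the fundamental inequality $\overline{\alpha}\leq\delta$ (\cite{ks3}, \cite{ma}) gives $\overline{\alpha}_{f|_{U}}(u_{0})\leq\delta_{f|_{U}}=1$, whence $\max\{1,h_{A_{U}}(u_{m})\}^{1/m}\to1$.

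Finally I would combine the estimates. If $\delta_{f}=1$, then $1\leq\underline{\alpha}_{f}(x)\leq\overline{\alpha}_{f}(x)\leq\delta_{f}=1$ (the last inequality again being the fundamental inequality, now for $f$ on $G$), so $\alpha_{f}(x)=1=\delta_{f}$. If $\delta_{f}>1$, then $h_{A_{T}}(t_{m})\to\infty$, and from $h_{A_{U}}(u_{m})\geq0$ and the identity above we get $h_{H}(f^{m}(x))\geq h_{A_{T}}(t_{m})-C$ for a constant $C$; hence $h_{H}(f^{m}(x))\geq\tfrac{1}{2}h_{A_{T}}(t_{m})$ for all sufficiently large $m$, which yields $\underline{\alpha}_{f}(x)\geq\delta_{f}$, and combined with $\overline{\alpha}_{f}(x)\leq\delta_{f}$ this gives $\alpha_{f}(x)=\delta_{f}$. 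The only genuinely hard ingredient is Silverman's resolution of the torus case, which I am quoting; the step that calls for care is this last combination, where one must be sure that the slowly growing unipotent coordinate $h_{A_{U}}(u_{m})$ does not perturb the limit dictated by the torus coordinate. This is precisely the role of $\delta_{f|_{U}}=1$, which confines growth in the $U$-direction to be at most polynomial and hence invisible after extracting $m$-th roots.
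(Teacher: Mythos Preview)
Your proof is correct and follows essentially the same approach as the paper: decompose $f$ as $f|_{T}\times f|_{U}$ under the variety isomorphism $T\times U\cong G$, invoke Silverman's result for the torus factor, use $\delta_{f|_{U}}=1$ for the unipotent factor, and conclude via Theorem~\ref{dynamical deg of endom of alg group} that $\delta_{f}=\delta_{f|_{T}}$. The paper's version is terser, writing directly $\alpha_{f}(x,y)=\max\{\alpha_{f|_{T}}(x),\alpha_{f|_{U}}(y)\}$ without unpacking the height decomposition, whereas you spell this out by fixing the explicit compactification $(\P^{1})^{k}\times\P^{n}$ and doing the case split $\delta_{f}=1$ versus $\delta_{f}>1$; but the substance is the same.
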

\begin{proof}
Note that $f=f|_{T}\times f|_{U}$.
If a point $(x,y)\in T\ltimes U=G$ has Zariski dense $f$-orbit, then $x$ also has Zariski dense $f|_{T}$-orbit.
By KSC for group endomorphisms on algebraic tori \cite{sil}, 
we have $ \alpha_{f}(x,y)=\max\{ \alpha_{f|_{T}}(x), \alpha_{f|_{U}}(y)\}= \alpha_{f|_{T}}(x)=\delta_{f|_{T}}=\delta_{f}$. 
(In the last equality, we use Theorem \ref{dynamical deg of endom of alg group}.)
\end{proof}

\begin{proof}[Proof of Theorem \ref{main: lin alg grp}]
The theorem follows from
Proposition \ref{red to solv grp}, \ref{split of endom of solv grp}, and Lemma \ref{ksc for solv grp}.
\end{proof}


\begin{thebibliography}{99}

\bibitem{bau} Bauer, T.,
	\textit{On the cone of curves of an abelian variety},
	Amer. J. Math. 120 (1998), no. 5, 997--1006. 

\bibitem{bg} Bombieri, E., Gubler, W.,
	\textit{Heights in Diophantine geometry},
	Cambridge university press, 2007.

\bibitem{bor} Borel, A.,
	\textit{Linear algebraic groups, second enlarged edition},
	Springer-Verlag, New York.

\bibitem{birkh} Birkhoff, G.,
	\textit{Linear transformations with invariant cones},
	Amer. Math. Monthly 74 (1967), 274--276.
	
\bibitem{cmz} Cascini, P.,  Meng, S., Zhang, D-Q.,
	\textit{Polarized endomorphisms of normal projective threefolds in arbitrary characteristic},
	arXiv:1710.01903v2.	

\bibitem{cox} Cox, D., Little, J., Schenck, H.,
	\textit{Toric Varieties},
	Graduate Studies in Mathematics, Volume: 124; 2011.
		

\bibitem{dang} Dang, N-B.,
	\textit{Degrees of iterates of rational maps on normal projective varieties},
	arXiv:1701.07760.

\bibitem{df} Diller, J., Favre, C.,
	\textit{Dynamics of bimeromorphic maps of surfaces},
	 Amer.\ J. Math.\ {\bf 123} no.\ 6, 1135--1169, (2001).


\bibitem{dn} Dinh, T-C., Nguy\^{e}n, V-A.,
	\textit{Comparison of dynamical degrees for semi-conjugate meromorphic maps},
	Comment. Math. Helv. \textbf{86} (2011), no. 4, 817--840.
	
\bibitem{ds} Dinh, T-C., Sibony, N.,
	\textit{Equidistribution problems in complex dynamics of higher dimension},
	Internat. J. Math. \textbf{28}, 1750057 (2007).

\bibitem{dol} Dolgachev, I.,
	\textit{Lectures on invariant theory},
	London Mathematical Society Lecture Note Series, 296. Cambridge University Press, Cambridge, 2003. xvi+220 pp.
	

\bibitem{ful} Fulton, W.,
	\textit{Intersection theory},
	Second edition. Springer-Verlag, Berlin, 1998. xiv+470 pp.

\bibitem{har} Hartshorne, R.,
	\textit{Algebraic geometry}, Grad. Texts in Math. {\bf 52}, Springer-Verlag, New York 1977.


\bibitem{hocmos}  Hochschild, G., Mostow, G. D.
	\textit{Automorphisms of affine algebraic groups},
	J. Algebra 13 1969 535--543. 

\bibitem{hs} Hindry, M., Silverman, J. H., 
	\textit{Diophantine geometry. An introduction},
	Graduate Text in Mathematics, no.\ 20,
	Springer-Verlag, New York, 2000.

	

\bibitem{ks1} Kawaguchi, S., Silverman, J.\ H.,
	\textit{Dynamical canonical heights for Jordan blocks, arithmetic degrees of orbits, and nef canonical heights on abelian varieties},
	Trans.\ Amer.\ Math.\ Soc. \textbf{368} (2016), 5009--5035.


\bibitem{ks3} Kawaguchi, S., Silverman, J.\ H.,
	\textit{On the dynamical and arithmetic degrees of rational self-maps of algebraic varieties},  
	J. Reine Angew. Math. \textbf{713} (2016), 21--48. 

\bibitem{kol} Koll\'ar, J.,
	\textit{Lectures on resolution of singularities},
	Annals of Mathematics Studies, 166. Princeton University Press, Princeton, NJ, 2007. vi+208 pp. 

\bibitem{kolmor} Koll\'ar, J., Mori, S.,
	\textit{Birational geometry of algebraic varieties}, 1998, Cambridge Univ. Press.

\bibitem{Lan}
Lang, S., 
	\textit{Fundamentals of Diophantine Geometry}, 
	Springer-Verlag, New York, 1983. 

\bibitem{laz} Lazarsfeld, R. 
	\textit{Positivity in algebraic geometry I}, Springer-Verlag, 2004.

\bibitem{ls} Lesieutre, J.,  Satriano, M.,
	\textit{Canonical heights on hyper-K\"{a}hler varieties and the Kawaguchi-Silverman conjecture},
	preprint, 2018, arXiv:1802.07388.

\bibitem{lin} Lin, J-L.,
	\textit{On the arithmetic dynamics of monomial maps}, arXiv:1704.02661.

\bibitem{ma} Matsuzawa, Y.,
	\textit{On upper bounds of arithmetic degrees}, 
	to appear in Amer. J. Math.


\bibitem{ms}  Matsuzawa, Y., Sano, K.,
	\textit{Arithmetic and dynamical degrees of self-morphisms of semi-abelian varieties},
	to appear in  Ergodic Theory Dynam.\ Systems.

\bibitem{mss} Matsuzawa, Y.,  Sano, K., Shibata, T., 
	\textit{Arithmetic degrees and dynamical degrees of endomorphisms on surfaces}, 
	Algebra Number Theory 12 (2018), no. 7, 1635--1657.


\bibitem{mayo} Matsuzawa, Y., Yoshikawa, S.,
	\textit{Int-amplified endomorphisms on normal projective surfaces},
	preprint.

\bibitem{meng} Meng, S.,
	\textit{Building blocks of amplified endomorphisms of normal projective varieties},
	arXiv:1712.08995.
	
\bibitem{meng-zhang} Meng, S. and Zhang, D.-Q.,
	\textit{Building blocks of polarized endomorphisms of normal projective varieties},
	Adv. Math. 325 (2018), 243--273.
		

\bibitem{meng-zhang2} Meng, S. and Zhang, D.-Q.,
	\textit{Semi-group structure of all endomorphisms of a projective variety admitting a polarized endomorphism},
	arXiv:1806.05828.


\bibitem{mum} Mumford, D., 
	\textit{Abelian varieties},
	Tata Institute of Fundamental Research Studies in Mathematics, No. 5. Published for the Tata Institute of Fundamental Research, Bombay, 1970.

\bibitem{po} Poonen, B.,
	\textit{Mordell-Lang plus Bogomolov},
	Invent. Math. {\bf 137} (1999), no. 2, 413--425.

\bibitem{sano} Sano, K.,
	\textit{The canonical heights for Jordan blocks of small eigenvalues, preperiodic points, and the arithmetic degrees},
	arXiv:1712.07533.

\bibitem{ser} Serre, J-P., 
	\textit{Lectures on the Mordell-Weil theorem},
	 Aspects of Mathematics E15, Vieweg, Braunschweig, 1989.

\bibitem{shi} Shibata, T.,
	\textit{Ample canonical heights for endomorphisms on projective varieties},
	J. Math. Soc. Japan 71 (2019), no. 2, 599--634.
	
	
\bibitem{shok} Shokurov, V. V.,
	\textit{3-fold log models},
	Algebraic geometry, 4. 
	J. Math. Sci. 81 (1996), no. 3, 2667--2699.	
	

\bibitem{sil} Silverman, J.\ H.,
	\textit{Dynamical degree, arithmetic entropy, and canonical heights for dominant rational self-maps of projective space},
	Ergodic Theory Dynam.\ Systems \textbf{34} (2014), no.\ 2, 647--678.

\bibitem{sil2} Silverman, J.\ H.,
	\textit{Arithmetic and dynamical degrees on abelian varieties},
	J. Th\'eor. Nombres Bprdeaux {\bf 29} (2017), no. 1, 151--167.

\bibitem{stein} Steinberg, R.,
	\textit{Endomorphisms of linear algebraic groups},
	Memoirs of the American Mathematical Society, no. 80, 1968.


\bibitem{tru0} Truong, T. T.,
	\textit{(Relative) dynamical degrees of rational maps over an algebraic closed field},
	arXiv:1501.01523v1.
	

\bibitem{vo} Vojta, P.,
	\textit{Integral points on subvarieties of semiabelian varieties, I},
	Invent. Math. {\bf 126} (1996), no. 1, 133--181.
\end{thebibliography}
\end{document}